\newcommand{\R}{\mathds R}
\newcommand{\indM}{\mbox{$\mathfrak i_{\mathrm{Morse}}(M)$}}
\newcommand{\indS}{\mbox{$\mathfrak i_{\mathrm{Morse}}^\mathrm{s}(M)$}}
\newcommand{\indW}{\mbox{$\mathfrak i_{\mathrm{Morse}}^\mathrm{w}(M)$}}
\numberwithin{equation}{section}
\title[Bifurcation of CMC tori in spheres]{Bifurcation of constant mean curvature tori\\ in Euclidean spheres}
\author[L. J. Al\'\i as]{ Luis J. Al\'\i as}
\address{Departamento de Matem\'{a}ticas, \hfill\break\indent
Universidad de Murcia, Campus de Espinardo\hfill\break\indent 30100 Espinardo,
Murcia, \hfill\break\indent Spain} \email{ljalias@um.es}
\author[P.\ Piccione]{Paolo Piccione}
\address{Departamento de Matemática,\hfill\break\indent
Universidade de S\~ao Paulo, \hfill\break\indent Rua do Mat\~ao
1010,\hfill\break\indent CEP 05508-900, S\~ao Paulo, SP, Brazil}
\email{piccione.p@gmail.com}
\curraddr{Departamento de Matem\'{a}ticas, \hfill\break\indent
Universidad de Murcia, Campus de Espinardo\hfill\break\indent
30100 Espinardo, Murcia, \hfill\break\indent Spain}
\subjclass[2000]{58J55, 35B32, 53C42}
\thanks{This work was partially supported by MEC project MTM2009-10418, and Fundaci\'{o}n S\'{e}neca
project 04540/GERM/06, Spain. This research is a result of the activity developed
within the framework of the Programme in Support of Excellence Groups of the
Regi\'{o}n de Murcia, Spain, by Fundaci\'{o}n S\'{e}neca, Regional Agency for
Science and Technology (Regional Plan for Science and Technology 2007-2010).}
\thanks{The second author is partially sponsored by Capes (Brazil), Grant BEX 1509-08-0, and Fundaci\'{o}n
S\'{e}neca grant 09708/IV2/08, Spain.}
\begin{document}


\theoremstyle{plain}\newtheorem*{teon}{Theorem}
\theoremstyle{definition}\newtheorem*{defin*}{Definition}
\theoremstyle{plain}\newtheorem{teo}{Theorem}[section]
\theoremstyle{plain}\newtheorem{prop}[teo]{Proposition}
\theoremstyle{plain}\newtheorem{lem}[teo]{Lemma}
\theoremstyle{plain}\newtheorem{cor}[teo]{Corollary}
\theoremstyle{definition}\newtheorem{defin}[teo]{Definition}
\theoremstyle{remark}\newtheorem{rem}[teo]{Remark}
\theoremstyle{plain} \newtheorem{assum}[teo]{Assumption}
\swapnumbers
\theoremstyle{definition}\newtheorem{example}{Example}[section]
\theoremstyle{plain} \newtheorem*{acknowledgement}{Acknowledgements}
\theoremstyle{definition}\newtheorem*{notation}{Notation}


\begin{abstract}
We use equivariant bifurcation theory to show the existence of infinite sequences
isometric embeddings of tori with constant mean curvature (CMC) in Euclidean spheres
that are not isometrically congruent to the CMC Clifford tori, and accumulating at
some CMC Clifford torus.
\end{abstract}

\maketitle
\tableofcontents

\begin{section}{Introduction}
\subsection{Index and stability of CMC hypersurfaces}
As is well known, minimal hypersurfaces in a Riemannian manifold $N$ are critical points
of the variational problem of minimizing area. Similarly, hypersurfaces with constant mean
curvature (CMC) in $N$ are also solutions to the same variational problem, when restricted
to volume-preserving variations. For such a critical point $M$, the stability equation of
the corresponding variational problem is given by the second variation of the area functional,
which can be written as the quadratic form $Q(f)=\int_M(Jf)\cdot f\;\mathrm{vol}_M$ acting on the space of functions
on $M$,
with $J=-\Delta-m\mathrm{Ric}_N(\vec{n})-\|S\|^2$.
Here $\Delta$ is the Laplacian on functions on $M$ relative to the induced metric,
$m=\mathrm{dim}(M)$, $\mathrm{Ric}_N(\vec{n})$ is the Ricci curvature of $N$ evaluated on
the unit normal field $\vec{n}$ and $S$ is the second fundamental form of the hypersurface. The operator $J$
is called the Jacobi or stability operator of the hypersurface.

In the case of minimal hypersurfaces, the index of a minimal hypersurface $M$, denoted by \indM, is
defined as the maximum dimension of any subspace on which $Q$ is negative definite. Equivalently, \indM\ is the number of negative eigenvalues of $J$
(counted with multiplicity), which is necessarily finite. Intuitively, \indM\ measures the number of independent directions
in which the hypersurface fails to minimize area. To see it, observe that if $Q(f)<0$ for a function $f$, then the second derivative
of the area functional in the normal variation of $M$ induced by $f$ is negative and therefore
$\mathrm{Area}(M)>\mathrm{Area}(M_t)$ for small values of $t$. That means that the minimal hypersurface $M$,
while a critical point of the area functional, is not a local minimum. For minimal hypersurfaces in the
Euclidean sphere $\mathbb{S}^{m+1}$ this is always the case, and $\indM\geq 1$ always.

A spectral analysis of the Jacobi differential operator of a given minimal/CMC submanifold
provides many information about the displacement of nearby minimal/CMC submanifolds.
In \cite{Si}, Simons characterized the totally geodesic equators $\mathbb{S}^{m}\subset\mathbb{S}^{m+1}$ as the
only compact minimal hypersurfaces in $\mathbb{S}^{m+1}$ having $\indM=1$. Later,
Urbano \cite{Ur}, when $m=2$, and El Soufi \cite{ElS}, for general $m$, proved that if $M$
is not a totally geodesic equator, then not only must be $\indM>1$ but in fact it must hold $\indM\geq m+3$.
On the other hand, apart from the totally geodesic equators, the easiest minimal
hypersurfaces in $\mathbb{S}^{m+1}$ are the minimal Clifford tori, and they all have $\indM=m+3$.
For that reason, it has been conjectured for a long time that minimal Clifford tori are the only compact
minimal hypersurfaces in $\mathbb{S}^{m+1}$ with $\indM=m+3$.
In \cite{Ur}, Urbano showed that the conjecture
is true when $m=2$. Later on, Guadalupe, Brasil Jr.\ and Delgado \cite{GBD} showed that the
conjecture is true for every dimension $m$, under the additional hypothesis of constant scalar
curvature of $M$. More recently, Perdomo \cite{Pe1} proved that the conjecture is also true for
every dimension $m$ with an additional assumption about the symmetries of $M$, and, in particular,
the conjecture is true for minimal hypersurfaces with antipodal symmetry.

In contrast to the case of minimal hypersurfaces, in the case of
hypersurfaces with constant mean curvature
one can consider two different eigenvalue problems: the usual Dirichlet problem, associated with the quadratic form $Q$
acting on the whole space of functions on $M$, and the so called \textit{twisted} Dirichlet problem, associated
with the same quadratic form $Q$, but restricted to the subspace of functions $f$
satisfying the additional condition $\int_Mf\mathrm{vol}_M=0$.
Similarly, there are two different notions of index, the \textit{strong index}, denoted by \indS\ and associated
to the usual Dirichlet problem, and the \textit{weak index}, denoted
by \indW\ and associated to the twisted Dirichlet problem.
Specifically, the strong index is simply the maximum dimension of any
subspace of functions on which $Q$ is negative definite. On the other hand, the weak index is the maximum dimension of
any subspace of functions satisfying $\int_Mf\mathrm{vol}_M=0$ on which $Q$ is negative definite. Obviously, from a
geometric point of view the weak index is more natural than the strong index.

Barbosa, do Carmo and Eschenburg \cite{BardoCEsc} characterized the totally umbilical spheres
$\mathbb{S}^{m}(r)\subset\mathbb{S}^{m+1}$ as the only compact CMC hypersurfaces in the Euclidean
sphere having $\indW=0$ (i.e., being weakly stable). In \cite{LuisAldirOscar}, Al\'\i as, Brasil and Perdomo proved that
the weak index of any other compact CMC hypersurface $M$ in $\mathbb{S}^{m+1}$ which is not
totally umbilical and has constant scalar curvature is greater than or equal to $m+2$,
with equality if and only if $M$ is a CMC Clifford torus
$\mathbb{S}^{j}(r)\times\mathbb{S}^{m-j}(\sqrt{1-r^2})$ with radius
$\sqrt{j/(m+2)}\leq r\leq\sqrt{(j+2)/(m+2)}$. More recently, in \cite{LuisAldirOscar2} the same authors complemented that estimate by showing that
the weak index of any compact CMC hypersurface $M$ in $\mathbb{S}^{m+1}$ which is neither
totally umbilical nor a CMC Clifford torus and has constant scalar curvature is greater than or equal to $2m+4$.
At this respect, it is worth pointing out that the weak stability index of the CMC Clifford torus
$\mathbb{S}^{j}(r)\times\mathbb{S}^{m-j}(\sqrt{1-r^2})$ depends on $r$ reaching its minimum value $m+2$ when
$r\in[\sqrt{j/(m+2)},\sqrt{(j+2)/(m+2)}]$, and converging to $+\infty$ as $r$ converges either to 0 or 1 (see
\cite{LuisAldirOscar} and Section \ref{sec:Clifford} for the details).

In this paper we study a result of existence for CMC embeddings of the torus $\mathbb S^j\times\mathbb S^{m-j}$
into the sphere $\mathbb S^{m+1}$, for arbitrary $1\le j<m$. There is a very rich literature on
CMC embeddings of the 2-torus $\mathbb S^1\times\mathbb S^1$ into $\mathbb S^3$. Reference
\cite{KilSch} contains an extensive description of many recent results of CMC embeddings of cylinders and tori
in the $3$-sphere, as well as a comprehensive list of references.
Two important conjectures are discussed in \cite{KilSch}: the Lawson conjecture and the Pinkall and Sterling conjecture.
The Lawson conjecture states that the only embedded minimal $2$-tori in $\mathbb S^3$ are isometrically
congruent to the minimal Clifford torus. The Pinkall and Sterling conjecture states that the only
embedded CMC tori in the $3$-sphere are rotational.

A complete classification of rotationally invariant CMC tori in $\mathbb S^3$ has been given recently
in \cite{HyndParkMcCuan}. The authors show the existence of a two-parameter family of CMC surfaces with
special spherical symmetry in $\mathbb S^3$, divided into five types (spheres, Clifford tori\footnote{Standard tori, in the language
of \cite{HyndParkMcCuan}}, catenoid-type,
unduloid-type and nodoid-type), which gives a remarkable
analogy with the classical Delaunay classification of rotationally symmetric CMC surfaces in $\R^3$.
\subsection{Bifurcation of CMC Clifford tori}
Let us fix notations in order to give a precise statement of the result proved in this paper.
Given a differentiable manifold $M$ and a Riemannian manifold $(N,g)$, we say that two immersions
$x_1,x_2:M\to N$ of $M$ into $N$ are \emph{isometrically congruent} when there is an isometry $\psi$
of $(N,g)$ that carries the image of $x_1$ onto the image of $x_2$, i.e., if there exists a
diffeomorphism $\phi$ of $M$ and an isometry $\psi$ of $(N,g)$ such that
the following diagram commutes:
\[\xymatrix{%
M\ar[r]^{x_1}\ar[d]_{\phi}&N\ar[d]^{\psi}\\
M\ar[r]_{x_2}&N}\]

Given integers $m>j\ge1$ and a positive real number $r\in\left]0,1\right[$, we will denote by
$x_r^{m,j}:\mathbb S^j\times\mathbb S^{m-j}\longrightarrow\mathbb S^{m+1}$
the embedding:
\begin{equation}\label{eq:defClifford}
\phantom{,\qquad p\in\mathbb S^j,\ q\in\mathbb S^{m-j}}
x_r^{m,j}(p,q)=\Big(r\cdot p,\sqrt{1-r^2}\cdot q\Big),\qquad p\in\mathbb S^j,\ q\in\mathbb S^{m-j}.\end{equation}
These are well known to be embeddings with constant mean curvature (CMC), called in the literature the
constant mean curvature \emph{Clifford tori}. This paper is devoted to proving the following:
\begin{teon}
For fixed $j$, $m$, there exist two sequences $\big(r_i^{m,j}\big)_{i\ge3}$ and $\big(s^{m,j}_l\big)_{l\ge3}$ in $\left]0,1\right[$,
specifically
\[
r_i^{m,j}=\sqrt{\frac{(i-2)(j+i-1)}{m-j+(i-2)(j+i-1)}}
\]
and
\[
s_l^{m,j}=\sqrt{\frac{j}{j+(l-2)(m-j+l-1)}},
\]
with $\lim\limits_{i\to\infty}r^{m,j}_i=1$ and $\lim\limits_{l\to\infty}s^{m,j}_l=0$,
such that the Clifford tori $x^{m,j}_{r_i}$ and $x^{m,j}_{s_l}$ are accumulation of pairwise non congruent CMC embeddings of $\mathbb S^j\times\mathbb S^{m-j}$ into $\mathbb S^{m+1}$, each of which is not congruent to any CMC Clifford
torus.\smallskip

For all other values of $r\in\left]0,1\right[$, the
family of CMC Clifford tori $x_r^{m,j}$ is \emph{locally rigid}, in the sense that any CMC embedding of
$\mathbb S^j\times\mathbb S^{m-j}$ into $\mathbb S^{m+1}$ which is \emph{sufficiently close} to $x_r^{m,j}$
must be isometrically congruent to an embedding of the CMC Clifford family.
\end{teon}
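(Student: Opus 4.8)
The plan is to set this up as an equivariant bifurcation problem for the CMC equation along the real-analytic path $r\mapsto x_r^{m,j}$. First I would introduce the correct variational framework: CMC embeddings of a fixed manifold $M=\mathbb S^j\times\mathbb S^{m-j}$ into $\mathbb S^{m+1}$ are critical points of the area functional restricted to embeddings enclosing a fixed volume, and after fixing a tubular-neighborhood normal parametrization, each nearby embedding is described by a single function $f$ on $M$ (the normal displacement). The linearization at $x_r^{m,j}$ of the CMC equation, on the space of functions with $\int_M f\,\mathrm{vol}_M=0$, is governed by the Jacobi operator $J_r=-\Delta_r - m\,\mathrm{Ric}_{\mathbb S^{m+1}}(\vec n)-\|S_r\|^2$; since $\mathbb S^{m+1}$ is Einstein and $\|S_r\|^2$ is constant on a Clifford torus, $J_r$ has the form $-\Delta_r - c(r)$ for an explicit constant $c(r)$, and its spectrum is read off from the spectrum of the Laplacian on the product of round spheres of radii $r$ and $\sqrt{1-r^2}$. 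The eigenvalues of $\Delta_r$ are $\frac{k}{r^2}(k+j-1)+\frac{l}{1-r^2}(l+m-j-1)$ for $k,l\ge 0$, with the usual spherical-harmonic multiplicities; bifurcation can only occur at those $r$ where $J_r$ is degenerate on the mean-zero subspace, and a direct computation shows this happens exactly along the two announced sequences $r_i^{m,j}$ (from the modes $(k,l)=(i-2,0)$, coming from $\mathbb S^j$) and $s_l^{m,j}$ (from $(k,l)=(0,l-2)$, coming from $\mathbb S^{m-j}$), the remaining eigenvalues of the sequences $k\ge 1,l=0$ and $k=0,l\ge1$ being absorbed into constants/congruences.

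Next I would invoke an equivariant variational bifurcation theorem of the type available for smooth paths of critical orbits. The group $G=O(j+1)\times O(m-j+1)$ acts on everything, the path $x_r^{m,j}$ is a path of $G$-fixed (more precisely, of critical $G$-orbits), and the relevant sufficient condition for bifurcation is a jump of the \emph{equivariant Morse index} — equivalently, here, of the weak index \indW — of $J_r$ as $r$ crosses $r_i^{m,j}$ or $s_l^{m,j}$. The key step is therefore to show that at each $r_i^{m,j}$ and each $s_l^{m,j}$ the Morse index of $J_r$ on the mean-zero subspace actually \emph{changes}: this follows because the degenerate eigenvalue crossing zero does so transversally in $r$ (monotonicity of each eigenvalue branch $\lambda_{k,l}(r)+\;$constant$\;-c(r)$ in $r$, checked by differentiating the explicit formula), so the number of negative eigenvalues strictly increases by the multiplicity of that mode. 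One must also verify that these crossing values are pairwise distinct and that no \emph{other} eigenvalue branch vanishes simultaneously, so that the index jump is clean; this is where the specific arithmetic of the two sequences is used. The output of the equivariant bifurcation theorem is, at each such $r$, a bifurcating branch of CMC embeddings of $\mathbb S^j\times\mathbb S^{m-j}$ issuing from $x_r^{m,j}$ and not lying in the Clifford family; since these branches accumulate at $x_r^{m,j}$ and the two sequences themselves accumulate at $r=1$ and $r=0$, one extracts the claimed infinite sequences of pairwise non-congruent CMC embeddings.

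For the non-degeneracy (local rigidity) statement for the remaining values of $r$, the argument is the implicit function theorem: away from the two sequences, $J_r$ has trivial kernel on the mean-zero subspace (no eigenvalue branch vanishes there), so the CMC operator is a local diffeomorphism transverse to the $G$-action and to the $1$-parameter Clifford family; hence any CMC embedding sufficiently $C^{2,\alpha}$-close to $x_r^{m,j}$ is, after applying an isometry of $\mathbb S^{m+1}$, one of the $x_{r'}^{m,j}$. I expect the main obstacle to be the bookkeeping around constant and first-order spherical harmonics: the zero mode $k=l=0$ is excluded by the volume constraint, and the first spherical-harmonic modes on each factor correspond to infinitesimal rigid motions in $\mathbb S^{m+1}$ (the $G$-action and the ambient isometries), so one must carefully separate the genuinely degenerate directions from those coming from the symmetry group and from reparametrization, to be sure the index jump — and hence the bifurcation — is not an artifact of the non-compactness of the orbit. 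Handling this cleanly, and matching it with the precise indices $k=i-2$, $l=l-2$ appearing in the statement, is the technical heart of the proof.
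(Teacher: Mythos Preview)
Your overall plan---variational setup, normal-graph parametrization, spectral analysis of the Jacobi operator $J_r=-\Delta_r-c(r)$ via the product Laplacian, and an equivariant bifurcation theorem triggered by a jump of the Morse index---is exactly the strategy the paper follows. The difference, and the genuine gap in your proposal, is the choice of symmetry group.

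You take $G=\mathrm O(j{+}1)\times\mathrm O(m{-}j{+}1)$, the stabilizer of $x_r^{m,j}$, and describe the Clifford tori as $G$-fixed. With this choice the $G$-orbit of each $x_r^{m,j}$ is a single point, so its tangent space is $\{0\}$. But the kernel of $J_r$ is \emph{never} trivial: the eigenvalue
\[
\sigma_2+\rho_2-\Big(\tfrac{j}{r^2}+\tfrac{m-j}{1-r^2}\Big)=0
\]
holds for every $r$, and its eigenspace---the tensor products of first spherical harmonics on the two factors---has dimension $(j{+}1)(m{-}j{+}1)$. These directions are \emph{not} tangent to your $G$-orbit (which is a point), so in your framework every Clifford torus is a degenerate critical point. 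That wrecks both halves of the argument: the implicit function theorem for local rigidity needs nondegeneracy transverse to the orbit, and the Morse-theoretic bifurcation criterion needs the critical orbits to be nondegenerate on each side of the bifurcation value. You flag this at the end as ``the technical heart,'' but it is not merely bookkeeping; as stated, the setup does not go through.

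The paper resolves this by taking $G=\mathrm{SO}(m{+}2)$, the full (identity component of the) isometry group of $\mathbb S^{m+1}$, acting on the space of \emph{unparameterized} embeddings. Then the orbit of $[x_r^{m,j}]$ is the oriented Grassmannian $\mathrm{SO}(m{+}2)/\big(\mathrm{SO}(j{+}1)\times\mathrm{SO}(m{-}j{+}1)\big)$, of dimension $m{+}1+j(m{-}j)=(j{+}1)(m{-}j{+}1)$, and a short computation shows that its tangent space is \emph{exactly} the $(\sigma_2,\rho_2)$-eigenspace above. With this $G$, the orbit is nondegenerate precisely away from the two sequences $r_i^{m,j}$, $s_l^{m,j}$; the rigidity statement follows from an implicit-function/pseudo-critical-point argument (Proposition~\ref{thm:bifimpliesdegen}), and bifurcation follows from the jump of the strong Morse index (Theorem~\ref{thm:maincriticalorbitbifurcation}). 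Using the full isometry group is also what makes the conclusion ``not congruent to any Clifford torus'' automatic: the bifurcating branch leaves the $\mathrm{SO}(m{+}2)$-orbit of the family, which is precisely the congruence class.

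Two smaller points. First, your mode labeling is off by one: the degeneracy at $r_i^{m,j}$ comes from the mode $(k,l)=(i{-}1,0)$ in your indexing (paper's $(\sigma_i,\rho_1)$), not $(i{-}2,0)$; the latter, for $i=3$, is the first harmonic on $\mathbb S^j$ and gives a strictly negative eigenvalue for all $r$. Second, the ``infinitesimal rigid motion'' eigenspace is the \emph{product} of first harmonics on the two factors, not the first harmonics on each factor separately; the pure modes $(1,0)$ and $(0,1)$ contribute to the index but never to the kernel.
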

It must be observed that, in the case of the $3$-sphere $\mathbb S^3$ ($j=1$ and $m=2$ in the above statement),
our result can be obtained directly from the classification result in \cite{HyndParkMcCuan};
in this case, \cite{HyndParkMcCuan} provides an explicit description of the CMC tori in the bifurcating
branches, that are CMC embeddings of \emph{unduloid-type}.
\smallskip

The basic tool for proving our Theorem is variational bifurcation theory, which requires
a quite involved analytical and geometrical framework. A large part of the paper is devoted
to establishing the appropriate analytical and geometrical setup for applying bifurcation
and Morse theoretical techniques to the CMC variational problem. Let us describe in some
detail the technical issues that arise when one tries to give a formal proof of bifurcation
of CMC submanifolds.

The problem has a variational nature: CMC embeddings (in codimension one) are critical points
of the area functional defined in the space of embeddings that bound a region of fixed volume.
The set of trial maps for the variational problem should be a collection of embeddings of the torus
$M=\mathbb S^j\times\mathbb S^{m-j}$ into the sphere $N=\mathbb S^{m+1}$; in order to detect
solutions that are not isometrically congruent, one should take into consideration the action
of the diffeomorphism group of $M$, acting by right composition in the space of embeddings,
and the action of the isometry group of $N$, acting by left composition on the space of
embeddings. Note that the area and the volume functionals are invariant by the action of these
two groups. The action of the diffeomorphism group of $M$ on any set of embeddings of $M$
into $N$ is free, which suggests that one should consider a \emph{quotient} of the space
of embeddings by this action. This means that two embeddings $x,y:M\to N$ will be considered
equivalent if there exists a diffeomorphism $\phi:M\to M$ such that $y=x\circ\phi$.
As to the left action of the isometry group of $N$, this is not free; nevertheless, the group
is compact, and one can study a bifurcation problem for its critical orbits.
Thus, the variational problem has to be cast in a framework of \emph{equivariant bifurcation}
in a set of equivalence classes of embeddings of $M$ into $N$. One of the crucial issues
is the choice of which regularity has to be chosen in the set of embeddings.

In order to apply results from bifurcation theory, the set of trial maps for the variational
problem and the functionals involved must satisfy quite restrictive assumptions, more specifically,
a (local) Palais--Smale condition, and a Fredholm assumption on the second derivative. The latter is
needed for the computation of the local Morse theoretical invariants used to detect bifurcation:
roughly speaking, bifurcation occurs at an instant of a path of critical orbits when at this instant
there is a jump in the critical groups.

A purely Hilbert structure based on Sobolev spaces seems unfeasible for the type of variational
problem treated here. Namely, $H^1$-regularity is too loose when the dimension of the source
manifold $M$ is greater than one; weak $H^1$-solutions of the quasi-linear elliptic equation of
CMC embeddings may fail to be regular, and $H^1$-critical points of our variational problem
do not give in general CMC embeddings.
On the other hand, if one considers Sobolev regularity $H^k$, with $k>1$,
the second derivative of functionals of the type area/volume is never
Fredholm (but rather \emph{compact}).

The class of embeddings that seems most appropriate for developing a variational theory
of the area/volume functional is the set of embeddings of class $C^{2,\alpha}$, with $\alpha\in\left]0,1\right[$.
However, passing to such a Banach manifold setting has two annoying consequences. First, for the local
Palais--Smale condition, which is required in bifurcation theory, it is typically employed a Fredholm
assumption for the gradient of the function. Such assumption is satisfied when the second derivative
of the function is a Fredholm linear operator from the tangent space to its dual, but typically general Banach
spaces (and specifically $C^{2,\alpha}$) do not admit any Fredholm operator from the space to its dual.
This problem is solved here observing that the second variation of the area functional, which is given by a
second order linear elliptic operator, although it is not a Fredholm operator from $C^{2,\alpha}$ to its dual,
it is represented by a Fredholm operator from $C^{2,\alpha}$ to $C^{0,\alpha}$. The latter space
can be put in duality with $C^{2,\alpha}$ using the $L^2$-pairing, and in this way one obtains a sort
of gradient map which is in fact Fredholm. This yields a local Palais--Smale condition, as in
the case of Fredholm maps on Hilbert manifolds.

Second, the notion of nondegeneracy for critical points/orbits suitable for developing local Morse theory
in a Banach space context is delicate. Namely, one cannot require that the second variation be an isomorphism
from the tangent space to its dual, because again for general Banach spaces there exists no isomorphism
from the space to its dual. Recently, a computation of the critical groups for a certain variational
problem in Banach spaces has been performed under rather mild nondegeneracy assumtpions (simply,
injectivity of the Hessian, see \cite{CingVan1, CingVan2}), but the techniques in \cite{CingVan1, CingVan2}
do not extend in an obvious way to the problem discussed in the present paper.
Following \cite{Chang, Chang2, Uhl}, the appropriate notion of nondegeneracy for critical points of
functions in Banach manifolds has to be given in terms of a sort of splitting of the tangent space into
the sum of a strictly positive and a strictly negative subspaces for the Hessian.
We will use Chang's notion of \emph{s-nondegeneracy} for isolated critical points
of functionals on Banach manifolds (see \cite{Chang}), adapted to the case of isolated critical orbits.
This is proved here using an extension of the Hessian to the Sobolev space $H^1$, in which $C^{2,\alpha}$
sits continuously and densely, together with the fact that, by elliptic regularity, the eigenspaces
of such extension are contained in $C^{2,\alpha}$.

These questions are discussed in the first part of the paper, where we present an abstract framework
for bifurcation of critical orbits of constrained variational problems on Banach manifolds endowed
with a continuous action of a compact Lie group. The framework is described by a set of axioms; such
an abstract treatment is not intended with the purpose of developing a \emph{general theory}, but rather
to collect in a unified language the several results from different areas needed for our goal.

Other authors have studied bifurcation of CMC embeddings; for instance, in \cite{MazPac} (see also \cite{Ros}),
it is established a bifurcation result for \emph{nodoids}, that are immersed (not embedded)
CMC cylinders in $\R^3$. In the case of nodoids,  the hard part of the proof of bifurcation consists in the
analysis of the spectrum of the Jacobi operator. The variational setup in that case is quite straightforward,
because the ambient space is linear, and so the space of embeddings is an open subset of a Banach space.
In our case, the analysis of the spectrum of the Jacobi operator is easier, due to the fact that the
Laplacian and the second fundamental form of the CMC Clifford tori in the sphere are well understood
objects; for more general CMC tori in the sphere, the spectral analysis of the Jacobi operator
is more involved, see \cite{Ros, RosSul1, RosSul2}. On the other hand, the question
of differentiability for the space of embeddings of $M$ into $N$ modulo the diffeomorphisms of $M$ is
extremely involved. This issue is studied in detail in Ref.~\cite{AliasPiccione2010}.
It should also be observed that the result in \cite{MazPac} is obtained using
a classical bifurcation result of Crandall and Rabinowitz, see \cite{CranRab}, which allows
the authors to obtain also information on the smoothness of the bifurcating branch.
The bifurcation result of Crandall and Rabinowitz \emph{cannot} be employed in the variational problem
of the present paper, for the reason that Clifford tori never bifurcate from \emph{simple eigenvalues}, even working
equivariantly. The multiplicity of eigenvalues of CMC Clifford tori is computed in Proposition~\ref{thm:deg-Morseindex}.
More recently, a bifurcation result of Smoller and Wasserman \cite{SmoWas} has been employed
to determine bifurcation of fixed boundary nodoids in $\R^3$, see \cite{KoiPalPic}.
Smoller--Wasserman's deep result gives a sufficient condition for equivariant bifurcation that
applies also in situations where there is no jump of the Morse index, in terms of the group representation
on the negative eigenspace of the second variation. However, a basic assumption for applying such result
is that the original critical branch should consist of \emph{fixed points} for the group action.
This is not the case of the problem studied in this paper, in that CMC Clifford tori are not
invariant by the whole isometry group of Euclidean spheres: they have a large stabilizer
(see Proposition~\ref{thm:conststabilizer}), but their orbit is not a single point in the space
of unparameterized embeddings (Corollary~\ref{thm:dimensionorbit}).
It is also worth mentioning the paper \cite{jost} by Jost, Li-Jost and Peng, where
the authors study a (non equivariant) bifurcation problem for minimal embeddings and relate it to catastrophe theory.

The paper is divided into three parts. In Section~\ref{sec:abstractbifurcation} we will describe the
abstract variational setup that will be employed in the proof of our result. The material is this
section deals mostly with Morse theory in Banach manifolds and bifurcation theory, and it is obtained
by a systematization of several results already established in the literature in a form suited for
our purposes.

In Section~\ref{sec:varproblem} we will study the area and volume functionals for arbitrary embeddings
of a compact manifold $M$ into a Riemannian manifold $N$, and we will set the CMC embedding variational
problem into a Banach manifold framework.

Finally, in Section~\ref{sec:Clifford} we will study the specific case of embeddings of $\mathbb S^j\times\mathbb S^{m-j}$
into $\mathbb S^{m+1}$. We will show how the abstract bifurcation results apply to the case of CMC Clifford tori,
by studying degeneracy, Morse index and stabilizers of the embeddings \eqref{eq:defClifford},
finalizing the proof of our main result.

It should be observed that bifurcation theory does not provide a geometric description of the
CMC embeddings that bifurcate from the degenerate CMC Clifford tori.
\smallskip

\noindent\textbf{Acknowledgements.}\enspace
The authors gratefully acknowledge the help provided by Gerard Misio\l ek (University of Notre Dame, Notre Dame, IN, USA)
and Marco Degiovanni (Universit\`a Cattolica del Sacro Cuore, Brescia, Italy) during many
fruitful conversations. Gerard Misio\l ek has given several suggestions on the differentiable structure for
the set of smooth embeddings of a manifold $M$ into some other manifold $N$ modulo the group of diffeomorphisms
of $M$. Marco Degiovanni has helped the authors in the non smooth bifurcation results; among several suggestions,
he has pointed out reference \cite{DeVita} that contains the details of a proof of stability of critical groups
in the case of critical submanifolds for non smooth functionals on metric spaces.

\end{section}

\begin{section}{An abstract bifurcation setup for equivariant constrained variational problems}
\label{sec:abstractbifurcation}
\subsection{Equivariant constrained variational problems in a Banach setting}
Let us assume the following setup:
\begin{itemize}
\item[(A1)] $\mathfrak M$ is a differentiable manifold modeled on a Banach space $X$;
\item[(A2)] $G$ is  a compact connected Lie group  that acts continuously on $\mathfrak M$ by
homeomorphisms;
\item[(A3)] $\mathcal A:\mathfrak M\to\R$ is a smooth \emph{$G$-invariant} function, i.e.,
$\mathcal A(gx)=\mathcal A(x)$ for all $g\in G$ and all
$x$ in $\mathfrak M$.
\item[(A4)] $\mathcal V:\mathfrak M\to\R$ is a smooth $G$-invariant function without critical points, so that for all
$c\in\R$, the inverse image $\Sigma_c=\mathcal V^{-1}(c)$ is a smooth embedded $G$-invariant submanifold of
$\mathfrak M$.
\end{itemize}
For all $x\in\mathfrak M$, we will denote by $\mathcal O(x,G)$ the $G$-orbit of $x$, and
by $G_x$ the \emph{stabilizer} of $x$, which is the closed
subgroup of $G$ consisting of all the elements $g$ such that $gx=x$.
\smallskip

We are interested in studying constrained critical points of $\mathcal A$ with constraint $\mathcal V$, i.e.,
critical points of the restriction of $\mathcal A$ to the hypersurfaces $\Sigma_c=\mathcal V^{-1}(c)$, when $c$ varies
in $\R$.
Note that if $x_0$ belongs to $\Sigma_c$, then the orbit $\mathcal O(x_0,G)$ is entirely
contained in $\Sigma_c$. Moreover, if $x_0$ is a critical point of $\mathcal A\vert_{\Sigma_c}$,
$\mathcal O(x_0,G)$ consists entirely of critical points of $\mathcal A\vert_{\Sigma_c}$.
We say then that $\mathcal O(x_0,G)$ is a \emph{constrained critical orbit} of $\mathcal A$ subject to the
constraint $\mathcal V=\text{const}$. We will also set the following axiom:
\begin{itemize}
\item[(A5)] for every critical point $x_0$ of $\mathcal A$ subject to the
constraint $\mathcal V=\text{const}$, the orbit
$\mathcal O(x_0,G)$ is a smooth (finite dimensional compact) embedded submanifold of $\mathfrak M$, on
which the action of $G$ is smooth.
\end{itemize}
We observe that, by a result of Dancer \cite{Dan2}, assumption (A5) is always satisfied
when $\mathfrak M$ is a Banach space and the action of $G$ is by linear isomorphisms.

\subsection{Nondegeneracy and Morse index}\label{sub:nondegeneracy}
Consider a constrained critical orbit $\mathcal O(x_0,G)$ of $\mathcal A$ subject  to the
constraint $\mathcal V=\text{const.}$ with $\mathcal V(x_0)=c$.
The tangent space $T_{x_0}\mathcal O(x_0,G)$ is
contained in the kernel of the second variation $\mathrm d^2\big(\mathcal A\vert_{\Sigma_c}\big)$;
one possible notion of nondegeneracy for the orbit can be given by requiring
that the kernel of the bounded symmetric bilinear form $\mathrm d^2\big(\mathcal A\vert_{\Sigma_c}\big)(x_0)$
coincides with $T_{x_0}\mathcal O(x_0,G)$ in $T_{x_0}\Sigma_c$.

What we will be actually interested in is the question of accumulation of critical orbits
corresponding to different values of $c$, and this has to do with another type of degeneracy.
For $\lambda\in\R$, define $\mathcal A_\lambda:\mathfrak M\to\R$ by:
\begin{equation}\label{eq:defAlambda}
\mathcal A_\lambda=\mathcal A+\lambda\cdot\mathcal V.
\end{equation}
Recall that, by the Lagrange multiplier method, $x_0\in\Sigma_c$
is a critical point of the restriction $\mathcal A\vert_{\Sigma_c}$ if and only if there exists a real number $\lambda_0$
such that $x_0$ is a free critical point of the function
$\mathcal A_{\lambda_0}$ (note that $\lambda_0$ for all
the critical points of the orbit $\mathcal O(x_0,G)$).
Moreover, in this case the second variation of $\mathcal A\vert_{\Sigma_c}$ at $x_0$ is given by the
restriction of the second variation $\mathrm d^2\mathcal A_{\lambda_0}(x_0)$ to $T_{x_0}\Sigma_c$.
We will say that $\mathcal O(x_0,G)$ is a \emph{nondegenerate constrained critical orbit}
of $\mathcal A$ subject to the constraint $\mathcal V=\text{const.}$ if the kernel of the
symmetric bilinear form $\mathrm d^2\mathcal A_{\lambda_0}(x_0)$ in $T_{x_0}\mathfrak M$
is equal to $T_{x_0}\mathcal O(x_0,G)$. Note that this is a weaker notion of nondegeneracy compared
to the classical nondegeneracy of critical points for functions on a Hilbert manifold, where
one requires that the second derivative $\mathrm d^2\mathcal A_{\lambda_0}(x_0)$ be an isomorphism
from the tangent space $T_{x_0}\mathfrak M$ to its dual, and not just an injective map.
In the classical Hilbert setting, injectivity of the second derivative is not a condition
strong enough to develop Morse theory. Appropriate Fredholmness assumptions
will be imposed later, which will imply that our notion of nondegeneracy is suitable for
applying Morse theoretical techniques.

Similarly, there are two distinct notions of Morse index of a constrained critical orbits.
Given a (nondegenerate) constrained critical orbit $\mathcal O(x_0,G)$ with $\mathcal V(x_0)=c$
and with Lagrange multiplier $\lambda_0$,
we define the \emph{weak Morse index} $\mathfrak i_{\mathrm{Morse}}^\mathrm{w}\big(\mathcal O(x_0,G)\big)$
the (possibly infinite) dimension of a maximal subspace of $T_{x_0}\Sigma_c$ on which $\mathrm d^2\mathcal A_{\lambda_0}(x_0)$
is negative definite. By the \emph{strong Morse index}
$\mathfrak i_{\mathrm{Morse}}^\mathrm{s}\big(\mathcal O(x_0,G)\big)$
we mean the dimension of a maximal subspace of $T_{x_0}\mathfrak M$ on which $\mathrm d^2\mathcal A_{\lambda_0}(x_0)$
is negative definite. Since $\Sigma_c$ has codimension $1$ in $\mathfrak M$, it follows immediately
that either both the strong and the weak Morse index are infinite, or the following inequalities hold:
\begin{equation}\label{eq:wsindices}
\mathfrak i_{\mathrm{Morse}}^\mathrm{w}\big(\mathcal O(x_0,G)\big)\le \mathfrak i_{\mathrm{Morse}}^\mathrm{s}\big(\mathcal O(x_0,G)\big)
\le\mathfrak i_{\mathrm{Morse}}^\mathrm{w}\big(\mathcal O(x_0,G)\big)+1.\end{equation}
\subsection{Hilbertization and Fredholmness}
Although the basic framework for the variational problems considered in this paper
is given by manifolds modeled on Banach spaces, it will be important to have an
underlying Hilbert structure, on which the second derivative of our functionals
must be represented by self-adjoint Fredholm operators. In the purely Banach space context,
the assumption of Fredholmness for the second derivative of functionals is not reasonable,
as in most cases there exists no Fredholm operator between a Banach space and its dual.

A Hilbert/Fredholm structure will be employed in this paper in three different ways.
First, Fredholmness of the second derivative is used to prove a local compactness condition
(\emph{Palais--Smale condition}) for our functionals. Second,
for the proof of rigidity of the Clifford family away from the jumps of the Morse index
we will need to use a version of the implicit function theorem which requires Fredholmness.
Third, a Hilbert/Fredholm structure is needed in order to employ Morse
theoretical techniques in the Banach manifold context, in the spirit of \cite{Chang, Uhl}.

Two different sets of axioms are required. Axiom (HF-A) deals with the notion of \emph{gradient map}
for the functionals $\mathcal A_\lambda$, and it has the main purpose to guarantee a local
(PS) condition (see Proposition~\ref{thm:abstractPS}). Axiom (HF-B) deals with the second
derivative of $\mathcal A_\lambda$, and this is related to the computation of the local
homological invariants of Morse theory in the Banach manifold setting. It will
be used essentially in Proposition~\ref{thm:computCritGroups}.
The reader may keep in mind that, in our applications, the Banach space $X$ is given by the
space of $C^{k,\alpha}$ sections ($k\ge2$) of some Riemannian vector bundle $E$ over a compact manifold
$M$, the Banach space $Y$ is the space of $C^{k-2,\alpha}$ sections of $E$, $\mathcal H_0$ is the
space of $L^2$-sections of $E$ and $\mathcal H_1$ is the space of sections of $E$ having Sobolev
class $H^1$.
\smallskip
\begin{itemize}
\item[(HF-A)]
Given a critical point $x_0$ of $\mathcal A_{\lambda_0}$,
there exists an open neighborhood $U$ of $x_0$ diffeomorphic to an open subset of the Banach space $X$
(the model of $\mathfrak M$),
another Banach space $Y$ and a Hilbert space $\big(\mathcal H_0,\langle\cdot,\cdot\rangle_0\big)$, with continuous
inclusions
$X\hookrightarrow Y\hookrightarrow\mathcal H_0$ having dense images, and a map
\[H:\left]\lambda_0-\varepsilon,\lambda_0+\varepsilon\right[\times U\longrightarrow Y\] of class $C^1$ such that:
\[\mathrm d\mathcal A_{\lambda}(x)v=\big\langle H(\lambda,x),v\big\rangle_0,\]
for all $x\in U$, $\lambda\in\left]\lambda_0-\varepsilon,\lambda_0+\varepsilon\right[$ and $ v\in X$,
whose partial derivative:
\[\frac{\partial H}{\partial x}(\lambda_0,x_0):X\longrightarrow Y\]
is a Fredholm linear map of index $0$. We will call such  $H$ a \emph{gradient map} for the family $\mathcal A_\lambda$.
\medskip

\item[(HF-B)]
Given a critical point $x_0$ of $\mathcal A_{\lambda_0}$,
there exists a Hilbert space $\mathcal H_1$, a continuous inclusion $X\hookrightarrow\mathcal H_1$
having dense image, an  open neighborhood $U$ of $x_0$ in $\mathfrak M$ diffeomorphic to an open
subset $V$ of $X$, such that (identifying $U$ and $V$ with such diffeomorphism and considering
$\mathcal A_{\lambda_0}$ as a function on $V$):
\smallskip

\begin{itemize}
\item[(HF-B1)] for $x\in V$, the second derivative $\mathrm d^2\mathcal A_{\lambda_0}(x)$ admits an
extension to a bounded \emph{essentially positive}
symmetric bilinear form on $\mathcal H_1$, represented by the (self-adjoint) operator $S_{\lambda_0,x}$
on $\mathcal H_1$;
\item[(HF-B2)] there exists $\delta>0$ such that, for $\sigma\in\left]-\infty,\delta\right]$,
the $\sigma$-eigenspace of the essentially positive operator $S_{\lambda_0,x_0}$ is contained in $T_{x_0}\mathfrak M$.
\end{itemize}
\smallskip

\noindent We will also require that the above objects depend continuously
on $x_0$ and $\lambda_0$:
\smallskip

\begin{itemize}
\item[(HF-B3)] Axioms (HF-B1) and (HF-B2) hold for every $\lambda$ near $\lambda_0$ ,
and the map $S_{\lambda,x}$ depend continuously on $\lambda$  and $x$.
\end{itemize}
\end{itemize}
By an \emph{essentially positive} self-adjoint operator on a Hilbert space we mean an operator of the form
$P+K$, where $P$ is a positive isomorphism and $K$ is a self-adjoint compact operator. Equivalently,
essentially positive operators are self-adjoint Fredholm operators whose essential spectrum in contained
in $\left]0,+\infty\right[$. If $S$ is an essentially positive self-adjoint operator, then there exists
$\delta>0$ such that, if $\mathfrak s(S)\subset\R$ denotes the spectrum of $S$, the intersection
$\mathfrak s(S)\cap\left]-\infty,\delta\right]$ consists of a finite number of eigenvalues, each of which has finite multiplicity.
A symmetric bilinear form
$B$ on a Hilbert space is called essentially positive if $B=\langle S\cdot,\cdot\rangle$ for some essentially
positive operator $S$. This notion is independent on
the choice of a Hilbert space inner product $\langle\cdot,\cdot\rangle$
on $H$.

A few comments on the HF-axioms are in order.
First, we observe that the assumption on the density of the inclusions $X\hookrightarrow Y\hookrightarrow\mathcal H_0$
in (HF-A) implies that $\mathrm d\mathcal A_\lambda(x)=0$ if and only if the gradient $H(\lambda,x)$ vanishes.
Namely, if $\mathrm d\mathcal A_\lambda(x)=0$, then $H(\lambda,x)$ is orthogonal to the dense subspace $X$.

The second observation is that the strong nondegeneracy for a critical orbit $\mathcal O(x_0,G)$
is equivalent to the fact that the kernel of the derivative $\frac{\partial H}{\partial x}(\lambda_0,x_0)$
has dimension equal to the dimension of $\mathcal O(x_0,G)$. By the Fredholmness assumtpion, this implies
that, given any closed complement $X_2$ of $T_{x_0}\mathcal O(x_0,G)$ in $X$, the linear
map $\frac{\partial H}{\partial x}(\lambda_0,x_0)$ restricts to a Banach space isomorphism between
$X_2$ and the image of $\frac{\partial H}{\partial x}(\lambda_0,x_0)$.

Third, assumption (HF-B2) implies that $\mathcal O(x_0,G)$ is a nondegenerate critical orbit
for $\mathcal A_{\lambda_0}$ if the self-adjoint operator $S_{\lambda_0,x_0}$ is an isomorphism of $\mathcal H_1$.
Moreover, (HF-B1) and (HF-B2) imply that the strong Morse index $\mathfrak i_{\mathrm{Morse}}^\mathrm{s}\big(\mathcal O(x_0,G)\big)$
is finite and equal to the sum of the dimensions of the negative eigenspaces of $S_{\lambda_0,x_0}$.

Finally, it should be remarked that in specific examples the Hilbert spaces
$\mathcal H_0$ and $\mathcal H_1$ in Axioms (HF-A) and (HF-B) may be related.
For instance, in the situation described above where $X$ is the Banach space of $C^{2,\alpha}$-sections
of a Riemannian vector bundle $E$ over a compact manifold $M$, if the derivative $\frac{\partial H}{\partial x}(\lambda_0,x_0)$
is an elliptic operator, then the Hilbert space $\mathcal H_1$ can be defined as the Hilbert space completion
of $X$ with respect to the inner product \[\langle x_1,x_2\rangle_1=
\langle x_1,x_2\rangle_0+\left\langle\tfrac{\partial H}{\partial x}(\lambda_0,x_0)x_1,
x_2\right\rangle_0.\]

\subsection{Pseudo-critical points}
Let us use the axiom (HF-A) to show a preliminary result on the distribution of critical points
of the family $\mathcal A_\lambda$. In the variational setup (A1)---(A5), assume that (HF-A) holds
in a neighborhood $U$ critical point $x_0$ of $\mathcal A_{\lambda_0}$. Set:
\[X_1=\mathrm{Ker}\left[\frac{\partial H}{\partial x}(\lambda_0,x_0)\right],
\qquad Y_2=\mathrm{Im}\left[\frac{\partial H}{\partial x}(\lambda_0,x_0)\right];\]
let $X_2$ be a closed complement of $X_1$ in $X$ and $Y_1$ be a closed complement of $Y_2$ in $Y$.
Note that $\mathrm{dim}(X_1)=\mathrm{dim}(Y_1)=d$, by the zero index assumption on the Fredholm map $\frac{\partial H}{\partial x}(\lambda_0,x_0)$.
Let $P_2:Y\to Y_2$ be the projection relative to the direct sum decomposition $Y=Y_1\oplus Y_2$, and
set $H_2=P_2\circ H$. A point $(\bar\lambda,\bar x)$ in the domain of $H$ will be called a \emph{pseudo-critical
point} for the family $\mathcal A_\lambda$ if $H_2(\bar\lambda,\bar x)=0$. Under certain assumptions (see Proposition~\ref{thm:bifimpliesdegen}),
we will show that pseudo-critical points are in fact critical. Next Lemma tells us how pseudo-critical
points are displaced near a nondegenerate critical orbit.
\begin{lem}\label{thm:pseudocritical}
In the variational setup (A1)---(A5), let $x_0\in\mathfrak M$ be a critical point of $\mathcal A_{\lambda_0}$
whose critical orbit $\mathcal O(x_0,G)$ has dimension $d$ and is nondegenerate. Assume that
(HF-A) holds at $x_0$.
Then, there exists an open neighborhood $W$ of $(\lambda_0,x_0)$ in $\R\times\mathfrak M$ and
a $(d+1)$-dimensional submanifold $\mathcal D\subset W$ such that, for $(\bar\lambda,\bar x)\in  W$,
$(\bar\lambda,\bar x)$ is a pseudo-critical point for the family $\mathcal A_\lambda$
if and only if $(\bar\lambda,\bar x)\in\mathcal D$.
\end{lem}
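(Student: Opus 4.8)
The plan is to perform a Lyapunov--Schmidt reduction and then apply the submersion form of the implicit function theorem to the map $H_2=P_2\circ H$. Since $x_0$ is a critical point of $\mathcal A_{\lambda_0}$, axiom (HF-A) gives $H(\lambda_0,x_0)=0$, hence $H_2(\lambda_0,x_0)=P_2(0)=0$; thus $(\lambda_0,x_0)$ lies in the pseudo-critical set $H_2^{-1}(0)$, and it suffices to describe this set near $(\lambda_0,x_0)$. Note that $H_2$ is of class $C^1$, being the composition of the $C^1$ map $H$ with the bounded linear projection $P_2:Y\to Y_2$, and that it takes values in the Banach space $Y_2$.

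First I would compute the differential of $H_2$ at $(\lambda_0,x_0)$, regarded as a map
\[
\mathrm dH_2(\lambda_0,x_0):\R\times X\longrightarrow Y_2 .
\]
On the $X$-factor one has $\tfrac{\partial H_2}{\partial x}(\lambda_0,x_0)=P_2\circ\tfrac{\partial H}{\partial x}(\lambda_0,x_0)$; since $\mathrm{Im}\big[\tfrac{\partial H}{\partial x}(\lambda_0,x_0)\big]=Y_2$ and $P_2$ restricts to the identity on $Y_2$, this equals $\tfrac{\partial H}{\partial x}(\lambda_0,x_0)$ viewed as a surjection onto $Y_2$ with kernel $X_1$. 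By nondegeneracy of $\mathcal O(x_0,G)$ (which, as observed after the HF-axioms, amounts to $X_1=\Ker\big[\tfrac{\partial H}{\partial x}(\lambda_0,x_0)\big]=T_{x_0}\mathcal O(x_0,G)$), we have $\dim X_1=d$. In particular $\mathrm dH_2(\lambda_0,x_0)$ is already surjective on the $X$-factor, hence surjective; and, choosing $v_0\in X$ with $\tfrac{\partial H}{\partial x}(\lambda_0,x_0)v_0=-P_2\big(\tfrac{\partial H}{\partial\lambda}(\lambda_0,x_0)\big)$ (possible by the surjectivity just noted), a direct check shows
\[
\Ker\,\mathrm dH_2(\lambda_0,x_0)=\mathrm{span}\{(1,v_0)\}\oplus\big(\{0\}\times X_1\big),
\]
which has dimension $d+1$.

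Then I would invoke the implicit function theorem. The kernel $K:=\Ker\,\mathrm dH_2(\lambda_0,x_0)$ is finite-dimensional, hence admits a closed complement $Z$ in $\R\times X$, and by the open mapping theorem $\mathrm dH_2(\lambda_0,x_0)\vert_Z:Z\to Y_2$ is a Banach space isomorphism. Writing points of $\R\times\mathfrak M$ near $(\lambda_0,x_0)$ in the splitting $K\oplus Z$ and solving $H_2=0$ for the $Z$-component with the $K$-component as parameter, the implicit function theorem yields an open neighborhood $W$ of $(\lambda_0,x_0)$ and a $C^1$ map defined on an open subset of $K$ whose graph is exactly $H_2^{-1}(0)\cap W$. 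Setting $\mathcal D$ equal to this graph gives a $(d+1)$-dimensional $C^1$ submanifold of $W$ coinciding with the set of pseudo-critical points lying in $W$, which is the assertion.

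The derivative computation and the bookkeeping with the decompositions $X=X_1\oplus X_2$, $Y=Y_1\oplus Y_2$ are routine; the only point deserving care is the verification that $\mathrm dH_2(\lambda_0,x_0)$ is a submersion with splitting kernel, which rests precisely on the index-zero Fredholm hypothesis in (HF-A) (ensuring $Y_2$ closed and complemented and $\dim Y_1=\dim X_1=d$) together with nondegeneracy of the orbit. I do not anticipate a genuine obstacle here; the only subtlety is that, because $H$ is merely $C^1$, the submanifold $\mathcal D$ is a priori only of class $C^1$, which is all that will be needed later. One may add, as a sanity check, that $\{\lambda_0\}\times\mathcal O(x_0,G)\subset\mathcal D$ near $(\lambda_0,x_0)$ — each point of the orbit being an honest critical point of $\mathcal A_{\lambda_0}$, hence a zero of $H(\lambda_0,\cdot)$ — which accounts for $d$ of the $d+1$ dimensions of $\mathcal D$, the remaining one being transverse to the slice $\{\lambda=\lambda_0\}$.
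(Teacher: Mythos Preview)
Your proof is correct and follows essentially the same route as the paper's: both arguments amount to showing that $H_2^{-1}(0)$ is a $(d+1)$-dimensional $C^1$ submanifold near $(\lambda_0,x_0)$ by an implicit/inverse function theorem argument, using that the restriction of $\tfrac{\partial H}{\partial x}(\lambda_0,x_0)$ to a complement of $X_1$ is an isomorphism onto $Y_2$. The only cosmetic difference is that the paper packages this via the inverse function theorem applied to the augmented map $\mathcal F(\lambda,a,b)=(\lambda,a,H_2(\lambda,a+b))$ on $\R\times X_1\times X_2$, thereby parameterizing $\mathcal D$ explicitly by $\R\times X_1$, whereas you apply the submersion theorem directly to $H_2$ and parameterize $\mathcal D$ by the abstract kernel $K$; the content is the same.
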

\begin{proof}
Let $a_0\in X_1$ and $b_0\in X_2$ be such that $x_0=a_0+b_0$.
Consider the $C^1$-map $\mathcal F$, defined in a neighborhood of $(\lambda_0,a_0,b_0)$ in
$\R\times X_1\times X_2$ and taking values in a neighborhood of $(\lambda_0,a_0,0)$ in $\R\times X_1\times Y_2$,
obtained by setting:
\[\mathcal F(\lambda,a,b)=\big(\lambda,a,H_2(\lambda,a+b)\big).\]
We claim that $\mathcal F$ is a diffeomorphism around the point $(\lambda_0,a_0,b_0)$; in order to
prove the claim it suffices to apply the Inverse Mapping Theorem, observing that the differential
$\mathrm d\mathcal F(\lambda_0,a_0,0)$ is written in block form as:
\[\mathrm d\mathcal F(\lambda_0,a_0,0)=\begin{pmatrix}1&0&0\cr0&\mathrm{Id}&0\cr *&*&T\end{pmatrix},\]
where $T=\frac{\partial H_2}{\partial b}(\lambda_0,a_0,b_0):X_2\to Y_2$, and $\mathrm{Id}$ is the identity map of $X_1$.
Now, $\mathrm d\mathcal F(\lambda_0,a_0,0)$ is an isomorphism, as $T$ is an isomorphism. Notice in fact that
$T$ is the restriction of $\frac{\partial H}{\partial x}(\lambda_0,x_0)$ to $X_1$.
 Then, the set of pseudo-critical points
for the family $\mathcal A_\lambda$ in a small neighborhood of $(\lambda_0,a_0,b_0)$ is the graph
of the $C^1$-function $\psi$, defined in a neighborhood of $(\lambda_0,a_0)$ in $\R\times X_1$ and taking
values in a neighborhood of $b_0$ in $X_2$, defined by $\psi(\lambda,a)=\pi\big(\mathcal F^{-1}(\lambda,a,0)\big)$,
where $\pi:\R\times X_1\times X_2\to X_2$ is the projection onto the third coordinate.
The graph of $\psi$ is a $(d+1)$-dimensional $C^1$-submanifold of $\R\times X$, which is identified
with a $(d+1)$-dimensional $C^1$-submanifold $\mathcal D$ of $\R\times\mathfrak M$.
\end{proof}
\subsection{The local Palais--Smale condition}
Recall that a \emph{Palais--Smale sequence} for the functional $\mathcal A_\lambda$ is a sequence
$(x_n)_{n\in\mathds N}$ such that $\vert\mathcal A_\lambda(x_n)\vert$ is bounded and
$\Vert\mathrm d\mathcal A(x_n)\Vert$ is infinitesimal. Given a (closed) subset $\mathfrak C\subset\mathfrak M$,
the functional $\mathcal A_\lambda$ is said to satisfy the \emph{Palais--Smale condition} in $\mathfrak C$ if
every Palais--Smale sequence for $\mathcal A_\lambda$ contained in $\mathfrak C$ has a converging subsequence.

An adaptation of a classical result of Marino and Prodi (see \cite{MarProd}), gives
the following:
\begin{prop}\label{thm:abstractPS}
In the variational setup described by (A1)---(A5), assume that (HF-A) is satisfied
at every point of a constrained critical orbit $\mathcal O(x_0,G)$ of the functional $\mathcal A_{\lambda_0}$.
Then, given $\varepsilon>0$ sufficiently small, there exists a closed neighborhood
$W$ of $\mathcal O(x_0,G)$ such that, for all $\lambda\in\left]\lambda_0-\varepsilon,\lambda_0+\varepsilon\right[$,
$\mathcal A_\lambda$ satisfies the Palais--Smale condition in $W$.
\end{prop}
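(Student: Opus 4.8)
The plan is to reduce the statement to a single coordinate chart around a point of the orbit and then run a Lyapunov--Schmidt reduction based on the index--zero Fredholm hypothesis in (HF-A), in the spirit of the perturbation argument of Marino and Prodi. Since $\mathcal O(x_0,G)$ is compact and consists of critical points of $\mathcal A_{\lambda_0}$ (because $\mathcal A_{\lambda_0}$ is $G$-invariant), (HF-A) holds at each of its points; so I would first cover $\mathcal O(x_0,G)$ by finitely many open sets $U_1,\dots,U_N\subset\mathfrak M$ on which (HF-A) is in force, with associated data $Y_i$, $\mathcal H_0^{(i)}$, gradient maps $H^{(i)}$ and centers $y_i\in\mathcal O(x_0,G)$ (so that $H^{(i)}(\lambda_0,y_i)=0$). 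Then I choose $\varepsilon>0$ small and a closed neighborhood $W$ of $\mathcal O(x_0,G)$ with $W\subset\bigcup_iU_i$, small enough that within each chart $W\cap U_i$ is bounded in the coordinates of $U_i$, that $\tfrac{\partial H^{(i)}}{\partial x}$ stays bounded on it, and that the estimates below hold with uniform constants. It suffices to show that, for any fixed $\lambda\in\left]\lambda_0-\varepsilon,\lambda_0+\varepsilon\right[$, every Palais--Smale sequence $(x_n)$ for $\mathcal A_\lambda$ contained in $W$ has a converging subsequence. By the pigeonhole principle, infinitely many of the $x_n$ lie in one of the $U_i$; replacing $(x_n)$ by this subsequence I work in a single chart, written $U$, with data $Y$, $\mathcal H_0$, $H$, center $y_0$, and $T=\tfrac{\partial H}{\partial x}(\lambda_0,y_0)\colon X\to Y$ Fredholm of index $0$, $H(\lambda_0,y_0)=0$.

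Identifying $U$ with an open subset of $X$, I split $X=X_1\oplus X_2$ with $X_1=\Ker(T)$ finite dimensional, and $Y=Y_1\oplus Y_2$ with $Y_2=\mathrm{Im}(T)$ and $\dim Y_1=\dim X_1$, so that $T$ restricts to an isomorphism $T|_{X_2}\colon X_2\to Y_2$; let $Q\colon Y\to Y_2$ be the projection with kernel $Y_1$ (so $QT=T$) and $P=\mathrm{Id}-Q$. (This is the Lyapunov--Schmidt splitting already used in the proof of Lemma~\ref{thm:pseudocritical}.) Writing $x_n=y_0+a_n+b_n$ with $a_n\in X_1$, $b_n\in X_2$, the sequence $(a_n)$ is bounded in the finite dimensional space $X_1$, hence a subsequence converges. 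Using $H(\lambda_0,y_0)=0$ and $\tfrac{\partial H}{\partial x}(\lambda_0,y_0)=T$, write $H(\lambda,y_0+a+b)=Tb+r(\lambda,a,b)$ with $r$ of class $C^1$, $r(\lambda_0,0,0)=0$ and $\partial_{(a,b)}r(\lambda_0,0,0)=0$; shrinking $\varepsilon$ and $W$ makes $(a,b)\mapsto r(\lambda,a,b)$ Lipschitz there with a constant $<\tfrac12\,\|(T|_{X_2})^{-1}\|^{-1}\|Q\|^{-1}$. Applying $(T|_{X_2})^{-1}Q$ to the identity above and using $QT=T$ gives
\[
b_n=(T|_{X_2})^{-1}Q\,H(\lambda,x_n)-(T|_{X_2})^{-1}Q\,r(\lambda,a_n,b_n),
\]
which exhibits $b_n$ as the unique fixed point of a $\tfrac12$-contraction of $X_2$ depending Lipschitz-continuously on the pair $\big(a_n,\;(T|_{X_2})^{-1}Q\,H(\lambda,x_n)\big)$. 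Consequently $(x_n)$ has a convergent subsequence as soon as the ``reduced gradient'' $\big(Q\,H(\lambda,x_n)\big)_n$ has a convergent subsequence in $Y_2$.

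It remains to establish precompactness of $\big(Q\,H(\lambda,x_n)\big)_n$ in $Y_2$, which is the crux and where the Marino--Prodi-type argument enters. Since $H$ is bounded on $W$, the finite dimensional component $P\,H(\lambda,x_n)\in Y_1$ has a convergent subsequence, so one is reduced to the component $Q\,H(\lambda,x_n)$ lying in the range of the isomorphism $T|_{X_2}$. Here the Palais--Smale hypothesis is used: rewritten through $\mathrm d\mathcal A_\lambda(x_n)v=\langle H(\lambda,x_n),v\rangle_0$ and the density of $X$ in $\mathcal H_0$, it says that the gradient $H(\lambda,x_n)$ tends to zero in the (weak) topology of pairing with $X$, and this weak control must be upgraded to strong convergence, along a subsequence, of $Q\,H(\lambda,x_n)$ in $Y_2$. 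This upgrade is exactly the point where the index--zero Fredholm structure of $T$, together with the underlying Hilbert scale $X\hookrightarrow Y\hookrightarrow\mathcal H_0$ --- in the concrete setting, elliptic regularity for $T$ and the compactness afforded by the embeddings of $C^{k,\alpha}$-spaces --- must be invoked, in the same manner in which a ``Fredholm plus compact'' gradient yields the Palais--Smale condition on Hilbert manifolds. I expect this last step to be the main obstacle: in a general Banach space the smallness of $\mathrm d\mathcal A_\lambda(x_n)$ in the dual of $X$ does not by itself force precompactness of the gradient in $Y$, and the argument really hinges on the interplay between the Fredholm property of $T$ and the Hilbert scale. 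Finally, the uniformity of $W$ and $\varepsilon$ over $\lambda\in\left]\lambda_0-\varepsilon,\lambda_0+\varepsilon\right[$ comes for free: by (HF-A) the map $H$, hence $T$, the splitting, the Lipschitz bound on $r$ and the contraction estimate, depend continuously on $\lambda$, so all constants can be taken uniform after a final shrinking of $\varepsilon$ and $W$.
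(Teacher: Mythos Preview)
Your Lyapunov--Schmidt reduction and contraction estimate are a valid route, but the proof is incomplete precisely where you say it is: you do not establish that $QH(\lambda,x_n)$ is precompact in $Y_2$, and you end by gesturing at ``elliptic regularity'' and ``Fredholm plus compact'' without carrying out an argument. The paper takes a different and more economical path. Instead of splitting and contracting by hand, it invokes the local normal form for $C^1$ maps with index-zero Fredholm derivative (Abraham--Robbin, \cite[Theorem~1.7]{AbrRob}): after $C^1$ changes of coordinates in source and target, $H(\lambda,\cdot)$ takes the form $(u,v)\mapsto(\eta_\lambda(u,v),u)$ with $v$ and $\eta_\lambda(u,v)$ lying in finite-dimensional spaces. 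A map of this shape is \emph{proper} on bounded sets (the second component is the identity in $u$, and $v$ ranges in a finite-dimensional ball), and this properness persists for $\lambda$ near $\lambda_0$ by continuity of $H$ and $\partial H/\partial x$. Given a Palais--Smale sequence $(x_n)$ in the resulting closed neighborhood, the paper then asserts that $H(\lambda,x_n)\to0$, so that $K=\{H(\lambda,x_n):n\in\mathds N\}\cup\{0\}$ is compact and $(x_n)\subset H(\lambda,\cdot)^{-1}(K)$ has a convergent subsequence by properness. Your contraction argument is essentially a do-it-yourself version of this normal form; the properness packaging is shorter and avoids tracking Lipschitz constants.

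You are right that the passage from ``$\Vert\mathrm d\mathcal A_\lambda(x_n)\Vert\to0$'' to ``$H(\lambda,x_n)\to0$ in $Y$'' is not forced by the axiom (HF-A) as literally stated; the paper simply asserts it in one line. The intended reading is evidently that in this framework the operative Palais--Smale condition is the one expressed through the gradient map $H$, which is exactly what holds in the concrete elliptic application of Section~\ref{sec:varproblem}. If you grant that, your own gap closes immediately (then $QH(\lambda,x_n)\to0$ and your contraction gives convergence of $(b_n)$); but you should be aware that this is the point where both arguments lean on the same unproved assertion, and that the cleaner way to organize the conclusion is via properness rather than the fixed-point estimate.
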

\begin{proof}
Using the compactness of the orbit and the $G$-equivariance, it suffices to show the existence
of a closed neighborhood of $x_0$ on which $\mathcal A_\lambda$ satisfies the Palais--Smale condition
for all $\lambda$ sufficiently close to $\lambda_0$.
Since the set of Fredholm operators is open in the space of all bounded operators from $X$ to $Y$
and the map $H$ is of class $C^1$, by taking $\varepsilon>0$ and $U$ sufficiently small
we can assume that the partial derivative $\frac{\partial H}{\partial x}(\lambda,z)$ is Fredholm
for all $z\in U$. The local form of a $C^1$-map between Banach spaces with Fredholm derivative (see for
instance \cite[Theorem~1.7, p.\ 4]{AbrRob}) says that there is a $C^1$-change of coordinates that
carries a neighborhood of $x_0$ in $U$ to a neighborhood of zero of a Banach space direct sum
$E_1^\lambda\oplus E_2^\lambda$, with $\mathrm{dim}(E_2^\lambda)<+\infty$, and that takes $x_0$ to $(0,0)$,
and a $C^1$-change of coordinates that carries a neighborhood of  $H(\lambda_0,x_0)$ in $Y$
to a neighborhood of zero of another Banach space direct sum $F_1^\lambda\oplus E_1^\lambda$ with
$\mathrm{dim}(F_1^\lambda)<+\infty$,
and that takes $H(\lambda_0,x_0)$ to $(0,0)$, such that, using these coordinates,
the map $H(\lambda,\cdot)$ takes the form $E_1^\lambda\oplus E_2^\lambda\ni(u,v)\mapsto
\big(\eta_\lambda(u,v),u\big)\in F_1^\lambda\oplus E_1^\lambda$,
where $\eta_\lambda:E_1^\lambda\oplus E_2^\lambda\to F_1^\lambda$ a $C^1$-map with $\mathrm d\eta_\lambda(0,0)=0$.
It is immediate so see that such a map is \emph{proper}
when restricted to the unit ball of $E_1^\lambda\oplus E_2^\lambda$, thus the map
$H(\lambda,\cdot)$ is proper when restricted to a suitable
closed neighborhood $W^\lambda$ of $x_0$, depending on $\lambda$.
The size of $W^\lambda$ depends continuously on $H$ and $\frac{\partial H}{\partial x}$;
a proof of this assertion is obtained easily by keeping track of sizes in the proof of
\cite[Theorem~1.7, p.\ 4]{AbrRob}, that uses the inverse function theorem. Thus, using the continuity of
$H$ and $\frac{\partial H}{\partial x}$, one can find a fixed neighborhood $W_0$ of $x_0$ such that
for all $\lambda$ sufficiently close to $\lambda_0$, $H(\lambda,\cdot)$ is proper when restricted to $W_0$.

Given a Palais--Smale sequence $(x_n)_{n\in\mathds N}$ for $\mathcal A_\lambda$
contained in such neighborhood, then $H(\lambda,x_n)$ tends to $0$ as $n\to\infty$, and thus
the set $K=\big\{H(\lambda,x_n):n\in\mathds N\big\}\bigcup\{0\big\}$ is compact. The sequence
$(x_n)_{n\in\mathds N}$ is contained in the compact subset $H(\lambda,\cdot)^{-1}(K)$, and therefore
it admits a converging subsequence, which concludes the proof.
\end{proof}
\subsection{Local Morse invariants}\label{sub:localMorsetheory}
Let $x_0$ be a critical point of $\mathcal A_{\lambda_0}$, and let
$\mathcal O=\mathcal O(x_0,G)$ be the critical orbit of $x_0$;
set $c=\mathcal A_{\lambda_0}(x_0)$.
For $q\in\R$, define $\mathcal A_{\lambda_0}^q$ the closed sublevel:
\[\mathcal A_{\lambda_0}^q=\big\{x\in\mathfrak M:\mathcal A_{\lambda_0}(x)\le q\big\}.\]
Given a coefficient ring $\mathds F$, the sequence of \emph{$\mathds F$-critical groups} of $\mathcal O$
is the sequence \[\mathfrak H_*(\mathcal O,\lambda_0;\mathds F)=
\big(\mathfrak H_\nu(\mathcal O,\lambda_0;\mathds F)\big)_{\nu\in\mathds N}\]
of relative (singular) homology groups with coefficients in $\mathds F$:
\[\mathfrak H_\nu(\mathcal O,\lambda_0;\mathds F)=H_\nu\big(\mathcal A_{\lambda_0}^c,\mathcal A_{\lambda_0}^c\setminus\mathcal O;\mathds F).\]
By excision, if $U$ is any open set of $\mathfrak M$ that contains $\mathcal O$, then:
\[\mathfrak H_\nu(\mathcal O,\lambda_0;\mathds F)=
H_\nu\big(\mathcal A_{\lambda_0}^c\cap U,(\mathcal A_{\lambda_0}^c\cap U)\setminus\mathcal O;\mathds F)\]
for all $\nu\in\mathds N$.

We will now show how are assumptions can be used to compute the critical groups of a nondegenerate
orbit. Such computation is based on an analysis of appropriate $G$-invariant neighborhoods of
a critical orbit; it will be useful to employ some terminology from principal fiber bundles.
Recall that given an $H$-principal bundle
$P\to X$ over the manifold $X$, and given a topological space $Y$ endowed with an $H$-action, the \emph{twisted product}
$P\times_HY$ is a fiber bundle over $X$ whose fiber at $x\in X$ in the quotient of the product
$P_x\times Y$ by the left action of $H$ given by:
\[H\times(P_x\times Y)\ni\big(h,(p,y)\big)\mapsto(ph^{-1},hy)\in P_x\times Y.\]
Since the right action of $H$ on $P_x$ is free and transitive, such quotient is homeomorphic
to $Y$; $P\times_YX$ is fiber bundle over $X$ with typical fiber $Y$.
When $G$ is a compact Lie group acting on a completely regular topological space $X$,
then through every $x\in X$ there is a \emph{slice} $\Sigma_x$ (see \cite[Chapter~II, Sec.~5]{Bredon});
recall that $\Sigma_x$ is a subset of $X$, which is invariant by the action of the stabilizer
$H$ of $x$, and such that the map $G\times\Sigma_x\ni(g,y)\mapsto g\cdot y\in X$ defines
an homeomorphism of the twisted product $G\times_H\Sigma_x$ and an open neighborhood
of the orbit $Gx$.

For the computation of the critical groups in our setup, we will consider
for simplicity the field $\mathds F=\mathds Z_2$.
\begin{prop}\label{thm:computCritGroups}
In the variational setup (A1)---(A5), let $x_0\in\mathfrak M$ be a critical point
of $\mathcal A_{\lambda_0}$. Assume that (HF-B) holds around the points of the  critical
orbit $\mathcal O=\mathcal O(x_0,G)$, that (HF-A) holds at every point of $\mathcal O$
and that $\mathcal O(x_0,G)$ is nondegenerate.
Set $\mu=\mathfrak i_{\mathrm{Morse}}^\mathrm{s}\big(\mathcal O(x_0,G)\big)$. Then for
all $\nu\in\mathds N$, the critical group $\mathfrak H_\nu(\mathcal O;\mathds Z_2)$
is isomorphic to the singular homology group $H_{\nu-\mu}(\mathcal O;\mathds Z_2)$
of the orbit $\mathcal O$.
\end{prop}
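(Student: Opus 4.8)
The plan is to reduce the computation of the critical groups to a local model near the orbit by means of a $G$-slice, and then to apply a parametrized version of the Morse lemma (the generalized Morse–Gromoll splitting) adapted to the Banach manifold situation using the Hilbert/Fredholm structure of axiom (HF-B). First I would fix a point $x_0\in\mathcal O$ with stabilizer $H=G_{x_0}$, and choose a slice $\Sigma_{x_0}$ at $x_0$ so that an open $G$-invariant neighborhood $U$ of $\mathcal O$ is $G$-equivariantly homeomorphic to the twisted product $G\times_H\Sigma_{x_0}$. By the excision property of the critical groups recalled just before the statement, $\mathfrak H_\nu(\mathcal O;\mathds Z_2)=H_\nu\big(\mathcal A_{\lambda_0}^c\cap U,(\mathcal A_{\lambda_0}^c\cap U)\setminus\mathcal O;\mathds Z_2\big)$, so everything reduces to understanding $\mathcal A_{\lambda_0}$ restricted to the slice, as a function on (an open subset of) the Banach space $X_2=T_{x_0}\Sigma_{x_0}$, together with its $H$-equivariance.

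The second step is the local analysis on the slice. Since $\mathcal O$ is nondegenerate, the kernel of $\mathrm d^2\mathcal A_{\lambda_0}(x_0)$ equals $T_{x_0}\mathcal O$, which is transverse to the slice; hence $x_0$ is a \emph{nondegenerate} critical point of the restriction $f:=\mathcal A_{\lambda_0}\vert_{\Sigma_{x_0}}$ in the sense that the extension $S_{\lambda_0,x_0}$ of its second derivative to the Hilbert space $\mathcal H_1$ (axiom (HF-B1)) is an essentially positive \emph{isomorphism}. Essential positivity means $S_{\lambda_0,x_0}=P+K$ with $P$ a positive isomorphism and $K$ compact and self-adjoint, so its spectrum below $\delta$ consists of finitely many eigenvalues of finite multiplicity; let $W^-$ be the (finite-dimensional) sum of negative eigenspaces, of dimension $\mu=\mathfrak i^{\mathrm s}_{\mathrm{Morse}}(\mathcal O)$. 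By axiom (HF-B2) $W^-\subset T_{x_0}\mathfrak M$, so in fact $W^-\subset X_2$, and $f$ splits (near $x_0$, after a $C^1$ change of coordinates on the slice) as a nondegenerate negative definite quadratic form on $W^-$ plus a function with positive definite Hessian on a complement — this is the content of the Morse–Gromoll-type splitting available in the Banach/Hilbert setting of \cite{Chang, Uhl}, which is legitimate here precisely because (HF-B1)–(HF-B2) provide the needed splitting into strictly positive and strictly negative subspaces and the positive part is bounded below away from zero on the complement. One must check this can be done $H$-equivariantly (averaging the change of coordinates over the compact group $H$), so that $W^-$ carries a linear $H$-action and the sublevel set of $f$ near $x_0$ is $H$-homotopy equivalent to the disk bundle pair $\big(D(W^-),S(W^-)\big)$, i.e.\ to a pointed $H$-space whose reduced $\mathds Z_2$-homology is $\mathds Z_2$ concentrated in degree $\mu$.

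The third step is to reassemble the global picture over the orbit. The negative eigenspaces $W^-$ depend continuously on the base point of $\mathcal O$ by axiom (HF-B3), and $G$-equivariance forces them to fit together into a rank-$\mu$ vector bundle $\mathcal W^-\to\mathcal O$, namely the twisted product $G\times_H W^-$. The slice decomposition then identifies $\big(\mathcal A_{\lambda_0}^c\cap U,(\mathcal A_{\lambda_0}^c\cap U)\setminus\mathcal O\big)$ up to homotopy with the Thom pair $\big(D(\mathcal W^-),S(\mathcal W^-)\big)$ of this bundle, and the Thom isomorphism with $\mathds Z_2$ coefficients (no orientability hypothesis needed over $\mathds Z_2$, which is why we restrict to that field) gives $\mathfrak H_\nu(\mathcal O;\mathds Z_2)\cong H_{\nu-\mu}(\mathcal O;\mathds Z_2)$ for all $\nu$. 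The main obstacle I expect is the rigorous justification of the equivariant parametrized splitting lemma on the slice: one must ensure that the Gromoll–Meyer reduction can be performed with $C^1$ dependence on the parameter along $\mathcal O$ \emph{and} $H$-equivariantly, using only the continuity of $S_{\lambda,x}$ from (HF-B3) and the fact that the relevant eigenspaces lie in $X$ by elliptic regularity; this is where the interplay between the Banach topology on $X$ and the Hilbert topology on $\mathcal H_1$ has to be handled carefully, and it is the technical heart of the proof.
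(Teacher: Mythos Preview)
Your proposal is correct and follows the same architecture as the paper: restrict to a slice, analyze the critical point of $\mathcal A_{\lambda_0}\vert_{\Sigma_{x_0}}$ via the spectral splitting provided by (HF-B), and then pass from fiber to total space over $\mathcal O$. The differences are in the tools invoked rather than in the strategy. On the slice, the paper does not set up a Gromoll--Meyer splitting; instead it verifies Chang's \emph{s-nondegeneracy} criterion by exhibiting the hyperbolic operator $L$ (minus the identity on $\mathcal H_-$, identity on the rest) and then quotes \cite[Theorem~1]{Chang} directly to obtain the fiber critical groups~$\cong\mathds Z_2$ in degree~$\mu$. For the reassembly, the paper observes that, by $G$-invariance of $\mathcal A_{\lambda_0}$, the pair $\big(\mathcal A_{\lambda_0}^c\cap U,(\mathcal A_{\lambda_0}^c\cap U)\setminus\mathcal O\big)$ is already a fiber bundle pair over $\mathcal O$ with fiber $\big(\Sigma_{x_0}^c,\Sigma_{x_0}^c\setminus\{x_0\}\big)$, and then applies Leray--Hirsch with $\mathds Z_2$ coefficients, noting that the cohomology extension of the fiber is automatic because the pair is (homotopic to) a vector bundle pair. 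This sidesteps the $H$-equivariant parametrized splitting you identify as the technical heart: the bundle structure comes for free from the slice theorem and $G$-invariance, so no equivariant change of coordinates is needed, only the single-fiber computation. Your Thom-isomorphism route is of course equivalent (Leray--Hirsch for a rank-$\mu$ vector bundle pair over $\mathds Z_2$ \emph{is} the Thom isomorphism), but the paper's packaging avoids the work you were worried about. One ingredient you do not mention and the paper uses explicitly is the local Palais--Smale condition from (HF-A) via Proposition~\ref{thm:abstractPS}, needed to invoke Chang's theorem on the transversal $\mathcal S$.
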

\begin{proof}
Let $H$ be the stabilizer of $x_0$, and consider the principal fiber bundle $G\longrightarrow G/H\cong\mathcal O(x_0,G)$.
Let $\Sigma_{x_0}$ be a slice at $x_0$, let $U$ be the open neighborhood of $\mathcal O$
given by $G\cdot\Sigma_{x_0}$ and set $c=\mathcal A_{\lambda_0}(x_0)$.
The intersection $\mathcal A_{\lambda_0}^c\cap U$ is a fiber bundle over $\mathcal O$, whose
typical fiber is the intersection $\Sigma_{x_0}^c=\mathcal A_{\lambda_0}^c\cap\Sigma_{x_0}$.
Let $\mathcal S$ be a submanifold of $\mathfrak M$ through $x_0$ which is transversal to
$\mathcal O$ at $x_0$ (i.e., $T_{x_0}\mathcal S\cap T_{x_0}\mathcal O=\{0\}$ and
$T_{x_0}\mathcal S+T_{x_0}\mathcal O=T_{x_0}\mathfrak M$); consider a homeomorphism
$\mathfrak h$ from $\Sigma_{x_0}$ to $\mathcal S$, with $\mathfrak h(x_0)=x_0$,
and such that $\mathcal A_{\lambda_0}\circ\mathfrak h=\mathcal A_{\lambda_0}$ on $\Sigma_{x_0}$.
Then, $\mathfrak h$ carries $\mathcal A_{\lambda_0}^c\cap\Sigma_{x_0}$ to
 $\mathcal A_{\lambda_0}^c\cap\mathcal S$ and $(\mathcal A_{\lambda_0}^c\cap\Sigma_{x_0})\setminus\{x_0\}$ to
 $\mathcal A_{\lambda_0}^c\cap\mathcal S\setminus\{x_0\}$, thus:
 \[H_k\big((\mathcal A_{\lambda_0}^c\cap\Sigma_{x_0}),(\mathcal A_{\lambda_0}^c\cap\Sigma_{x_0})\setminus\{x_0\};\mathds Z_2\big)
 \cong
 H_k\big((\mathcal A_{\lambda_0}^c\cap\mathcal S),(\mathcal A_{\lambda_0}^c\cap\mathcal S)\setminus\{x_0\};\mathds Z_2\big)\]
 for all $k$.
The restriction of $\mathcal A_{\lambda_0}$ to $\mathcal S$ is a smooth function, and it has
$x_0$ as an isolated critical point. This follows easily from the fact that, by the $G$-invariance, the
critical points of the restriction of $\mathcal A_{\lambda_0}$ to $\mathcal S$ are precisely
the critical points of $\mathcal A_{\lambda_0}$ that lie on $\mathcal S$. Then, since
$\mathcal O(x_0,G)$ is nondegenerate, it is an isolated critical orbit of $\mathcal A_{\lambda_0}$,
which has an isolated intersection with $\mathcal S$ at $x_0$, by transversality.

Now, the critical point $x_0$ of the restriction of $\mathcal A_{\lambda_0}$ to $\mathcal S$
is \emph{s-nondegenerate}, in the sense of \cite{Chang}. Recall that this means that there exists
a diffeomorphism from an open subset $\mathcal V$
of the Banach space $T_{x_0}\mathcal S$ to an open neighborhood of $x_0$ in $\mathcal S$ and a hyperbolic isomorphism
$L:T_{x_0}\mathcal S\to T_{x_0}\mathcal S$ such that, using such diffeomorphism to identify
$\mathcal A_{\lambda_0}$ with a smooth function on $\mathcal V$, the following conditions are satisfied:
\begin{itemize}
\item[(a)] $\mathrm d^2\mathcal A_{\lambda_0}(x_0)\big[Lv,w\big]=\mathrm d^2\mathcal A_{\lambda_0}(x_0)\big[v,Lw\big]$
for all $v,w\in T_{x_0}\mathcal S$;
\item[(b)] $\mathrm d^2\mathcal A_{\lambda_0}(x_0)\big[Lv,v\big]>0$ for all $v\in T_{x_0}\mathcal S$, $v\ne0$;
\item[(c)] $\mathrm d\mathcal A_{\lambda_0}(x)\big[L(x-x_0)\big]>0$ for all $x$ in $\mathcal V\setminus\{x_0\}$
with $\mathcal A_{\lambda_0}(x)\le\mathcal A_{\lambda_0}(x_0)$.
\end{itemize}
We will use Axioms (HF-B) to verify s-nondegeneracy. Consider the open subset $V$ of the Banach space $X\cong T_{x_0}\mathfrak M$,
the Hilbert space $\mathcal H_1$ and the essentially positive operator $S_{x_0,\lambda_0}$ on $\mathcal H_1$
as in (HF-B).
Denote by $T_{x_0}\mathcal O^\perp$ the orthogonal complement of $T_{x_0}\mathcal O$ in
$\mathcal H_1$. Then, setting $\overline X=X\cap T_{x_0}\mathcal O^\perp$, one has
$X=T_{x_0}\mathcal O\oplus\overline X$, because $T_{x_0}\mathcal O\subset X$;
moreover $\overline X$ is isomorphic to $T_{x_0}\mathcal S$.
Choose a local chart of $\mathfrak M$ around $x_0$ taking values in $T_{x_0}\mathcal O\oplus\overline X$
and carrying an open neighborhood of $x_0$ in $\mathcal S$ to an open neighborhood of $0$ in
$\{0\}\oplus\overline X\cong\overline X\cong T_{x_0}\mathcal S$.

The Hilbert space $T_{x_0}\mathcal O^\perp$ is invariant by $S_{\lambda_0,x_0}$,
and it splits as an orthogonal direct sum $\mathcal H_-\oplus\mathcal H_*\oplus\mathcal H_+$,
where $\mathcal H_-$ is finite dimensional and it is spanned by the eigenvectors of
$S_{\lambda_0,x_0}$ having negative eigenvalue, $\mathcal H_*$ is finite dimensional
and it is spanned by the eigenvectors of
$S_{\lambda_0,x_0}$ having eigenvalue in $\left]0,\delta\right]$, and
$\mathcal H_+$ has infinite dimension, is invariant by $S_{\lambda_0,x_0}$ and
the restriction of
$S_{\lambda_0,x_0}$ to $\mathcal H_+$ has spectrum contained in $\left]\delta,+\infty\right[$.
By assumption (HB-3), $\mathcal H_-$ and $\mathcal H_*$ are contained in $\overline X$, and
thus $\overline X=\mathcal H_-\oplus\mathcal H_*\oplus(\overline X\cap\mathcal H_+)$.
Let $L:\overline X\to\overline X$ be the isomorphism whose restriction to $\mathcal H_-$ is
minus the identity, and whose restriction to $\mathcal H_*\oplus(\overline X\cap\mathcal H_+)$
is the identity. Evidently, $L$ is hyperbolic. Such operator admits an extension to a self-adjoint
isomorphism of $\mathcal H_1$, and properties (a) and (b) above are readily verified
for such extension. Namely, it is easy to see that $L$ commutes with $S_{\lambda_0,x_0}$,
which implies that (a) holds. Property (b) is obvious, using the fact that the spaces
$\mathcal H_-$ and $\mathcal H_*\oplus(\overline X\cap\mathcal H_+)$ are orthogonal and invariant
by $S_{\lambda_0,x_0}$; the composition $S_{\lambda_0,x_0}L$ is a positive isomorphism of $\mathcal H_1$.
Property (c) is also obtained easily using
the mean value theorem for the function $t\mapsto\mathrm d\mathcal A_{\lambda_0}(x_0+tv)\big[Lv\big]$,
where $v=x-x_0$ and $x$ is near $x_0$. Namely, $\mathrm d\mathcal A_{\lambda_0}(x)\big[Lv\big]=
\mathrm d\mathcal A_{\lambda_0}(x)\big[Lv\big]-\mathrm d\mathcal A_{\lambda_0}(x_0)\big[Lv\big]=
\mathrm d^2\mathcal A_{\lambda_0}(x_0+\bar tv)\big[v,Lv]=\left\langle S_{\lambda_0,x_0+\bar tv}Lv,v\right\rangle_1$ for some $\bar t\in[0,1]$.
Since $S_{\lambda_0,x_0}L$ is a positive isomorphism of $\mathcal H_1$, then by continuity
$S_{\lambda_0,x_0+\bar tv}L$ is a positive isomorphism for $x$ near $x_0$.
Thus, $\left\langle S_{\lambda_0,x_0+\bar tv}Lv,v\right\rangle_1>0$ for $v\ne0$, and (c) holds.

By Proposition~\ref{thm:abstractPS}, $\mathcal A_{\lambda_0}$ satisfies
the Palais--Smale condition in a closed neighborhood of $x_0$ in $\mathfrak M$. Using the transversality of
$\mathcal S$ to the critical orbit $\mathcal O$, it follows that also the restriction of $\mathcal A_{\lambda_0}$
to a closed neighborhood of $x_0$ in $\mathcal S$ satisfies the Palais--Smale condition.
We can therefore apply Chang's result on Morse theory in Banach manifolds applied
to the restriction of $\mathcal A_{\lambda_0}$ to $\mathcal S$; by \cite[Theorem~1]{Chang},
we have:
\begin{equation}\label{eq:homoloyfiber}
H_k\big(\mathcal A_{\lambda_0}^c\cap\mathcal S,(\mathcal A_{\lambda_0}^c\cap\mathcal S)\setminus\{x_0\};\mathds Z_2\big)\cong
\begin{cases}\mathds Z_2,&\text{if $k=\mu$;}\cr0,&\text{if $k\ne\mu$.}\end{cases}
\end{equation}

With this, in order to compute the critical groups of the orbit $\mathcal O$ we can use
an abstract result on the homology of fiber bundles. One has a bundle pair $(E,\dot E)$ on the manifold
$\mathcal O$, where $E=\mathcal A_{\lambda_0}^c\cap U$ has typical fiber $F=\mathcal A_{\lambda_0}^c\cap\Sigma_{x_0}$
and $\dot E=\big(\mathcal A_{\lambda_0}^c\cap U\big)\setminus\mathcal O$ has typical fiber
$\dot F=\big(\mathcal A_{\lambda_0}^c\cap\Sigma_{x_0}\big)\setminus\{x_0\}$.
Using the \emph{Leray--Hirsch theorem} (see \cite[Theorem~9]{Spanier}), we have the following isomorphism:
\begin{equation}\label{eq:LerayHirsch}
H_k(E,\dot E;\mathds Z_2)\cong\bigoplus_{i=0}^nH_i(F,\dot F;\mathds Z_2)
\otimes_{\mathds Z_2}H_{n-i}(\mathcal O;\mathds Z_2).
\end{equation}
Leray--Hirsch theorem uses two assumptions that are easily verified in our case. First, the relative homology
$H_i(F,\dot F;\mathds Z_2)$ has to be finite dimensional for all $i$. Second, a technical condition called
\emph{cohomology extension of the fiber} has to be satisfied. When $E$ is (homotopic to)
an open neighborhood of the zero section
of a vector bundle and $\dot E$ is $E$ minus the zero section, then the cohomology extension of the fiber exists
always when the coefficient field is $\mathds Z_2$. From \eqref{eq:homoloyfiber} and \eqref{eq:LerayHirsch}
we compute easily:
\begin{multline*}
\mathfrak H_k(\mathcal O;\mathds Z_2)=
H_k\big(\mathcal A_{\lambda_0}^c\cap U,(\mathcal A_{\lambda_0}^c\cap U)\setminus\mathcal O;\mathds Z_2\big)
\\
\cong\bigoplus_{i=0}^kH_i\big(\mathcal A_{\lambda_0}^c\cap\Sigma_{x_0},(\mathcal A_{\lambda_0}^c\cap\Sigma_{x_0})\setminus\{x_0\};\mathds Z_2\big)
\otimes_{\mathds Z_2}H_{k-i}(\mathcal O;\mathds Z_2)
\\
\cong\bigoplus_{i=0}^kH_i\big(\mathcal A_{\lambda_0}^c\cap\mathcal S,(\mathcal A_{\lambda_0}^c\cap\mathcal S)\setminus\{x_0\};\mathds Z_2\big)
\otimes_{\mathds Z_2}H_{k-i}(\mathcal O;\mathds Z_2)\\
=H_\mu\big(\mathcal A_{\lambda_0}^c\cap\mathcal S,(\mathcal A_{\lambda_0}^c\cap\mathcal S)\setminus\{x_0\};\mathds Z_2\big)
\otimes_{\mathds Z_2}H_{k-\mu}(\mathcal O;\mathds Z_2)\cong H_{k-\mu}(\mathcal O;\mathds Z_2).
\end{multline*}
This concludes the proof.
\end{proof}
\subsection{Equivariant constrained bifurcation}
In the variational setup (A1)---(A5),
let us now assume that we have a path of constrained critical orbits, as follows:
\begin{itemize}
\item[(B1)] $[a,b]\ni r\mapsto\lambda_r\in\R$ is a map of class $C^1$, with derivative $\lambda'_r>0$ for all $r$;
\smallskip

\item[(B2)] $[a,b]\ni r\mapsto x_r\in\mathfrak M$ is a map of class $C^1$, and $x_r$ is a critical point of
$\mathcal A_r=\mathcal A_{\lambda_r}$ for all $r$.
\end{itemize}
The assumption $\lambda_r'>0$ in (B1) implies that the image of the map $\lambda$ is an interval $[c,d]$, and
that there exists the inverse function $[c,d]\ni\lambda\mapsto r_\lambda\in[a,b]$.

We say that an instant $\bar r\in[a,b]$ is an \emph{constrained critical orbit bifurcation instant}
if there exists a sequence $r_n$ tending to $\bar r$,  and a sequence $x_n\in\mathfrak M$
tending to $x_{\bar r}$ as $n$ tends to infinity, such that:
\begin{itemize}
\item $\mathrm d\mathcal A_{\lambda_{r_n}}(x_{n})=0$;
\item $x_{n}\not\in\mathcal O(x_{r_n},G)$,
\end{itemize}
for all $n\in\mathds N$. In other words, $\bar r$ is a constrained critical orbit bifurcation
instant if arbitrarily close to $\mathcal O(x_{\bar r},G)$ one can find other constrained critical
orbits that do not belong to the given path of constrained critical orbits.
\smallskip

Under suitable assumptions, degeneracy is a necessary condition for bifurcation.
This is obtained by refining the result of Lemma~\ref{thm:pseudocritical}, as follows:
\begin{prop}\label{thm:bifimpliesdegen}
Consider the variational setup (A1)---(A5), and (B1), (B2).
Let $\bar r\in\left]a,b\right[$ be fixed; assume that $\mathcal O(x_{\bar r},G)$ is
a nondegenerate critical orbit of $\mathcal A_{\lambda_{\bar r}}$ and that (HF-A) holds around
every point of $\mathcal O(x_{\bar r},G)$. If:
\begin{itemize}
\item[(C1)] the (connected component of the identity of the)
isotropy group $G_{x_r}$ is constant for $r$ near $\bar r$;
\item[(C2)] the curve $r\mapsto x_r$ is not tangent to $\mathcal O(x_{\bar r},G)$ at $x_{\bar r}$.
\end{itemize}
Then:
\begin{itemize}
\item[(a)] for $\varepsilon>0$ small enough, the set \[\mathcal O_\varepsilon=\bigcup
\limits_{r\in \left]\bar r-\varepsilon,\bar r+\varepsilon\right[}\{\lambda_r\}\times
\mathcal O(x_r,G)\] is a $C^1$-submanifold of $\R\times\mathfrak M$ of dimension $d+1$, where $d=\mathrm{dim}(G/G_{x_{\bar r}})$;
\smallskip

\item[(b)] for $(\lambda,x)$ near $\big(\lambda_{\bar r},\mathcal O(x_{\bar r},G)\big)$, $x$ is a critical point
of $\mathcal A_{\lambda}$ if and only if $(\lambda,x)\in\mathcal O_\varepsilon$.
\end{itemize}
In particular, constrained critical orbit bifurcation does not occur at $\bar r$.
\end{prop}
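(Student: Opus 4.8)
The plan is to identify $\mathcal O_\varepsilon$, near $(\lambda_{\bar r},x_{\bar r})$, with the submanifold $\mathcal D$ of pseudo-critical points furnished by Lemma~\ref{thm:pseudocritical}, and then to promote this to a neighborhood of the whole orbit by $G$-equivariance. Since conclusions (a), (b) and the non-bifurcation assertion are all $G$-invariant and $\mathcal O(x_{\bar r},G)$ is compact, it suffices to argue in a neighborhood of the single point $x_{\bar r}$ and transport everything by the group action, exactly as in the reduction used in the proof of Proposition~\ref{thm:abstractPS}.

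For (a): by (C1) the stabilizers $G_{x_r}$ have constant dimension for $r$ near $\bar r$, so $\mathrm{dim}\,\mathcal O(x_r,G)=d$ is constant. Using the smoothness of the $G$-action on critical orbits (A5), the $C^1$-regularity of $r\mapsto(\lambda_r,x_r)$ from (B1)--(B2), and a local slice through $x_{\bar r}$, one obtains a $C^1$-map $(g,r)\mapsto(\lambda_r,g\cdot x_r)$ parametrizing a neighborhood of $(\lambda_{\bar r},x_{\bar r})$ in $\mathcal O_\varepsilon$. Its differential at $(e,\bar r)$ carries the $G$-directions onto $\{0\}\times T_{x_{\bar r}}\mathcal O(x_{\bar r},G)$ and the $r$-direction onto $(\lambda_{\bar r}',\dot x_{\bar r})$; since $\lambda_{\bar r}'>0$, the latter vector is automatically independent of the former $d$, so the map has constant rank $d+1$ and the constant-rank theorem realizes $\mathcal O_\varepsilon$ as an embedded $C^1$-submanifold of dimension $d+1$. (One checks in passing that (C2) is actually automatic here: differentiating the Lagrange identity $\mathrm d\mathcal A(x_r)+\lambda_r\,\mathrm d\mathcal V(x_r)=0$ at $\bar r$ gives $\mathrm d^2\mathcal A_{\lambda_{\bar r}}(x_{\bar r})[\dot x_{\bar r},\cdot]=-\lambda_{\bar r}'\,\mathrm d\mathcal V(x_{\bar r})$, which is nonzero by (A4), so $\dot x_{\bar r}$ avoids $\mathrm{Ker}\,\mathrm d^2\mathcal A_{\lambda_{\bar r}}(x_{\bar r})=T_{x_{\bar r}}\mathcal O(x_{\bar r},G)$.)

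For (b): apply Lemma~\ref{thm:pseudocritical} at $x_{\bar r}$ --- legitimate since $\mathcal O(x_{\bar r},G)$ is nondegenerate of dimension $d$ and (HF-A) holds there --- obtaining a neighborhood $W$ of $(\lambda_{\bar r},x_{\bar r})$ and a $(d+1)$-dimensional $C^1$-submanifold $\mathcal D\subset W$ which is exactly the pseudo-critical set inside $W$. Every free critical point of $\mathcal A_\lambda$ is pseudo-critical: $\mathrm d\mathcal A_\lambda(x)=0$ means $H(\lambda,x)$ is $\langle\cdot,\cdot\rangle_0$-orthogonal to the dense subspace $X$, whence $H(\lambda,x)=0$ and a fortiori $H_2(\lambda,x)=0$. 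Since moreover every point of $\mathcal O_\varepsilon$ is a free critical point of the corresponding $\mathcal A_\lambda$ (by $G$-invariance of $\mathcal A_\lambda$), we get, after shrinking $W$, the chain $\mathcal O_\varepsilon\cap W\subseteq\{(\lambda,x)\in W:\mathrm d\mathcal A_\lambda(x)=0\}\subseteq\mathcal D$. Both $\mathcal O_\varepsilon\cap W$ and $\mathcal D$ are embedded $C^1$-submanifolds of dimension $d+1$ through $(\lambda_{\bar r},x_{\bar r})$ with one contained in the other, so by invariance of domain $\mathcal O_\varepsilon\cap W$ is relatively open in $\mathcal D$; shrinking $W$ once more, all three sets in the chain coincide. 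By $G$-equivariance and compactness of the orbit this equality holds on a neighborhood of $\{\lambda_{\bar r}\}\times\mathcal O(x_{\bar r},G)$, which is precisely (b). For the final assertion: were $\bar r$ a bifurcation instant, there would be $r_n\to\bar r$ and $x_n\to x_{\bar r}$ with $\mathrm d\mathcal A_{\lambda_{r_n}}(x_n)=0$ and $x_n\notin\mathcal O(x_{r_n},G)$; for $n$ large, (b) forces $(\lambda_{r_n},x_n)\in\mathcal O_\varepsilon$, i.e.\ $x_n\in\mathcal O(x_{r'},G)$ with $\lambda_{r'}=\lambda_{r_n}$, and strict monotonicity of $r\mapsto\lambda_r$ gives $r'=r_n$, contradicting $x_n\notin\mathcal O(x_{r_n},G)$.

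The main obstacle I anticipate is Part (a): verifying that $\mathcal O_\varepsilon$ is a genuine $C^1$-embedded submanifold of dimension exactly $d+1$. The $G$-action on $\mathfrak M$ is only continuous by (A2), so the necessary differentiability must be extracted from (A5) through a slice, and the constancy of the orbit dimension rests on (C1). Once $\mathcal O_\varepsilon$ and $\mathcal D$ are both known to be $(d+1)$-dimensional submanifolds with $\mathcal O_\varepsilon\subseteq\mathcal D$ locally, their coincidence near $(\lambda_{\bar r},x_{\bar r})$ --- and hence the upgrading of pseudo-critical points to genuine critical points --- is a soft dimension count.
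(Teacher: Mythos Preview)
Your proof follows the same strategy as the paper's: reduce to a neighborhood of $x_{\bar r}$ by equivariance, use Lemma~\ref{thm:pseudocritical} to get the $(d{+}1)$-dimensional pseudo-critical manifold $\mathcal D$, observe that $\mathcal O_\varepsilon\cap W\subset\mathcal D$, and conclude equality by a dimension count. The paper phrases the last step via the continuous foliation of $\mathcal O_\varepsilon$ by $d$-dimensional orbits together with the transversal curve (C2), while you phrase it via invariance of domain; these are equivalent. One organizational difference worth noting: you establish (a) first by a direct constant-rank argument on $(g,r)\mapsto(\lambda_r,g\cdot x_r)$ and then feed it into (b), whereas the paper does not prove (a) independently but rather obtains both (a) and (b) at once from the identification $\mathcal O_\varepsilon\cap W=\mathcal D\cap(\text{open})$, so that the $C^1$-structure of $\mathcal O_\varepsilon$ is \emph{inherited} from $\mathcal D$. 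The paper's ordering is slightly safer, since it sidesteps the joint-$C^1$ issue you correctly flag in your final paragraph (axiom (A5) gives smoothness only orbit-by-orbit). Your side remark that (C2) is actually automatic---differentiating $\mathrm d\mathcal A_{\lambda_r}(x_r)=0$ yields $\mathrm d^2\mathcal A_{\lambda_{\bar r}}(x_{\bar r})[\dot x_{\bar r},\cdot]=-\lambda'_{\bar r}\,\mathrm d\mathcal V(x_{\bar r})\ne0$, forcing $\dot x_{\bar r}\notin\mathrm{Ker}\,\mathrm d^2\mathcal A_{\lambda_{\bar r}}(x_{\bar r})=T_{x_{\bar r}}\mathcal O(x_{\bar r},G)$---is correct and is not observed in the paper.
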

\begin{proof}
By equivariance, it suffices to study the problem in the neighborhood $W$ of $\big(\lambda_{\bar r},x_{\bar r})$
where the thesis of Lemma~\ref{thm:pseudocritical} holds.
The set $\mathcal O_\varepsilon\cap W$ is contained in the $(d+1)$-dimensional $C^1$-submanifold
$\mathcal D$ consisting of pseudo-critical points, whose existence in proven in Lemma~\ref{thm:pseudocritical}.
Moreover, $\mathcal O_\varepsilon\cap W$ is foliated by the sets $\mathcal O(x_{r_\lambda},G)\cap W$,
for $\lambda\in\left]\lambda_{\bar r-\varepsilon},\lambda_{\bar r+\varepsilon}\right[$, that are
submanifolds of dimension $d$ of $\R\times\mathfrak M$ by (A5) and (C1), and it contains the curve
$\lambda\mapsto x_{r_\lambda}$ which not tangent
to the orbit $\mathcal O(x_{\bar r},G)$ by (C2).
The foliation $\lambda\mapsto\mathcal O(x_{r_\lambda},G)$ is continuous, by (C1) and the fact
the action of $G$ on $\mathfrak M$ is continuous.
It follows that $\mathcal O_\varepsilon\cap W=\mathcal D\cap\big(\left]\lambda_{\bar r-\varepsilon},\lambda_{\bar r
+\varepsilon}\right[\times\mathfrak M\big)$ for $\varepsilon>0$ small enough, and the conclusion follows immediately from Lemma~\ref{thm:pseudocritical}.
\end{proof}
\begin{rem}
When $\mathfrak M$ is a Banach space and the action of $G$ is by linear isomorphisms of $\mathfrak M$,
the result can be proven using a $G$-equivariant version of the infinite dimensional Implicit Function
Theorem, as proved for instance in \cite{Dan1, Dan2, ReckPet}, replacing assumption (C1) with an
the algebraic assumption that the \emph{isotropic representation} of $G_{x_{\bar r}}$ on
$T_{x_{\bar r}}\mathcal O(x_{\bar r},G)$ should not have non zero fixed point.
This is also equivalent to the fact that the stabilizer $G_{x_{\bar r}}$ has the same dimension of its normalizer
in $G$.
\end{rem}
\begin{prop}\label{thm:MorseIndexConstant}
Consider the variational setup (A1)---(A5), and (B1), (B2).
Let $I\subset[a,b]$ be an interval such that, for all $r\in I$, $\mathcal O(x_r,G)$ is nondegenerate,
the critical orbit $\mathcal O(x_r,G)$ has constant dimension, and such
that (HF-B) holds around every $x_r$. Then, the strong Morse index $\mathfrak i_{\mathrm{Morse}}^\mathrm{s}\big(\mathcal O(x_r,G)\big)$
is constant on $I$.
\end{prop}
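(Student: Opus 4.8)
The plan is to show that the function $r\mapsto\mu_r:=\mathfrak i_{\mathrm{Morse}}^{\mathrm s}\big(\mathcal O(x_r,G)\big)$ is locally constant on $I$; since $I$ is an interval, this gives the claim. Fix $r_0\in I$, work inside the fixed neighbourhood of $x_{r_0}$ on which (HF-B) applies, and write $S_r$ for the essentially positive self-adjoint operator on $\mathcal H_1$ representing $\mathrm d^2\mathcal A_{\lambda_r}(x_r)$, as provided by (HF-B1). By the remarks following the HF-axioms, $\mu_r$ is finite and equals the total multiplicity of the strictly negative part of the spectrum $\mathfrak s(S_r)$. Moreover, since $X$ is dense in $\mathcal H_1$ and, by (HF-B2), the $0$-eigenspace of $S_r$ is contained in $X\cong T_{x_r}\mathfrak M$, the nondegeneracy of $\mathcal O(x_r,G)$ forces $\ker S_r=T_{x_r}\mathcal O(x_r,G)$, so $\dim\ker S_r=d$ for every $r\in I$.

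First I would use that $S_{r_0}$, being essentially positive, is bounded below and has $\mathfrak s(S_{r_0})\cap\left]-\infty,\delta\right]$ equal to a finite set of eigenvalues of finite multiplicity; hence I can fix $\varepsilon>0$ so small that $\mathfrak s(S_{r_0})$ meets neither $[-2\varepsilon,0)$ nor $(0,2\varepsilon]$, so that the negative eigenvalues of $S_{r_0}$ lie in a compact interval $[-N,-2\varepsilon]$ and $\mathfrak s(S_{r_0})\cap\left]-\infty,\varepsilon\right]$ consists of those eigenvalues together with $0$. Since $r\mapsto(\lambda_r,x_r)$ is $C^1$ and $S_{\lambda,x}$ depends continuously on $(\lambda,x)$ in operator norm by (HF-B3), $S_r\to S_{r_0}$ in norm as $r\to r_0$. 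Now run the Riesz spectral-projection argument: choose rectangular contours $\Gamma_-$ and $\Gamma_+$, symmetric about the real axis, whose real traces are the intervals $J_-=\left]-N-1,-\varepsilon\right[$ and $J_+=\left]-N-1,\varepsilon\right[$. These are compact sets disjoint from $\mathfrak s(S_{r_0})$, hence, after shrinking the neighbourhood of $r_0$ (also keeping $\mathfrak s(S_r)$ uniformly bounded below by $-N-1$), disjoint from $\mathfrak s(S_r)$ as well; the associated Riesz projections $\Pi^{\pm}_r=\frac1{2\pi i}\oint_{\Gamma_{\pm}}(z-S_r)^{-1}\,\mathrm dz=\chi_{J_{\pm}}(S_r)$ then depend norm-continuously on $r$. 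Since norm-close projections have equal rank, $\mathrm{rank}\,\Pi^-_r=\mathrm{rank}\,\Pi^-_{r_0}=\mu_{r_0}$ and $\mathrm{rank}\,\Pi^+_r=\mathrm{rank}\,\Pi^+_{r_0}=\mu_{r_0}+d$ for all $r$ near $r_0$.

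To finish I would combine these with the inclusions of spectral subspaces $\mathrm{Ran}\,\chi_{J_-}(S_r)\subseteq\mathrm{Ran}\,\chi_{\left]-\infty,0\right[}(S_r)$ and $\mathrm{Ran}\,\chi_{\left]-\infty,0\right]}(S_r)\subseteq\mathrm{Ran}\,\chi_{J_+}(S_r)$, valid for $r$ near $r_0$. The first inclusion yields $\mu_{r_0}\le\mu_r$; the second, together with $\mathrm{rank}\,\chi_{\left]-\infty,0\right]}(S_r)=\mu_r+\dim\ker S_r=\mu_r+d$ — this is precisely where the nondegeneracy of $\mathcal O(x_r,G)$ is used, together with the constant-dimension hypothesis which pins $\dim\ker S_r$ to $d$ for all $r\in I$ — yields $\mu_r+d\le\mu_{r_0}+d$, i.e.\ $\mu_r\le\mu_{r_0}$. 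Hence $\mu_r=\mu_{r_0}$ for $r$ near $r_0$, and constancy on the connected set $I$ follows. I expect the only genuinely delicate point to be the norm-continuity of the Riesz projections: this requires reading the continuity in (HF-B3) as norm-continuity of the \emph{bounded} operators $S_r$ (legitimate, since essential positivity forces each $S_r$ to be bounded) and controlling the size of the neighbourhood of $r_0$ so that the two fixed contours remain in the resolvent set of every $S_r$; the rest of the argument is soft.
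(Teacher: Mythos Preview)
Your argument is correct and is essentially the same as the paper's: both hinge on the facts that the index of an essentially positive operator is lower semi-continuous while index plus nullity is upper semi-continuous, combined with the observation that nondegeneracy and the constant orbit dimension pin the nullity $\dim\ker S_r$ to $d$ throughout $I$. The only difference is one of packaging: the paper quotes these two semi-continuity statements as a black box (citing \cite{JavMasPic}), whereas you prove them on the spot via Riesz spectral projections and the stability of ranks under small norm perturbations. Your version is more self-contained; the paper's is terser but relies on an external reference.
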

\begin{proof}
The integer valued functions \emph{index} and \emph{index plus nullity} are respectively lower and upper
semi-continuous in the space of essentially positive Fredholm operators on a Hilbert space
(see for instance \cite[Corollary~2.8]{JavMasPic}).
Our assumptions imply that the nullity of the second variation of the functionals $\mathcal A_{\lambda_r}$
 is constant on $I$. Thus, the index function $\mathfrak i_{\mathrm{Morse}}^\mathrm{s}\big(\mathcal O(x_r,G)\big)$
 is both lower and upper semi-continuous on $I$, hence constant.
 \end{proof}
Degeneracy is only a necessary condition for orbit bifurcation. In order to guarantee bifurcation
a sufficient condition is that the degeneracy occurs at an instant $\bar r$ where the critical groups
of the orbit $\mathcal O(x_{\bar r},G)$ have a discontinuity. Under assumptions (C1) and (C2), such
discontinuity occurs at the instants where the strong Morse index has a jump.
There are several results available for equivariant bifurcation, but they don't quite fit
into our framework;  for instance, in \cite{SmoWas} it is studied equivariant bifurcation from
a branch of isolated critical points, i.e., from critical orbits consisting of just one point.
In view to our application, in which  some global regularity assumptions will be dropped, it will
be convenient to give a proof using a non smooth approach.
\begin{teo}\label{thm:maincriticalorbitbifurcation}
Consider the variational setup (A1)---(A5), and (B1), (B2); for $r\in[a,b]$ set:
\[\mu_r=\mathfrak i_{\mathrm{Morse}}^\mathrm{s}\big(\mathcal O(x_r,G)\big).\]

Let $\bar r\in\left]a,b\right[$ be a given instant, and assume the following:
\begin{itemize}
\item axiom (HF-A) holds at all points of $\mathcal O(x_{\bar r},G)$;
\item axiom (HF-B) holds in a neighborhood of $\mathcal O(x_{\bar r},G)$;
\item assumptions (C1) and (C2) hold;
\item[(D1)] for $r\ne\bar r$ the critical orbit $\mathcal O(x_r,G)$ is nondegenerate;
\item[(D2)]
\label{itm:assjumpmorseindex} for $\varepsilon>0$ small, $\mu_{\bar r-\varepsilon}\ne\mu_{\bar r+\varepsilon}$.
\end{itemize}
Then, $\bar r$ is a constrained critical orbit bifurcation instant.
\end{teo}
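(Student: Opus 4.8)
The plan is to argue by contradiction, using the \emph{absence} of bifurcation to force the critical groups of the orbits $\mathcal O(x_r,G)$ to stay constant across $\bar r$, and then to contradict the jump (D2) of the strong Morse index by means of the computation in Proposition~\ref{thm:computCritGroups}.

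\emph{Setting up the contradiction.} Suppose $\bar r$ is not a constrained critical orbit bifurcation instant. By the very definition of bifurcation instant, together with the $G$-equivariance of $\mathcal A_{\lambda_r}$ and the compactness of the orbit, there are then $\varepsilon_0>0$ and an open neighborhood $W$ of $\mathcal O(x_{\bar r},G)$ in $\mathfrak M$ such that, for every $r\in\left]\bar r-\varepsilon_0,\bar r+\varepsilon_0\right[$, the set of critical points of $\mathcal A_{\lambda_r}$ lying in $W$ is \emph{exactly} $\mathcal O(x_r,G)$. Shrinking $\varepsilon_0$ and $W$, and exploiting the compactness of $\mathcal O(x_{\bar r},G)$ together with the openness of the set of index-zero Fredholm operators (so that (HF-A) and (HF-B), continuous in $\lambda$ and $x$, survive), I may assume in addition that (HF-A) holds at every point of $\mathcal O(x_r,G)$, that (HF-B) holds around every point of $\mathcal O(x_r,G)$, that $\mathcal O(x_r,G)\subset W$, and --- by (C1) --- that $d:=\dim\mathcal O(x_r,G)$ is independent of $r$. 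Thus, for each such $r$, $\mathcal O(x_r,G)$ is an \emph{isolated} critical set of $\mathcal A_{\lambda_r}$ inside $W$; the family $r\mapsto\mathcal O(x_r,G)$ varies continuously (it is the $G$-saturation of the $C^1$-curve $r\mapsto x_r$), and by (C1) all these orbits are $G$-equivariantly homeomorphic, hence of constant topological type.

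\emph{Invariance of the critical groups.} By Proposition~\ref{thm:abstractPS}, after a further shrinking of $W$ and $\varepsilon_0$ I may assume that $\mathcal A_{\lambda_r}$ satisfies the Palais--Smale condition in $W$ for every $r\in\left]\bar r-\varepsilon_0,\bar r+\varepsilon_0\right[$. The deformation constructions underlying Morse theory are available in this Banach setting: a Palais--Smale pseudo-gradient flow can be built from the gradient map $H$ of (HF-A), and the $G$-orbit structure near $\mathcal O(x_r,G)$ is handled through the slice theorem, exactly as in the proof of Proposition~\ref{thm:computCritGroups}. Since inside $W$ the critical set of $\mathcal A_{\lambda_r}$ reduces to the single isolated orbit $\mathcal O(x_r,G)$, of constant topological type, depending continuously on $r$, while $\mathcal A_{\lambda_r}$ (and its critical value $\mathcal A_{\lambda_r}(x_r)$) likewise depends continuously on $r$, the standard \emph{continuation} (stability) property of critical groups yields that
\[\mathfrak H_*\big(\mathcal O(x_r,G),\lambda_r;\mathds Z_2\big)\qquad\text{does not depend on }r\in\left]\bar r-\varepsilon_0,\bar r+\varepsilon_0\right[.\]
For critical \emph{submanifolds} (rather than isolated critical points) a complete proof --- valid even for non-smooth functionals on metric spaces, which is the generality required in the geometric application --- is the one in \cite{DeVita} (see also \cite{Chang,Chang2}). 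In particular, for every $\varepsilon\in\left]0,\varepsilon_0\right[$ one obtains the isomorphism $\mathfrak H_*\big(\mathcal O(x_{\bar r-\varepsilon},G),\lambda_{\bar r-\varepsilon};\mathds Z_2\big)\cong\mathfrak H_*\big(\mathcal O(x_{\bar r+\varepsilon},G),\lambda_{\bar r+\varepsilon};\mathds Z_2\big)$.

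\emph{Contradiction with the jump (D2).} Fix $\varepsilon\in\left]0,\varepsilon_0\right[$. For $r=\bar r\pm\varepsilon$ the orbit $\mathcal O(x_r,G)$ is nondegenerate by (D1), and (HF-A), (HF-B) hold around it, so Proposition~\ref{thm:computCritGroups} applies:
\[\mathfrak H_\nu\big(\mathcal O(x_r,G),\lambda_r;\mathds Z_2\big)\cong H_{\nu-\mu_r}\big(\mathcal O(x_r,G);\mathds Z_2\big),\qquad \nu\in\mathds N.\]
Now $\mathcal O(x_r,G)$ is a closed manifold of dimension $d$, connected because $G$ is connected; hence, with $\mathds Z_2$-coefficients, $H_0\cong H_d\cong\mathds Z_2$ and $H_k=0$ for $k<0$ or $k>d$. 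Therefore the graded group $\mathfrak H_*\big(\mathcal O(x_r,G),\lambda_r;\mathds Z_2\big)$ is nonzero precisely in the degrees $\mu_r,\mu_r+1,\dots,\mu_r+d$; in particular its lowest nonzero degree equals $\mu_r$. Comparing the two sides of the isomorphism of the previous step forces $\mu_{\bar r-\varepsilon}=\mu_{\bar r+\varepsilon}$, contradicting (D2). Hence $\bar r$ must be a constrained critical orbit bifurcation instant. I expect the genuinely delicate point to be the continuation step: one must verify that the classical invariance of critical groups persists when $\mathfrak M$ is merely a Banach manifold, the critical set is a moving $G$-orbit rather than an isolated point, and the functionals themselves depend on the parameter $r$ --- precisely the combination covered by the pseudo-gradient coming from $H$, the slice decomposition used in Proposition~\ref{thm:computCritGroups}, and the metric-space stability result of \cite{DeVita}.
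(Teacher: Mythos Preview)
Your proof is correct and follows essentially the same strategy as the paper's: assume no bifurcation, invoke the local Palais--Smale condition (Proposition~\ref{thm:abstractPS}) and the stability of critical groups for isolated critical submanifolds (\cite{CorvHant,DeVita}), and contradict the computation of Proposition~\ref{thm:computCritGroups}. The only cosmetic difference is that the paper picks out the specific degree $d+\mu_+$ (where $H_d(\mathcal O;\mathds Z_2)\cong\mathds Z_2$) to exhibit the discrepancy, whereas you read off $\mu_r$ as the lowest nonzero degree; note, by the way, that your claim ``nonzero \emph{precisely} in degrees $\mu_r,\dots,\mu_r+d$'' is a slight overstatement (intermediate homology of the orbit may vanish), but your actual inference --- that the lowest nonzero degree is $\mu_r$ --- is correct and suffices.
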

\begin{proof}
Note that, by Proposition~\ref{thm:MorseIndexConstant},
$\mu_r$ is constant on the left and on the right of $\bar r$, and thus assumption
(D2) does not depend on the choice of $\varepsilon$ small enough.
By Proposition~\ref{thm:abstractPS}, for $r$ near $\bar r$ the functional $\mathcal A_{\lambda_r}$ satisfies
the Palais--Smale condition on a closed neighborhood of the orbit $\mathcal O(x_{\bar r},G)$.
Moreover, the assumption (D2) on the jump of the Morse index and the fact that, by (C1),
the critical orbits $\mathcal O_r=\mathcal O(x_{r},G)$, have the same dimension $d$ (in fact,
they are all diffeomorphic if the whole isotropy group $G_{x_r}$ is constant near $\bar r$),
we have that the sequence of critical groups
$\mathfrak H_*\big(\mathcal O_r,\mathds Z_2\big)$ has a jump at $r=\bar r$. Namely, set $\mu_\pm=\mu_{\bar r\pm\varepsilon}$,
and assume $\mu_+>\mu_-$.
Then, using Proposition~\ref{thm:computCritGroups}, for $\varepsilon>0$ small enough we have:
\begin{equation}\label{eq:contradict1}
\mathfrak H_{d+\mu_+}\big(\mathcal O_{\bar r+\varepsilon},\mathds Z_2\big)\cong H_d(\mathcal O_{\bar r+\varepsilon},\mathds Z_2)\cong\mathds Z_2,
\end{equation}
while
\begin{equation}\label{eq:contradict2}
\mathfrak H_{d+\mu_+}\big(\mathcal O_{\bar r-\varepsilon},\mathds Z_2\big)
\cong H_{d+\mu_+-\mu_-}(\mathcal O_{\bar r-\varepsilon},\mathds Z_2)=\{0\}.
\end{equation}
Similarly, if $\mu_+<\mu_-$, then $\mathfrak H_{d+\mu_-}\big(\mathcal O_{\bar r+\varepsilon},\mathds Z_2\big)=0$
while $\mathfrak H_{d+\mu_-}\big(\mathcal O_{\bar r-\varepsilon},\mathds Z_2\big)\cong\mathds Z_2$.

Set $\bar\lambda=\lambda_{\bar r}$ and let $\varepsilon'$ be such that
$[\bar\lambda-\varepsilon',\bar\lambda+\varepsilon']\subset[\lambda_{\bar r-\varepsilon},\lambda_{\bar r+\varepsilon}]$.
An equivariant version\footnote{A complete proof of a stability result for the critical groups
in the case of critical submanifolds, that can be adapted to our situation, is found in the unpublished work \cite{DeVita}.} of the stability result for critical groups given in
\cite[Theorem~5.2]{CorvHant} says that, when the local Palais--Smale condition holds in some closed
neighborhood $W$ of $\mathcal O_{\bar r}$, if the following two assumptions are satisfied:
\begin{itemize}
\item the only critical orbit of $\mathcal A_\lambda$ in $W$ is $\mathcal O_{r_\lambda}$
for $\lambda\in[\bar\lambda-\varepsilon',\bar\lambda+\varepsilon']$;
\item the critical orbits $\mathcal O_{{\bar r-\varepsilon}}$ and $\mathcal O_{{\bar r+\varepsilon}}$
are nondegenerate,
\end{itemize}
then
\begin{equation}\label{eq:stabilitycritgroups}
\mathfrak H_*\big(\mathcal O_{{\bar r-\varepsilon}},\mathds Z_2\big)=
\mathfrak H_*\big(\mathcal O_{{\bar r+\varepsilon}},\mathds Z_2\big).
\end{equation}
Clearly, \eqref{eq:stabilitycritgroups} is in contradiction with \eqref{eq:contradict1}
and \eqref{eq:contradict2}, which shows that, for every sufficiently small
closed neighborhood of $\mathcal O_{\bar r}$ and every sufficiently small $\varepsilon'>0$,
some $\mathcal A_\lambda$, with $\lambda\in[\bar\lambda-\varepsilon', \bar\lambda+\varepsilon']$, admits a critical orbit contained in $W$ and
distinct from $\mathcal O_{r_\lambda}$. This means that critical orbit bifurcation occurs at $r=\bar r$.
\end{proof}
Observe that, by \eqref{eq:wsindices}, assumption~(D2) in Theorem~\ref{thm:maincriticalorbitbifurcation}
holds if the weak Morse index has a jump of at least two at $\bar r$, i.e., if:
\[\Big\vert\mathfrak i_{\mathrm{Morse}}^\mathrm{w}\big(\mathcal O(x_{\bar r-\varepsilon},G)\big)
- \mathfrak i_{\mathrm{Morse}}^\mathrm{w}\big(\mathcal O(x_{\bar r+\varepsilon},G)\big)\Big\vert\ge2.\]

Theorem~\ref{thm:maincriticalorbitbifurcation} will in fact be used in the present paper
under the slightly weaker assumption that the manifold $\mathfrak M$ has only a local differentiable
structure defined by an atlas of charts that are continuously compatible, and that the
functions $\mathcal A$ and $\mathcal V$ are smooth in these local charts.
The purely topological nature of the global aspects of the proof of Theorem~\ref{thm:maincriticalorbitbifurcation},
namely, the computation of the local Morse invariants (Proposition~\ref{thm:computCritGroups})
and the stability result for critical groups, makes it clear that the result of
Theorem~\ref{thm:maincriticalorbitbifurcation} holds under these more general assumptions.

\end{section}
\begin{section}{On the variational problem of CMC hypersurfaces}
\label{sec:varproblem}
Let us formalize the question of obtaining constant mean curvature hypersurfaces of a Riemannian
manifold as critical points of the area functional restricted to \emph{volume preserving} variations,
in the spirit of \cite{BardoCEsc}.
Let $(N,g)$ be a connected Riemannian manifold, and let $M$ be a connected
compact differentiable manifold
with $\mathrm{dim}(N)=\mathrm{dim}(M)+1$. We will assume for simplicity that $M$ and $N$ are oriented, although
the entire theory can be developed also in the non orientable case. Let $\mathrm{vol}_g$ denote the
volume form on $N$ associated to the metric $g$.
\subsection{The manifold of unparameterized embeddings}
\label{sub:unparameterizedembeddings}
Let $k\ge2$ and $\alpha\in\left]0,1\right[$ be fixed. Consider the smooth Banach manifold
$C^{k,\alpha}(M,N)$ of all maps $x:M\to N$ of H\"older class $C^{k,\alpha}$, and
let $\mathrm{Emb}(M,N)$ be the open subset of $C^{k,\alpha}(M,N)$ consisting of embeddings.
For the main results of the present paper, we will consider the case $k=2$.
The set $\widetilde{\mathrm{Emb}}(M,N)$ is the set of equivalence classes $\big\{[x]:x\in\mathrm{Emb}(M,N)\big\}$,
where two embeddings $x,y\in\mathrm{Emb}(M,N)$ are equivalent if there exists a diffeomorphism
$\phi:M\to M$ such that $x=y\circ\phi$. Elements of $\widetilde{\mathrm{Emb}}(M,N)$ are called
\emph{unparameterized embeddings} of class $C^{k,\alpha}$ of $M$ into $N$.
The topology of $\widetilde{\mathrm{Emb}}(M,N)$ is the quotient topology induced from ${\mathrm{Emb}}(M,N)$.
The geometrical structure of $\widetilde{\mathrm{Emb}}(M,N)$ is studied in \cite{AliasPiccione2010}.
Let us recall here that $\widetilde{\mathrm{Emb}}(M,N)$ has a natural atlas of charts that makes into
an infinite dimensional topological Banach manifold. The charts of this atlas are of the following form.
Given a \emph{smooth} (i.e., $C^\infty$) embedding $x:M\to N$, there exists an open neighborhood
$\widetilde{\mathcal U}_x$ of $[x]$ in $\widetilde{\mathrm{Emb}}(M,N)$ and an open neighborhood ${\mathcal W}_x$
of the zero section of the Banach space of $C^{k,\alpha}$ section of the normal bundle $x^\perp$ of $x$,
and a bijection $\widetilde\Phi_x:\widetilde{\mathcal U}_x\to\widetilde{\mathcal W}_x$ defined by
$\widetilde\Phi_x\big([y]\big)=u$, where the map $z:M\to N$ given by $z(p)=\exp_{x(p)}\big(u(p)\big)$, $p\in M$,
is an embedding of class $C^{k,\alpha}$ which is equivalent to $y$.

As $x$ runs in the set of smooth embeddings, the maps $\widetilde\Phi_x$ form an atlas of charts for $\widetilde{\mathrm{Emb}}(M,N)$
that are continuously compatible. If $F:\mathrm{Emb}(M,N)\to\R$ is a smooth function which is invariant
by reparamaterization, i.e., $F(x\circ\phi)=F(x)$ for all diffeomorphism $\phi:M\to M$, then
the induced map $\widetilde F:\widetilde{\mathrm{Emb}}(M,N)\to\R$ is \emph{smooth in every local chart},
i.e., $\widetilde F\circ\widetilde\Phi_x^{-1}$ is smooth for all $x$.
Thus, one has a well defined notion of \emph{critical point} for a smooth function $\widetilde F$
on $\widetilde{\mathrm{Emb}}(M,N)$, as well as a natural notion of second derivative
$\mathrm d^2\widetilde F([x_0])$ at a critical point $[x_0]$.

Moreover, the smooth action of the isometry group of $(N,g)$ on $\mathrm{Emb}(M,N)$ by right composition
induces a continuous action on $\widetilde{\mathrm{Emb}}(M,N)$. Given a smooth $x$, the $[x]$-orbit of
this action is a smooth submanifold of $\widetilde{\mathrm{Emb}}(M,N)$ in local charts.

Details are found in \cite{AliasPiccione2010}.

\subsection{The area functional}
Given an embedding $x:M\to N$, one can define the \emph{area} $\mathcal A(x)$ of $x$ as the volume
of $M$ relatively to the volume form $x^*(\mathrm{vol}_g)$, which is the pull-back of $\mathrm{vol}_g$ by $x$:
\[\mathcal A(x)=\int_Mx^*(\mathrm{vol}_g).\]
If an auxiliary Riemannian metric $h$ is fixed on $M$, then $\mathcal A(x)$ can be written more explicitly as:
\begin{equation}\label{eq:areafunctional}
\mathcal A(x)=\int_M\left(\mathrm{det}\big(\mathrm dx(p)^*\mathrm dx(p)\big)\right)^{1/2}\,\mathrm{vol}_h,
\end{equation}
where $\mathrm dx(p)^*$ is the adjoint of the linear map $\mathrm dx(p):T_pM\to T_{x(p)}N$, adjoint taken
relatively to the scalar products $h_p$ on $T_pM$ and $g_{x(p)}$ on $T_{x(p)}N$.
Note that $\mathcal A$ can be seen as a functional on the manifold $\mathrm{Emb}(M,N)$, and
it is invariant by the group of diffeomorphisms of $M$; thus, $\mathcal A$ gives a well
defined functional on the quotient space $\widetilde{\mathrm{Emb}}(M,N)$,
still denoted by $\mathcal A$ with a slight abuse of notations.
Using the local charts of $\widetilde{\mathrm{Emb}}(M,N)$ described above,
$\mathcal A$ is a smooth function in the neighborhood of every smooth embedding.
More precisely:
\begin{prop}\label{thm:critpointsA}
Let $x$ be a $C^\infty$ embedding of $M$ into $N$; let $\big(\widetilde{\mathcal U}_x,\widetilde\Phi_x\big)$
be the local chart of $\widetilde{\mathrm{Emb}}(M,N)$ around $[x]$ described in Subsection~\ref{sub:unparameterizedembeddings}.
Then, the map $\mathcal A_x=\mathcal A\circ\widetilde\Phi_x^{-1}:\widetilde\Phi_x
\big(\widetilde{\mathcal U}_x\big)\to\R^+$
is smooth, and $u$ is a critical point of this functional if and only if $\widetilde\Phi_x^{-1}(u)$
is the class $[y]\in\widetilde{\mathcal U}_x\subset\widetilde{\mathrm{Emb}}(M,N)$
of a \emph{minimal} embedding $y:M\to N$.
\end{prop}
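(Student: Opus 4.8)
The plan is to unwind the definition of the chart $\widetilde\Phi_x$ and reduce the statement to the classical first variation formula for the area functional. For $u$ in the domain of the chart write $z_u\colon M\to N$, $z_u(p)=\exp_{x(p)}\big(u(p)\big)$, so that $\widetilde\Phi_x^{-1}(u)=[z_u]$ and, by \eqref{eq:areafunctional},
\[
\mathcal A_x(u)=\mathcal A(z_u)=\int_M\Big(\det\big(\mathrm dz_u(p)^*\mathrm dz_u(p)\big)\Big)^{1/2}\,\mathrm{vol}_h .
\]
For smoothness: since $x$ and $\exp$ are $C^\infty$, the substitution operator $u\mapsto z_u$ is a smooth map from the Banach space $C^{k,\alpha}(x^\perp)$ to $C^{k,\alpha}(M,N)$ (on H\"older spaces, composition with a fixed smooth map is a smooth nonlinear operator); hence $u\mapsto j^1z_u$ is smooth into the $C^{k-1,\alpha}$ space of $1$-jets, the integrand is obtained from it by applying the fixed $C^\infty$ function $A\mapsto(\det A^*A)^{1/2}$, which is smooth and positive on the open set of injective linear maps (and $z_u$ is an immersion throughout the chart), and finally $f\mapsto\int_Mf\,\mathrm{vol}_h$ is a bounded linear functional. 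Composing these, $\mathcal A_x$ is smooth.

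Next I would compute $\mathrm d\mathcal A_x(u_0)$ at an arbitrary point $u_0$ of the chart. Let $z=z_{u_0}$ be the corresponding $C^{k,\alpha}$ embedding. For $v\in C^{k,\alpha}(x^\perp)$ the curve $t\mapsto z_{u_0+tv}$ is a variation of $z$ whose variational field is $W_v(p)=\mathrm d(\exp_{x(p)})_{u_0(p)}\big[v(p)\big]\in T_{z(p)}N$. By the classical first variation of area, and because $M$ is closed so that the integral of the tangential divergence term vanishes,
\[
\mathrm d\mathcal A_x(u_0)[v]=-\int_M\big\langle\vec H_z,\,W_v^{\perp}\big\rangle\,\mathrm{vol}_{z^*g},
\]
where $\vec H_z$ is the mean curvature vector of the embedding $z$ and $W_v^{\perp}$ is the component of $W_v$ normal to $z(M)$. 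Thus $u_0$ is a critical point of $\mathcal A_x$ if and only if $\vec H_z$ is $L^2$-orthogonal to every normal field of the form $W_v^{\perp}$, $v\in C^{k,\alpha}(x^\perp)$.

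It remains to show that $v\mapsto W_v^{\perp}$ is onto $C^{k,\alpha}(z^\perp)$; this is the only point that is not completely routine, and I expect it to be the main obstacle. Both $x^\perp$ and $z^\perp$ are line bundles, since $N$ has one more dimension than $M$. Fiberwise, $v(p)\mapsto W_v^{\perp}(p)$ is the restriction of the linear isomorphism $\mathrm d(\exp_{x(p)})_{u_0(p)}$ to the line $x^\perp_p$, followed by orthogonal projection onto the line $z^\perp_p$; this composition is an isomorphism precisely when $\mathrm d(\exp_{x(p)})_{u_0(p)}\big(x^\perp_p\big)$ is not tangent to $z(M)$ at $z(p)$. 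Shrinking the neighborhood $\mathcal W_x$ (equivalently $\widetilde{\mathcal U}_x$) we may assume $u_0$ is $C^1$-small, so that $z$ is $C^1$-close to $x$ and $\mathrm d\exp$ is $C^0$-close to the identity; then this transversality holds uniformly on $M$, and $v\mapsto W_v^{\perp}$ is a Banach space isomorphism $C^{k,\alpha}(x^\perp)\to C^{k,\alpha}(z^\perp)$. Consequently $u_0$ is critical for $\mathcal A_x$ if and only if $\int_M\langle\vec H_z,\zeta\rangle\,\mathrm{vol}_{z^*g}=0$ for all $\zeta\in C^{k,\alpha}(z^\perp)$; since $\vec H_z$ is continuous and $C^{k,\alpha}$ sections are dense in the $L^2$ sections of $z^\perp$, this forces $\vec H_z\equiv0$, i.e. $z$ — hence its reparametrization $y$, minimality being a property of the unparameterized embedding — is a minimal embedding. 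Conversely, if $y$ (equivalently $z$) is minimal then $\vec H_z\equiv0$ and the displayed formula gives $\mathrm d\mathcal A_x(u_0)=0$, which completes both implications.
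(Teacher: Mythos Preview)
Your argument is correct and follows the same underlying idea as the paper---smoothness via composition of a nonlinear first order differential operator with the linear integration functional, and the critical point characterization via the first variation of area---but you supply considerably more detail than the paper does. The paper's proof is essentially a sketch: for smoothness it invokes the general results of \cite{AliasPiccione2010} on the chart structure of $\widetilde{\mathrm{Emb}}(M,N)$, and for the critical point statement it simply cites \cite{Lawson}, mentioning explicitly only the case $u=0$. Your treatment of a general $u_0$ in the chart, and in particular the verification that $v\mapsto W_v^{\perp}$ is a bundle isomorphism $C^{k,\alpha}(x^\perp)\to C^{k,\alpha}(z^\perp)$ (possibly after shrinking $\mathcal W_x$), is precisely what is needed to make the ``well known'' assertion rigorous away from $u=0$; this point is left implicit in the paper. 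One small remark: the transversality you need is in fact built into the construction of the chart (it is what makes $u\mapsto[z_u]$ injective on a neighborhood of the zero section), so the shrinking step is not really an extra hypothesis but rather part of how $\widetilde{\mathcal U}_x$ is chosen in the first place.
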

\begin{proof}
First, note that the functional $\mathcal A$ is smooth on $\mathrm{Emb}(M,N)$. Namely,
in the local charts described in Subsection~\ref{sub:unparameterizedembeddings}, $\mathcal A$ is given by the composition
of a nonlinear first order differential operator defined on $C^{k,\alpha}$-sections of $x^*(TN)$
with the linear map \[C^0(M,\R)\ni f\longmapsto\int_Mf\mathrm{vol}_h\in\R.\]
Second, observe that $\mathcal A$ is invariant by diffeomorphisms of $M$, i.e., the area of an embedding does not depend on its parametrization.
Smoothness of $\mathcal A_x$ follows now easily using the results of \cite{AliasPiccione2010}.
It is well known (see for instance \cite{Lawson}) that $0$ is a critical point of $\mathcal A_x$
if and only if $x$ is a minimal embedding.
\end{proof}
\subsection{Volume of a region with boundary $\mathbf{x(M)}$.}
Let us now look at the variational problem of constant mean curvature embeddings.
Before we go into the general case, let us first discuss an instructive problem of
establishing when a given embedding $x:M\to N$ has image which is the boundary of
an open subset of $N$. Equivalently, denoting by $M_0\subset N$ the image $x(M)$,
we want to know when the open set $N\setminus M_0$ has two connected components.
First, observe that a necessary condition for this is that $M_0$ is \emph{transversally oriented}
in $N$, i.e., the normal bundle $TM_0^\perp$ must be orientable. Thus, let us assume
that $M_0$ is transversally oriented.

The number of connected components is the rank of the free abelian group $\widetilde H_0(N\setminus M_0)$
plus $1$, where $\widetilde H_0$ denotes the reduced singular homology group.
The long exact reduced homology sequence of the pair $(N,N\setminus M_0)$ gives:
\begin{equation}\label{eq:longexactseq}
H_1(N)\longrightarrow H_1(N,N\setminus M_0)\longrightarrow\tilde H_0(N\setminus M_0)\longrightarrow
\tilde H_0(N)=0.
\end{equation}
Since $M_0$ is closed in $N$, we can use excision and replace $N$ with a tubular neighborhood of
$M_0$ in the term $H_1(N,N\setminus M_0)$; it follows that $H_1(N,N\setminus M_0)$ is the same as the
relative homology $H_1(TM_0^\perp,TM_0^\perp\setminus\mathbf0)$, where $TM_0^\perp$ is the normal bundle
of $M_0$ and $\mathbf 0$ is its zero section. The calculation of homology of vector bundles
is well known, see for instance \cite{Spanier}; since $M_0$ is transversally oriented, i.e.,
the vector bundle $TM_0^\perp$ is orientable, then $H_1(TM_0^\perp,TM_0^\perp\setminus\mathbf0)$
is isomorphic to $H_0(M_0)\cong\mathds Z$. Thus, we have an exact sequence:
\[H_1(N)\longrightarrow\mathds Z\longrightarrow\widetilde H_0(N\setminus M_0)\longrightarrow0.\]
A generator of the group $\mathds Z$ above is a curve in $N$ that intercepts once and transversally
$M_0$. Since $\widetilde H_0(N\setminus M_0)$ is free, the only options for the image of the
map $H_1(N)\to\mathds Z$ are that this image is either all $\mathds Z$ or zero. When this image
is $\mathds Z$, then $N\setminus M_0$ is connected; when the image is zero, then $N\setminus M_0$
has two connected components. Thus, we have the following:
\begin{lem}
Let $M$ and $N$ be compact connected manifolds, with $\mathrm{dim}(N)=\mathrm{dim}(M)+1$.
Then, the set:
\[\widetilde{\mathrm{Emb}}^o(M,N)\!=\!\Big\{[x]\in\widetilde{\mathrm{Emb}}(M,N):
N\setminus x(M)\ \text{has two connected components}\!\Big\}\]
is open in $\widetilde{\mathrm{Emb}}(M,N)$.
\end{lem}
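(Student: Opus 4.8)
The plan is to show that the condition ``$N\setminus x(M)$ has two connected components'' is stable under small $C^{k,\alpha}$-perturbations of the embedding, so that $\widetilde{\mathrm{Emb}}^o(M,N)$ is a union of basic open sets in the quotient topology. Fix $[x_0]\in\widetilde{\mathrm{Emb}}^o(M,N)$ and write $M_0=x_0(M)$. By the homological discussion preceding the statement, the fact that $N\setminus M_0$ has two components is equivalent to the vanishing of the connecting map $H_1(N)\to H_1(N,N\setminus M_0)\cong\mathds Z$, where the generator of $\mathds Z$ is realized by a loop in $N$ meeting $M_0$ once transversally; equivalently, every closed loop in $N$ has even algebraic intersection number with $M_0$ (with $\mathds Z_2$-coefficients this is the statement that $[M_0]=0$ in $H_m(N;\mathds Z_2)$, which is also the cleanest formulation to perturb).

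First I would observe that an embedding $y$ that is $C^1$-close (in particular $C^{k,\alpha}$-close with $k\ge 2$) to $x_0$ has image $M_1=y(M)$ contained in a fixed tubular neighborhood $\mathcal T$ of $M_0$, and is isotopic to $x_0$ through embeddings with image in $\mathcal T$; indeed, using a chart $\widetilde\Phi_{x}$ as in Subsection~\ref{sub:unparameterizedembeddings} built from a smooth embedding $x$ near $x_0$, the class $[y]$ is represented by a normal exponential graph over $x$ with small $C^{k,\alpha}$-section, and the linear homotopy of that section gives the desired isotopy. Next I would use that the algebraic intersection number of a loop $\gamma$ in $N$ (chosen transverse to $M_0$ and disjoint from $\partial\mathcal T$) with the image of an embedding is invariant under such an isotopy, being a homotopy/homology invariant: $\gamma\cdot M_1=\gamma\cdot M_0$ for all loops $\gamma$. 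Hence the connecting homomorphism $H_1(N)\to\mathds Z$ associated to $M_1$ is the zero map as well, and by the exact sequence $H_1(N)\to\mathds Z\to\widetilde H_0(N\setminus M_1)\to 0$ established above we conclude $\widetilde H_0(N\setminus M_1)\cong\mathds Z$, i.e. $N\setminus M_1$ has exactly two connected components. Thus $[y]\in\widetilde{\mathrm{Emb}}^o(M,N)$.

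To make this a statement about the quotient topology, I would fix a smooth embedding $x$ with $[x]$ close to $[x_0]$ and transfer the argument to the chart domain $\widetilde{\mathcal U}_x$: there is $\varepsilon>0$ such that every $[y]$ with $\|\widetilde\Phi_x([y])\|_{C^{k,\alpha}}<\varepsilon$ has image inside $\mathcal T$ and is isotopic to $x$ within $\mathcal T$, hence lies in $\widetilde{\mathrm{Emb}}^o(M,N)$; this exhibits an open neighborhood of $[x_0]$ inside $\widetilde{\mathrm{Emb}}^o(M,N)$, and since $[x_0]$ was arbitrary the set is open.

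I expect the main obstacle to be purely a matter of making the ``perturbed image stays in a tubular neighborhood and the intersection number is unchanged'' step precise with the $C^{k,\alpha}$ Banach-manifold topology rather than a naive $C^1$ topology: one must check that the normal-graph representation from \cite{AliasPiccione2010} depends continuously enough on $[y]$ that a uniform $\varepsilon$ works, and that the auxiliary loops $\gamma$ can be chosen once and for all (independent of $y$) so that transversality and disjointness from $\partial\mathcal T$ persist. Once the tubular-neighborhood isotopy is set up, the homological bookkeeping is exactly the exact-sequence computation already carried out in the paragraphs preceding the lemma, so the topological content is essentially complete.
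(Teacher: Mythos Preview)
Your argument is correct and rests on the same homological characterization used in the paper, but the paper organizes the proof slightly differently: it first shows that the set $\mathrm{Emb}^o(M,N)$ of embeddings (upstairs, before quotienting) whose complement has two components is open in $\mathrm{Emb}(M,N)$, simply by noting that orientability of the normal bundle is $C^1$-stable and the homology class of the embedding is $C^0$-stable; then openness of $\widetilde{\mathrm{Emb}}^o(M,N)$ follows immediately because the quotient map $\widetilde\pi$ is a quotient map and $\widetilde\pi^{-1}\big(\widetilde{\mathrm{Emb}}^o(M,N)\big)=\mathrm{Emb}^o(M,N)$. Your version works directly in the quotient charts $\widetilde\Phi_x$ and unpacks ``homology class is stable'' as an explicit isotopy-plus-intersection-number argument; this is more hands-on but buys nothing extra here, whereas the paper's route avoids any discussion of charts or of choosing auxiliary loops $\gamma$. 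Either way the substance is the same: the exact sequence preceding the lemma reduces the two-component condition to homological data that is invariant under small perturbations.
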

\begin{proof}
The openness of ${\mathrm{Emb}}^o(M,N)$ in ${\mathrm{Emb}}(M,N)$ follows readily from the discussion above, observing that the orientability of the normal bundle
of an embedding $x:M\to N$ is stable by $C^1$-perturbations, while the homology class of $x$ is stable
by $C^0$-perturbations. Then, also $\widetilde{\mathrm{Emb}}^o(M,N)$ is open in $\widetilde{\mathrm{Emb}}(M,N)$,
because $\widetilde\pi$ is a quotient map and $\widetilde\pi^{-1}\big(\widetilde{\mathrm{Emb}}_1^o(M,N)\big)={\mathrm{Emb}}_1^o(M,N)$.
\end{proof}

\noindent
If $x:M\to N$ is a transversally oriented $C^1$-embedding, thus $[x]\in\widetilde{\mathrm{Emb}}^o(M,N)$,
then one can write $N\setminus x(M)=\Omega^1_x\bigcup\Omega^2_x$ as the disjoint union of two non empty
connected open subsets of $N$. Using an orientation of the normal bundle $x^\perp$ that depends
continuously by $C^1$-perturbations of $x$, one can define a continuous functions $\mathcal V^1$
and $\mathcal V^2$ in the connected
component of $x$ in $\widetilde{\mathrm{Emb}}_1^o(M,N)$, by setting:
\[\mathcal V^i(x)=\mathrm{volume}(\Omega^i_x)=\int_{\Omega^i_x}\mathrm{vol}_g,\qquad i=1,2.\]
Clearly, $\mathcal V^2(x)=\mathrm{vol}(N)-\mathcal V^1(x)$.
\subsection{A generalized volume functional}
Now, we observe that can write an alternative expression for the volume function
without an explicit reference to the set $\Omega^1_x$. Let us now assume that $M$ and $N$  are
oriented; then, one has a canonical choice of a transverse orientation of $x(M)$.
Let $U$ be an open subset $N$ that contains $x(M)$, and such that\footnote{%
Note that, by Sard theorem, since $\mathrm{dim}(M)<\mathrm{dim}(N)$ and $x$ is of class $C^1$, then
$x(M)\ne N$. Thus, one can take $U=N\setminus\{p\}$, where $p$ is any point of $N$ that does not
belong to $x(M)$.}  $U\ne N$; for instance, $U$ can
be taken to be a small tubular neighborhood of $x(M)$. Then, the volume form $\mathrm{vol}_g$, which is closed,
must be exact when restricted to $U$, because non compact manifolds have vanishing de Rham cohomology
in the highest dimension. This means that there exists an $(n-1)$-form $\eta_U$ on $U$ such that
$\mathrm d\eta_U=\mathrm{vol}_g$ on $U$. Then, one can define:
\[\mathcal V(x)=\int_Mx^*(\eta_U);\]
such a function is immediately seen to be independent of the choice of the primitive $\eta_U$.
By Stokes theorem, if $x(M)$ is the boundary of an open subset $\Omega$ of $N$, then
$\mathcal V(x)$ coincides with the volume of $\Omega$ or of $N\setminus\Omega$ (depending on the orientation of
$M$), and
thus the function $\mathcal V$ is a natural extension of the functions $\mathcal V^1$ and
$\mathcal V^2$ when no assumption is made on the number of connected components of $N\setminus x(M)$.
Note however that, when $N\setminus x(M)$ has only one connected component, the value of the
functional $\mathcal V$ does indeed depend on the choice of the open subset $U$ of $N$ that contains
$x(M)$. It can be observed also that $\mathcal V$ is invariant by right-composition with diffeomorphisms
of $M$, so that it gives a well defined function on a neighborhood of $[x]$ in $\widetilde{\mathrm{Emb}}(M,N)$.
We need a reformulation of some standard results of CMC embeddings (see \cite{BardoCEsc}) in our
abstract variational context:
\begin{prop}\label{thm:varproblCMC}
Let $x:M\to N$ be a transversally oriented smooth embedding, and let $U$ and $\eta$ be as above; consider the local
chart $\widetilde\Phi_x$ of $\widetilde{\mathrm{Emb}}(M,N)$ having domain $\widetilde{\mathcal U}_{x}$.
We will assume here that $k\ge2$.
\begin{itemize}
\item[(a)] The (locally defined) function $\mathcal V_x=\mathcal V\circ\widetilde\Phi_x^{-1}$
on $\widetilde\Phi_x\Big(\widetilde{\mathcal U}_{x}\Big)$ is smooth, and it has no critical
points.
\item[(b)] The critical points of the function $\mathcal A_x=\mathcal A\circ\widetilde\Phi_x^{-1}$
subject to the constraint $\mathcal V_x=\text{const.}$ are smooth sections $u$ of $x^\perp$
such that $\widetilde\Phi_x(u)$ is the class $[y]$ of a smooth embedding $y:M\to N$ having
constant mean curvature.
\item[(c)] If $[x]$ is a critical point of $\mathcal A_x=\mathcal A\circ\widetilde\Phi_x^{-1}$
subject to the constraint $\mathcal V_x=\text{const.}$, then its Lagrange multiplier $\lambda_x$ is equal
to $m\cdot H_x$, where $H_x$ is the (constant) mean curvature of $x$.
\item[(d)] If $[x]$ is a critical point of $\mathcal A_x$ subject to the constraint $\mathcal V_x=\text{const.}$, then identifying\footnote{%
Let $\vec n_x$ be positively orient unit normal field of the embedding $x$. An identification
of $\mathbf\Gamma(x^\perp)$ and $C^{k,\alpha}(M,\R)$ is given by $C^{k,\alpha}(M,\R)\ni f\mapsto f\cdot\vec n_x\in\mathbf\Gamma(x^\perp)$.}
$\mathbf\Gamma(x^\perp)$
with $C^{k,\alpha}(M,\R)$, the space of $C^{k,\alpha}$-functions on $M$,
the second variation $\mathrm d^2\mathcal A_x(x)$
at the point $x$ is the symmetric bilinear form on $C^{k,\alpha}(M,\R)$ corresponding to the
quadratic form:
\[Q_x(f)=\int_M-f\,\Delta f-\big(m\,\mathrm{Ric}_N(\vec n_x)+\Vert S_x\Vert^2\big)\,f^2\;\mathrm{vol}_x,\]
where $\Delta$ is the Laplacian on functions on $M$ relative to the induced metric $x^*(g)$,
$\mathrm{Ric}_N(\vec n_x)$ is the Ricci curvature of $N$ in the direction $\vec n_x$, which is the
positively oriented unit normal field of $x$, $S_x$ is the second fundamental form of $x$ in the direction $\vec n_x$,
$\Vert\cdot\Vert$ is the Hilbert--Schmidt norm of an operator defined by $g$,
and $\mathrm{vol}_x$ is the volume form of the
metric $x^*(g)$.
\end{itemize}
\end{prop}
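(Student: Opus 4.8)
For (a)--(d), the idea is that everything reduces to the classical first and second variation formulas for the area and for the enclosed--volume functionals (see \cite{BardoCEsc}, and \cite{Lawson} for the area), once one knows that $\mathcal A$ and $\mathcal V$ are smooth in the charts $\widetilde\Phi_x$ of $\widetilde{\mathrm{Emb}}(M,N)$ introduced in Subsection~\ref{sub:unparameterizedembeddings}. This smoothness, together with the fact that the isometry orbits are submanifolds in those charts, is the only genuinely non--trivial input, and it is supplied by \cite{AliasPiccione2010} (it is Proposition~\ref{thm:critpointsA} for $\mathcal A$, and the same argument applies verbatim to $\mathcal V$). The plan is: first prove (a), which makes the Lagrange multiplier rule recalled in Subsection~\ref{sub:nondegeneracy} applicable; then deduce (b) and (c) from the first variation of $\mathcal A$ together with elliptic regularity; and finally obtain (d) by computing the second variation at a constrained critical point.

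For (a): writing $\mathcal V(x)=\int_M x^*\eta_U$, the functional $\mathcal V$ on $\mathrm{Emb}(M,N)$ is the composition of the first order nonlinear differential operator $x\mapsto x^*\eta_U$ with the bounded linear map $\omega\mapsto\int_M\omega$, hence is smooth; it is invariant under right composition with diffeomorphisms of $M$, so by \cite{AliasPiccione2010} the induced function $\mathcal V_x=\mathcal V\circ\widetilde\Phi_x^{-1}$ is smooth near $0$. A short computation based on Stokes' theorem shows that, if $u$ corresponds under $\widetilde\Phi_x$ to the normal graph $z$ over $x$ and $f\,\vec n_z$ denotes the normal component of a variation of $z$, then
\[
\mathrm d\mathcal V_x(u)[f]=\int_M f\,\mathrm{vol}_z .
\]
Since $\mathrm{vol}_z$ is a nowhere vanishing volume form, this linear functional never vanishes identically (take $f\equiv 1$), so $\mathcal V_x$ has no critical points; in particular the constraint $\mathcal V_x=\mathrm{const.}$ is a regular level set and the Lagrange multiplier rule of Subsection~\ref{sub:nondegeneracy} is available.

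For (b) and (c): the first variation of the area is $\mathrm d\mathcal A_x(u)[f]=-m\int_M H_z\,f\,\mathrm{vol}_z$, where $H_z$ is the (a priori only $C^{k-2,\alpha}$) mean curvature of $z$. Hence $u$ is a constrained critical point of $\mathcal A_x$ precisely when it is a free critical point of some $\mathcal A_{\lambda_0}=\mathcal A+\lambda_0\mathcal V$, that is, when $-m\int_M H_z f\,\mathrm{vol}_z=-\lambda_0\int_M f\,\mathrm{vol}_z$ for every $f$; this forces $H_z$ to be a constant and $\lambda_0=mH_z$, which (taking $u=0$, i.e.\ $z=x$) gives $\lambda_x=mH_x$ and proves (c). Thus $z$ is a $C^{k,\alpha}$ embedding of constant mean curvature; writing the CMC condition as a quasi--linear second order elliptic equation for the normal graph function $u$ over the smooth embedding $x$, with constant (hence $C^\infty$) right--hand side, Schauder estimates bootstrap $u\in C^{k,\alpha}$ to $u\in C^{k+1,\alpha}$ and, by iteration, to $u\in C^\infty$; therefore $z$ is a smooth CMC embedding and $\widetilde\Phi_x(u)=[z]$, which proves (b).

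For (d): let $[x]$ be a constrained critical point, with Lagrange multiplier $\lambda_x=mH_x$; since $x$ is then a free critical point of $\mathcal A_{\lambda_x}=\mathcal A+\lambda_x\mathcal V$, the relevant second variation $\mathrm d^2\mathcal A_{\lambda_x}(x)$ is independent of the chart and can be evaluated along the geodesic variation $x_t(p)=\exp_{x(p)}\!\big(t\,f(p)\,\vec n_x(p)\big)$ attached to $f\in C^{k,\alpha}(M,\R)$. Three elementary identities for this variation are needed: the normal speed $f_t=\langle\partial_t x_t,\vec n_{x_t}\rangle$ satisfies $\tfrac{\partial f_t}{\partial t}\big|_{t=0}=\langle f\,\vec n_x,-\mathrm{grad}_M f\rangle=0$; the induced volume form satisfies $\tfrac{\partial}{\partial t}\mathrm{vol}_{x_t}\big|_{t=0}=-m\,H_x\,f\,\mathrm{vol}_x$; and the linearized mean curvature is $m\,\tfrac{\partial}{\partial t}H_{x_t}\big|_{t=0}=\Delta f+\big(m\,\mathrm{Ric}_N(\vec n_x)+\Vert S_x\Vert^2\big)f$ (cf.\ \cite{BardoCEsc}). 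Differentiating $\mathrm d\mathcal A(x_t)[\partial_t x_t]=-m\int_M H_{x_t}f_t\,\mathrm{vol}_{x_t}$ and $\mathrm d\mathcal V(x_t)[\partial_t x_t]=\int_M f_t\,\mathrm{vol}_{x_t}$ once more at $t=0$, the equal and opposite multiples of $\int_M f^2\,\mathrm{vol}_x$ produced by the two differentiations cancel in $\mathrm d^2\mathcal A_{\lambda_x}(x)=\mathrm d^2\mathcal A(x)+\lambda_x\,\mathrm d^2\mathcal V(x)$, leaving exactly
\[
\mathrm d^2\mathcal A_{\lambda_x}(x)[f,f]=\int_M\Big(-f\,\Delta f-\big(m\,\mathrm{Ric}_N(\vec n_x)+\Vert S_x\Vert^2\big)f^2\Big)\,\mathrm{vol}_x=Q_x(f)
\]
for all $f\in C^{k,\alpha}(M,\R)$, as claimed. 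Apart from the results of \cite{AliasPiccione2010} (where all the substantive work is concentrated), the only steps requiring real attention are the elliptic bootstrap giving smoothness of the CMC embedding in (b), and keeping track, in (d), of the cancellation that makes $Q_x$ represent the full bilinear form rather than merely its restriction to $\{f:\int_M f\,\mathrm{vol}_x=0\}$.
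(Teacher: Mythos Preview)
Your proof is correct and follows essentially the same approach as the paper: smoothness of $\mathcal V$ via \cite{AliasPiccione2010} (as for $\mathcal A$ in Proposition~\ref{thm:critpointsA}), then reduction of (b)--(d) to the classical first and second variation formulas of \cite{BardoCEsc}. The paper's own proof is terser---it simply cites \cite[Propositions~2.3 and~2.5]{BardoCEsc} for (b)--(d)---whereas you spell out the elliptic bootstrap for smoothness in (b) and the cancellation in the second variation for (d); these elaborations are correct and make the argument more self-contained, but the underlying strategy is the same.
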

\begin{proof}
The smoothness of $\mathcal V$ follows by the same argument used in the proof of the smoothness
of $\mathcal A$ in Proposition~\ref{thm:critpointsA}: $\mathcal V$ is smooth
map on $\mathrm{Emb}(M,N)$ invariant by diffeomorphisms of $M$, as it is in local charts the composition of a nonlinear first order differential operator
in the space of $C^{k,\alpha}$-sections of $x^*(TN)$ and the linear map given by integration on $M$.
All the remaining statements are obtained directly from \cite{BardoCEsc}, observing that the functions
 $\mathcal A_x$ and $\mathcal V_x$ correspond to the functions $A$ and $V$ in \cite{BardoCEsc}.\footnote{%
 In the language of \cite{BardoCEsc}, the volume function denoted by $V$ is associated to \emph{variations} of
 $x$, i.e., to smooth maps $X:\left]-\varepsilon,\varepsilon\right[\times M\to N$, such that for all $t\in\left]-\varepsilon,\varepsilon\right[$,
 the map $X_t:=X(t,\cdot):M\to N$ is an immersion, and such that $X_0=x$. Note that, since $x$ is an embedding,
 for $t$ near $0$ also $X_t$ is an embedding. Associated to such a variation of $x$, it is defined in
 \cite{BardoCEsc} the function $V(t)=\int_{[0,t]\times M}X^*(\mathrm{vol}_g)$, for
 $t\in\left]-\varepsilon,\varepsilon\right[$. Now, using Stoke's Theorem:
 \[V(t)=\int_{[0,t]\times M}X^*(\mathrm d\eta)=\int_{[0,t]\times M}\mathrm dX^*(\eta)=
 \int_M(X_t)^*(\eta)-\int_M(X_0)^*(\eta)=\mathcal V(X_t)-\mathcal V(X_0).\]}
 The differential of $\mathcal V_x$ at $x$ is identified with the
 linear map $C^{k,\alpha}(M,\R)\ni f\mapsto\int_Mf\;\mathrm{vol}_x$, which is not identically zero;
 thus, $x$ is not a critical point of
 $\mathcal V_x$, and  it follows that   $\mathcal V_x$
 has no critical points, proving (a). Parts (b) and (c) follow by similar arguments from \cite[Proposition~2.3]{BardoCEsc}
 or \cite[Proposition~2.7]{BardoC}. Part (d) is \cite[Proposition~2.5]{BardoCEsc}.
\end{proof}
Let us now look at the Fredholmness issues for the CMC variational problem;
in order to comply with axiom (HF-B), \emph{we will henceforth assume that $k=2$}, i.e.,
we will consider the manifold of unparameterized embeddings of class $C^{2,\alpha}$,
with $\alpha\in\left]0,1\right[$.
Given a constant mean curvature embedding $x:M\to N$, let $J:C^{2,\alpha}(x^\perp)\to C^{0,\alpha}(x^\perp)$
be the differential operator:
\begin{equation}\label{eq:Jacobiperator}
Jf=-\Delta f-\big(m\,\mathrm{Ric}_N(\vec n_x)+\Vert S_x\Vert^2\big)\,f.
\end{equation}
From part (d) of Proposition~\ref{thm:varproblCMC}, the second variation of the functional
$\mathcal A_{x}+\lambda_x\mathcal V_{x}$ is the symmetric bilinear form $B_x$ on
$C^{2,\alpha}(x^\perp)$ given by:
\begin{equation}\label{eq:secvarsymmform}
B_x(f_1,f_2)=\int_M (Jf_1)\cdot f_2\;\mathrm{vol}_x.
\end{equation}
\begin{prop}\label{thm:FredhomnessCMC}
The following statements hold:
\begin{itemize}
\item[(a)] the functional $\mathcal A_x+\lambda_x\mathcal V_x$ satisfies the assumption
(HF-A) in a
neighborhood of $[x]$ in $\widetilde{\mathrm{Emb}}(M,N)$;
\item[(b)] the functional $\mathcal A_x+\lambda_x\mathcal V_x$ satisfies the assumption (HF-B) in a
neighborhood of $[x]$ in $\widetilde{\mathrm{Emb}}(M,N)$.
\end{itemize}
\end{prop}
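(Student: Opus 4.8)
The plan is to verify axioms (HF-A) and (HF-B) directly from the explicit description of the second variation provided in Proposition~\ref{thm:varproblCMC}(d), using the local chart $\widetilde\Phi_x$ around the smooth embedding $[x]$ to identify a neighborhood in $\widetilde{\mathrm{Emb}}(M,N)$ with an open set in the Banach space $X=C^{2,\alpha}(M,\mathbb R)$ of H\"older sections of the normal bundle. For part (a), I would take $Y=C^{0,\alpha}(M,\mathbb R)$ and $\mathcal H_0=L^2(M,\mathbb R)$, so that the continuous dense inclusions $X\hookrightarrow Y\hookrightarrow\mathcal H_0$ hold by standard embedding theorems for H\"older and Sobolev--$L^2$ spaces on the compact manifold $M$. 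The gradient map $H(\lambda,u)$ is defined by the first-order differential operator representing the $L^2$-gradient of $\mathcal A_x+\lambda\mathcal V_x$ in the chart; its smoothness in $u$ and its $C^1$-dependence on $(\lambda,u)$ follow from the nonlinear-operator description used in Propositions~\ref{thm:critpointsA} and~\ref{thm:varproblCMC}, together with the results of \cite{AliasPiccione2010}. The key point is that $\frac{\partial H}{\partial x}(\lambda_x,x)$ is exactly (a lower-order perturbation of) the Jacobi operator $J$ of \eqref{eq:Jacobiperator}, i.e.\ $-\Delta$ plus a zeroth-order term; since $-\Delta\colon C^{2,\alpha}\to C^{0,\alpha}$ is an elliptic isomorphism up to finite-dimensional kernel/cokernel and the zeroth-order term is a compact perturbation (as $C^{2,\alpha}\hookrightarrow C^{0,\alpha}$ is compact), the operator $J$ is Fredholm; it is self-adjoint for the $L^2$-pairing, hence of index zero. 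This gives (HF-A).

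For part (b), I would take $\mathcal H_1=H^1(M,\mathbb R)$, which contains $X=C^{2,\alpha}$ continuously and densely. The quadratic form $Q_x$ of Proposition~\ref{thm:varproblCMC}(d) can be integrated by parts to the manifestly $H^1$-bounded form $\int_M\big(\langle\nabla f_1,\nabla f_2\rangle-(m\,\mathrm{Ric}_N(\vec n_x)+\|S_x\|^2)f_1f_2\big)\,\mathrm{vol}_x$, which extends $\mathrm d^2\mathcal A_x(x)$ to a bounded symmetric bilinear form on $\mathcal H_1$. Writing this form as $\langle S_{\lambda_x,x}\cdot,\cdot\rangle_{1}$ with respect to the $H^1$ inner product, the principal part corresponds to a positive isomorphism of $H^1$ (the Dirichlet form, modulo the $L^2$ piece of the norm) while the zeroth-order term, by the compactness of $H^1\hookrightarrow L^2$, induces a compact self-adjoint operator; hence $S_{\lambda_x,x}$ is essentially positive, giving (HF-B1). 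For (HF-B2), elliptic regularity is the crucial tool: any eigenfunction $f$ of $S_{\lambda_x,x}$ solves $-\Delta f=(\sigma\text{-dependent})f$ in the weak sense, so $f\in C^{2,\alpha}=X$ by Schauder estimates; thus every $\sigma$-eigenspace with $\sigma\le\delta$ lies in $T_{x}\mathfrak M\cong X$. Finally (HF-B3) follows because the coefficients $m\,\mathrm{Ric}_N(\vec n_x)+\|S_x\|^2$ of the lower-order term depend continuously (indeed smoothly) on the embedding, hence on $(\lambda,x)$ near $(\lambda_x,x)$, so the operators $S_{\lambda,x}$ vary continuously and the essential-positivity and regularity conclusions persist on a neighborhood.

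The main obstacle I expect is not the spectral theory, which is routine for a Schr\"odinger-type operator on a compact manifold, but rather the \emph{transfer of regularity and smoothness through the chart} $\widetilde\Phi_x$ on the quotient space $\widetilde{\mathrm{Emb}}(M,N)$: one must check that in this chart the area and volume functionals really are given by the composition of a nonlinear first-order differential operator with integration, so that their $L^2$-gradients are first-order operators whose linearizations are the stated second-order elliptic operators, and that the passage to the quotient by $\mathrm{Diff}(M)$ does not spoil this structure. This is precisely the content for which the excerpt defers to \cite{AliasPiccione2010}, so in the proof I would invoke those results and reduce the verification of (HF-A), (HF-B1)--(HF-B3) to the elliptic facts above, citing \cite[Proposition~2.5]{BardoCEsc} for the form of $\mathrm d^2\mathcal A_x(x)$ and standard Schauder theory (e.g.\ \cite{Lawson}) for the regularity of eigenfunctions.
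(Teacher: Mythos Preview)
Your approach is essentially the paper's: for part (b) both arguments integrate by parts to extend $B_x$ to $H^1(M,\R)$, write the resulting operator as identity plus a compact perturbation via the compact inclusion $H^1\hookrightarrow L^2$, and then invoke elliptic regularity (the paper cites \cite{GilTrud}) to place all eigenfunctions in $C^{2,\alpha}$. One slip to fix: the gradient map $H$ is not a first-order operator but a quasi-linear \emph{second}-order elliptic operator---indeed you yourself identify its linearization at $x$ with $J=-\Delta-\big(m\,\mathrm{Ric}_N(\vec n_x)+\Vert S_x\Vert^2\big)$, which is second order, and this is exactly why $H$ lands in $Y=C^{0,\alpha}$ rather than $C^{1,\alpha}$; for part (a) the paper does not argue directly but defers to \cite{Smale}, \cite{Whi}, \cite{Whi2}.
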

\begin{proof}
Part (a) is well known, see for instance \cite[Section~2]{Smale},
\cite[\S 1.4]{Whi}, \cite[Theorem~1.2 and \S~7]{Whi2}.
The gradient map $H$ is defined in an open set of the Banach space $X=C^{2,\alpha}(M,\R)$
and takes values in the Banach space $Y=C^{0,\alpha}(M,\R)$.
The Hilbert space $H$ is the Lebesgue space of square integrable functions on $M$
relatively to the metric defined by $\mathrm{vol}$.
An explicit formula
for the map $H$ is irrelevant here; it is given by a quasi-linear second order elliptic differential
operator, see \cite{Whi, Whi2} for details.

For part (b), consider the embedding
$C^{2,\alpha}(M,\R)\hookrightarrow H^1(M,\R)$, where $H^1$ denotes the Sobolev space given by
the Hilbert space completion of $C^{2,\alpha}(M,\R)$ with respect to the pre-Hilbert space inner product
\[\langle f_1,f_2\rangle_{H^1}=\int_M\big[f_1f_2+\nabla f_1\cdot\nabla f_2\big]\;\mathrm{vol}_x\] (gradients and inner products are relative
to the Riemannian metric $x^*(g)$ on $M$).
The symmetric bilinear form $B_x=\mathrm d^2\big[\mathcal A_{x}+\lambda_x\mathcal V_{x}\big](x)$
has a bounded extension to a symmetric bilinear form on $H^1(M,\R)$, which is represented by an essentially positive
self-adjoint operator on $H^1(M,\R)$. Namely, partial integration in \eqref{eq:secvarsymmform} gives;
\begin{multline}\label{eq:Bx}
B_x(f_1,f_2)=\int_m\nabla f_1\cdot\nabla f_2-\big(m\,\mathrm{Ric}_N(\vec n_x)
+\Vert S_x\Vert^2\big)\,f_1\,f_2\;\mathrm{vol}_x\\=\langle f_1,f_2\rangle_{H^1}-
\int_M\big(m\,\mathrm{Ric}_N(\vec n_x)
+\Vert S_x\Vert^2+1\big)\,f_1\,f_2\;\mathrm{vol}_x,\end{multline}
from which it follows that $B_x$ has a bounded extension to $H^1$.
Now, such extension is represented by the operator $\mathrm I+K$, where $\mathrm I$ is the identity,
and:
\[\langle Kf_1,f_2\rangle_{H^1}= -\int_M\big(m\,\mathrm{Ric}_N(\vec n_x)+\Vert S_x\Vert^2+1\big)
\,f_1\,f_2\;\mathrm{vol}_x.\]
The bilinear form on the right-hand side of this equality is continuous with respect
to the $L^2$-topology, and this implies that $K$ is a compact operator, because the
inclusion $H^1\hookrightarrow L^2$ is compact. Hence, $S=\mathrm I+K$ is essentially positive.

A function $f\in H^1(M,\R)$ is an eigenfunction of $S$ if and only if it is a weak solution
of the linear elliptic equation $Jf=\mu f$ for some $\mu\in\R$. By standard elliptic regularity
(see for instance \cite[Chapter 8]{GilTrud}), every such $f$ is of class $C^\infty$, and
thus all the eigenspaces of $S$ are contained in $T_{[x]}\widetilde{\mathrm{Emb}}(M,N)$.
This concludes the proof.
\end{proof}
\end{section}
\begin{section}{Constant mean curvature Clifford tori in the sphere}
\label{sec:Clifford}
We will now finalize the proof of our main theorem, by applying the abstract results
discussed in the first part of the paper. Let $0<j<m$ be fixed integers,
and let us look at the case of CMC embeddings of the product
of sphere $M=\mathbb S^j\times\mathbb S^{m-j}$ into the sphere $N=\mathbb S^{m+1}$
endowed with the round metric $g$.

The Banach manifold $\mathfrak M$ is (an open subset of) $\widetilde{\mathrm{Emb}}(M,N)$,
here $\alpha$ is any real number in $\left]0,1\right[$. Consider the group $G=\mathrm{SO}(m+2)$, which
is the connected component of the identity of $\mathrm O(m+2)=\mathrm{Iso}(N,g)$,
acting by left-composition on $\mathfrak M$. For a given $x\in\widetilde{\mathrm{Emb}}(M,N)$,
the stabilizer $G_x$ is given by the set of isometries $\psi\in\mathrm{SO}(m+1)$ that preserve
the subset $x(M)$, i.e., such that $\psi\big(x(M)\big)=x(M)$.

Consider the path $\left]0,1\right[\ni r\mapsto\big[x_r^{m,j}\big]\in\widetilde{\mathrm{Emb}}(M,N)$
of CMC Clifford tori defined in \eqref{eq:defClifford}. If $g$ denotes the round metric of radius $1$
on the sphere $\mathbb S^{m+1}$, the pull-back $\big(x_r^{m,j}\big)^*(g)$ is the product metric
on $\mathbb S^j\times\mathbb S^{m-j}$ which is the round metric of radius $r$ on the factor $\mathbb S^j$
and the round metric of radius $\sqrt{1-r^2}$ on the factor $\mathbb S^{m-j}$. We will
denote this Riemannian manifold by $\mathbb S^j(r)\times\mathbb S^{m-j}\big(\sqrt{1-r^2}\big)$.
\begin{prop}\label{thm:conststabilizer}
The connected component of the identity of the subgroup of $\mathrm{SO}(m+2)$ that stabilizes
$\big[x_r^{m,j}\big]$ in $\widetilde{\mathrm{Emb}}(M,N)$ is $\mathrm{SO}(j+1)\times\mathrm{SO}(m-j+1)$
(embedded diagonally in $\mathrm{SO}(m+2)$).
\end{prop}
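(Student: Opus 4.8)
The plan is to identify the stabilizer as the group of isometries of $\mathbb S^{m+1}$ that carry the image $M_0=x_r^{m,j}(M)$ to itself, and to show that its identity component is exactly the diagonally embedded $\mathrm{SO}(j+1)\times\mathrm{SO}(m-j+1)$. Write the ambient sphere as the unit sphere in $\R^{m+2}=\R^{j+1}\oplus\R^{m-j+1}$, so that, by \eqref{eq:defClifford}, the image $M_0$ is the set of points $(u,v)$ with $\vert u\vert=r$ and $\vert v\vert=\sqrt{1-r^2}$. First I would observe that any block-diagonal isometry $(B,C)\in\mathrm{SO}(j+1)\times\mathrm{SO}(m-j+1)\subset\mathrm{SO}(m+2)$ clearly preserves both $\vert u\vert$ and $\vert v\vert$, hence preserves $M_0$; this gives one inclusion. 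It also shows this subgroup acts on $M_0$, and in fact acts transitively (the two factors act transitively on the two sphere factors), so its orbit on $[x_r^{m,j}]$ is a single point, consistent with $M_0$ being fixed as a set.

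Next I would prove the reverse inclusion at the level of identity components. The key is that the embedding $x_r^{m,j}$ has two distinct principal curvatures (it is not totally umbilic, as $0<r<1$ and $j<m$), and the two principal distributions have ranks $j$ and $m-j$. An isometry $\psi$ of $\mathbb S^{m+1}$ preserving $M_0$ induces an isometry of the induced metric $\mathbb S^j(r)\times\mathbb S^{m-j}(\sqrt{1-r^2})$; moreover $\psi$ preserves the second fundamental form of $M_0$ up to sign, hence permutes the two principal distributions. Since these have in general different ranks (when $j\neq m-j$) or, when $j=m-j$, the identity component cannot swap the two factors, the identity component of the stabilizer must preserve each factor $\mathbb S^j(r)$ and $\mathbb S^{m-j}(\sqrt{1-r^2})$ of $M_0$ individually. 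It therefore restricts to an isometry in $\mathrm{Iso}(\mathbb S^j(r))\times\mathrm{Iso}(\mathbb S^{m-j}(\sqrt{1-r^2}))$, whose identity component is $\mathrm{SO}(j+1)\times\mathrm{SO}(m-j+1)$.

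The final step is to promote an isometry of $M_0$ that respects the product splitting to an honest ambient isometry of the block-diagonal form. For this I would use the linear-algebraic fact that $M_0$ spans $\R^{m+2}$ affinely and, more precisely, that the two linear subspaces $\R^{j+1}=\mathrm{span}\{u:(u,v)\in M_0\}$ and $\R^{m-j+1}=\mathrm{span}\{v:(u,v)\in M_0\}$ are intrinsically recovered from $M_0$: a tangent-space/osculating argument shows that the first factor $\mathbb S^j(r)$ spans $\R^{j+1}$ and lies in the affine subspace $\{\vert u\vert=r\}$, so a rotation preserving $M_0$ and fixing (componentwise) the splitting must act linearly and orthogonally on each of $\R^{j+1}$ and $\R^{m-j+1}$ separately; the radii $r$ and $\sqrt{1-r^2}$ being preserved forces the two blocks to lie in $\mathrm O(j+1)$ and $\mathrm O(m-j+1)$, and taking identity components gives $\mathrm{SO}(j+1)\times\mathrm{SO}(m-j+1)$. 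I expect the main technical obstacle to be making rigorous the claim that an ambient isometry preserving $M_0$ must be block-diagonal (as opposed to merely inducing a product isometry of the abstract Riemannian product): one must use that the two sphere factors sit in orthogonal \emph{linear} subspaces of $\R^{m+2}$ and that an isometry of $\mathbb S^{m+1}$ preserving a full-dimensional round subsphere of a linear subspace must preserve that linear subspace — an elementary but slightly fiddly rigidity statement for subspheres that I would isolate as a preliminary lemma.
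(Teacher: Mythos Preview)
Your proposal is correct in spirit and the geometric argument can be made to work, but the paper takes a very different and much shorter route. The paper simply observes three facts: (i) the identity component of the stabilizer obviously contains $\mathrm{SO}(j+1)\times\mathrm{SO}(m-j+1)$; (ii) it is a proper subgroup of $\mathrm{SO}(m+2)$ since $0<j<m$; and (iii) $\mathrm{SO}(j+1)\times\mathrm{SO}(m-j+1)$ is a \emph{maximal} connected subgroup of $\mathrm{SO}(m+2)$, by Dynkin's classification of maximal subgroups of the classical groups. Maximality immediately forces equality, and the proof is three lines.

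Your approach, by contrast, is elementary and self-contained: you analyse the principal distributions of the embedding and argue that an ambient isometry preserving $M_0$ must respect the product splitting and hence be block-diagonal. This avoids invoking the Dynkin classification, at the cost of the ``slightly fiddly'' rigidity lemma you flag. That lemma is genuinely straightforward once you note that the tangent spaces to the leaves $\mathbb S^j(r)\times\{q\}$ span $\R^{j+1}\times\{0\}$ linearly, and that a linear isometry preserving this tangent distribution (hence its linear span) must preserve $\R^{j+1}$; so there is no real obstacle. The trade-off is clear: the paper's argument is slick but imports a nontrivial structure theorem, whereas yours is longer but entirely hands-on and would transfer more readily to ambient spaces where no such maximal-subgroup classification is available.
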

\begin{proof}
Such a connected component
is compact, and it obviously contains the product $\mathrm{SO}(j+1)\times\mathrm{SO}(m-j+1)$. On the other hand,
it is not equal to $\mathrm{SO}(m+2)$ if $j$ and $m-j$ are positive. But
$H=\mathrm{SO}(j+1)\times\mathrm{SO}(m-j+1)$ is a \emph{maximal} connected subgroup of $\mathrm{SO}(m+2)$
(see for instance \cite{Dyn}), and thus it must be equal to the connected component of the identity of the
stabilizer of $\big[x_r^{m,j}\big]$.
\end{proof}
\begin{cor}\label{thm:dimensionorbit}
The $\mathrm{SO}(m+2)$-orbit of the class $\big[x_r^{m,j}\big]$ in
$\widetilde{\mathrm{Emb}}(\mathbb S^j\times\mathbb S^{m-j},\mathbb S^m)$ is diffeomorphic
to the Grassmannian of all $(j+1)$-dimensional oriented subspaces of $\mathds R^{m+2}$, whose
dimension is equal to $m+1+j(m-j)$.
\end{cor}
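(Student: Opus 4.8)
The statement to prove is Corollary~\ref{thm:dimensionorbit}: the $\mathrm{SO}(m+2)$-orbit of $[x_r^{m,j}]$ is diffeomorphic to the oriented Grassmannian $\widetilde{\mathrm{Gr}}_{j+1}(\R^{m+2})$, of dimension $m+1+j(m-j)$.

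The plan is to identify the orbit with a homogeneous space $G/G_{[x_r^{m,j}]}$ and then recognize that homogeneous space. First I would recall that, since $G=\mathrm{SO}(m+2)$ is a compact Lie group acting smoothly (in local charts) on $\widetilde{\mathrm{Emb}}(M,N)$, the orbit of $[x_r^{m,j}]$ is an embedded submanifold diffeomorphic to the quotient $G/G_{[x_r^{m,j}]}$, where $G_{[x_r^{m,j}]}$ is the full stabilizer. Now the orbit is connected (as $G$ is connected), so it equals $G/G_{[x_r^{m,j}]}^0 \big/ \big(G_{[x_r^{m,j}]}/G_{[x_r^{m,j}]}^0\big)$; since we only want a diffeomorphism type and, more to the point, the dimension, the relevant computation uses $\dim G_{[x_r^{m,j}]} = \dim G_{[x_r^{m,j}]}^0$, which by Proposition~\ref{thm:conststabilizer} equals $\dim\big(\mathrm{SO}(j+1)\times\mathrm{SO}(m-j+1)\big)$. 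Then
\[
\dim\mathcal O = \dim\mathrm{SO}(m+2) - \dim\mathrm{SO}(j+1) - \dim\mathrm{SO}(m-j+1),
\]
and using $\dim\mathrm{SO}(n)=\binom n2$ one computes $\binom{m+2}{2}-\binom{j+1}{2}-\binom{m-j+1}{2} = m+1+j(m-j)$, which is the asserted dimension (and also the known dimension of $\widetilde{\mathrm{Gr}}_{j+1}(\R^{m+2})$).

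To upgrade this from a dimension count to the diffeomorphism statement, I would argue as follows. The image $x_r^{m,j}(M)$ is the product of spheres sitting in $\R^{j+1}\times\R^{m-j+1}=\R^{m+2}$, and it is invariant precisely under those $\psi\in\mathrm O(m+2)$ preserving the orthogonal splitting $\R^{m+2}=\R^{j+1}\oplus\R^{m-j+1}$ up to possibly exchanging the two factors (the exchange is only possible when $j+1=m-j+1$, i.e. $m=2j$, and even then it does not preserve the radii unless $r=\sqrt{1-r^2}$). Intersecting with $\mathrm{SO}(m+2)$, the stabilizer is, at the level of connected components, $S\big(\mathrm O(j+1)\times\mathrm O(m-j+1)\big)$ or a related group, but in any case its identity component is $\mathrm{SO}(j+1)\times\mathrm{SO}(m-j+1)$ as already established. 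The assignment $\psi\mapsto \psi(\R^{j+1})$ sends the orbit $\mathcal O$ onto the set of $(j+1)$-planes of $\R^{m+2}$; to get the \emph{oriented} Grassmannian rather than the unoriented one, note that $\mathrm{SO}(m+2)$ acts transitively on oriented $(j+1)$-planes with stabilizer exactly $\mathrm{SO}(j+1)\times\mathrm{SO}(m-j+1)$, and that an element $[y]$ of the orbit is determined by the (oriented) plane spanned by the small-radius sphere factor together with the value of $r$ — but $r$ is fixed along the orbit, so $[y]\mapsto$ (that oriented plane) is a well-defined $G$-equivariant bijection $\mathcal O\to\widetilde{\mathrm{Gr}}_{j+1}(\R^{m+2})$, hence (being equivariant between homogeneous spaces with the same isotropy) a diffeomorphism.

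The main obstacle is the bookkeeping about orientations and the exchange of factors: one must check that the stabilizer of the \emph{unparameterized} embedding $[x_r^{m,j}]$, as opposed to the subset $x_r^{m,j}(M)$ or the parameterized map, is genuinely $\mathrm{SO}(j+1)\times\mathrm{SO}(m-j+1)$ at the level of identity components and does not pick up an extra $\Z_2$ that would turn the oriented Grassmannian into the unoriented one — but this is exactly what Proposition~\ref{thm:conststabilizer} guarantees, so the corollary follows by combining that proposition with the transitive action of $\mathrm{SO}(m+2)$ on $\widetilde{\mathrm{Gr}}_{j+1}(\R^{m+2})$ and the standard dimension formula $\dim\widetilde{\mathrm{Gr}}_{j+1}(\R^{m+2}) = (j+1)(m+2-(j+1)) = (j+1)(m-j+1) = m+1+j(m-j)$.
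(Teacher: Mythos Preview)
Your approach is essentially the paper's: identify the orbit with $\mathrm{SO}(m+2)$ modulo the stabilizer and compute the dimension as
\[
\tfrac12(m+2)(m+1)-\tfrac12 j(j+1)-\tfrac12(m-j)(m-j+1)=m+1+j(m-j).
\]
You are in fact more careful than the paper, which simply asserts the quotient description in one line.

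One small point worth flagging, since you raise it explicitly: Proposition~\ref{thm:conststabilizer} only determines the \emph{identity component} of the stabilizer, so it does not by itself exclude the extra $\mathds Z_2$ you worry about. In fact every element of $S\big(\mathrm O(j+1)\times\mathrm O(m-j+1)\big)\subset\mathrm{SO}(m+2)$ preserves the image $x_r^{m,j}(M)$ and hence stabilizes the class $[x_r^{m,j}]$, so the full stabilizer is disconnected and the orbit is, strictly speaking, the \emph{unoriented} Grassmannian. This discrepancy is present in the paper's statement as well and is harmless for the sequel: only the dimension $m+1+j(m-j)$ is used (in Proposition~\ref{thm:deg-Morseindex}), and that is unaffected.
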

\begin{proof}
The orbit of $\big[x_r^{m,j}\big]$ is diffeomorphic to the quotient
\[\mathrm{SO}(m+2)/\big[\mathrm{SO}(j+1)\times\mathrm{SO}(m-j+1)\big],\] whose dimension is
$\frac12(m+2)(m+1)-\frac12j(j+1)-\frac12(m-j)(m-j+1)=m+1+j(m-j)$.
\end{proof}
The path of CMC Clifford tori is never tangent to the orbits:
\begin{prop}\label{thm:pathtransverse}
The curve $r\mapsto\big[x_r^{m,j}\big]$ is not tangent to
the orbit $\mathrm{SO}(m+2)\big[x_{r_0}^{m,j}\big]$ for any value of $r_0$.
\end{prop}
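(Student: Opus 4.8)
The plan is to work in the local chart $\widetilde\Phi_{x^{m,j}_{r_0}}$ of $\widetilde{\mathrm{Emb}}(M,N)$ around $\big[x^{m,j}_{r_0}\big]$ (Subsection~\ref{sub:unparameterizedembeddings}), identifying $T_{[x^{m,j}_{r_0}]}\widetilde{\mathrm{Emb}}(M,N)$ with $C^{2,\alpha}(M,\R)$ via the unit normal field $\vec n_{r_0}$ of $x^{m,j}_{r_0}$, and to check that the velocity of the path $r\mapsto\big[x^{m,j}_r\big]$ at $r=r_0$ does not lie in $T_{[x^{m,j}_{r_0}]}\mathcal O$, where $\mathcal O=\mathrm{SO}(m+2)\cdot\big[x^{m,j}_{r_0}\big]$. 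Read in this chart, the derivative at $r=r_0$ of $r\mapsto\big[x^{m,j}_r\big]$ is the pointwise \emph{normal} component of $\partial_r x^{m,j}_r\big|_{r=r_0}$ along $x^{m,j}_{r_0}$ --- the tangential part of a variation is an infinitesimal reparametrization, killed in the quotient $\widetilde{\mathrm{Emb}}(M,N)$, and this is built into the chart construction of \cite{AliasPiccione2010}. Writing points of $M=\mathbb S^j\times\mathbb S^{m-j}$ as pairs $(p,q)$ with $p\in\mathbb S^j\subset\R^{j+1}$ and $q\in\mathbb S^{m-j}\subset\R^{m-j+1}$, an elementary computation gives $\vec n_{r_0}(p,q)=\big(\sqrt{1-r_0^2}\,p,\,-r_0\,q\big)$ (up to sign) and $\partial_r x^{m,j}_r\big|_{r=r_0}(p,q)=\big(p,\,-\tfrac{r_0}{\sqrt{1-r_0^2}}\,q\big)$; hence, under the identification with $C^{2,\alpha}(M,\R)$, this velocity is the \emph{constant} function $\big\langle\partial_r x^{m,j}_r,\vec n_{r_0}\big\rangle\equiv\tfrac{1}{\sqrt{1-r_0^2}}\neq0$.

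Next I would use that $\mathcal O$ is a constrained critical orbit. Since $\mathcal A$ and $\mathcal V$ are $\mathrm{SO}(m+2)$-invariant, so is $\mathcal A_{\lambda_{r_0}}=\mathcal A+\lambda_{r_0}\mathcal V$, and the whole orbit $\mathcal O$ consists of critical points of $\mathcal A_{\lambda_{r_0}}$ with the same critical value; therefore $T_{[x^{m,j}_{r_0}]}\mathcal O$ is contained in the kernel of the second variation $\mathrm d^2\mathcal A_{\lambda_{r_0}}\big(x^{m,j}_{r_0}\big)$ (cf.\ Subsection~\ref{sub:nondegeneracy}). By Proposition~\ref{thm:varproblCMC}(d) this second variation is the bilinear form associated with $f\mapsto\int_M(Jf)\,f\;\mathrm{vol}$, where $Jf=-\Delta f-\big(m\,\mathrm{Ric}_N(\vec n_{r_0})+\Vert S\Vert^2\big)f$; by density of $C^{2,\alpha}$ in $L^2$ its kernel is exactly $\{f:Jf=0\}$, so $T_{[x^{m,j}_{r_0}]}\mathcal O\subseteq\{f\in C^{2,\alpha}(M,\R):Jf=0\}$.

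To finish, note that for the Clifford torus in $N=\mathbb S^{m+1}$ (round metric, sectional curvature $1$) one has $\mathrm{Ric}_N(\vec n_{r_0})=m$, while $\Vert S\Vert^2\geq0$ is constant along the torus; hence $J$ applied to the nonzero constant function $1$ yields $J(1)=-\big(m^2+\Vert S\Vert^2\big)\neq0$. Thus no nonzero constant satisfies $Jf=0$, so the velocity found in the first step cannot belong to $T_{[x^{m,j}_{r_0}]}\mathcal O$; this is precisely the assertion that $r\mapsto\big[x^{m,j}_r\big]$ is not tangent to the orbit at $r_0$.

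The step I expect to need the most care is the identification, in the first paragraph, of the chart-velocity with the normal projection of the pointwise velocity field; granting the chart machinery of \cite{AliasPiccione2010}, everything else is a short explicit computation. As an alternative to the kernel argument, one may instead compute $T_{[x^{m,j}_{r_0}]}\mathcal O$ directly as the space of normal components along $x^{m,j}_{r_0}(M)$ of the Killing fields $z\mapsto Az$, $A\in\mathfrak{so}(m+2)$: decomposing $A$ into its diagonal blocks $A_{11}\in\mathfrak{so}(j+1)$, $A_{22}\in\mathfrak{so}(m-j+1)$ and the off-diagonal block $A_{12}$, the diagonal terms drop out by skew-symmetry and one is left with the functions $(p,q)\mapsto\langle A_{12}\,q,\,p\rangle$, $A_{12}$ arbitrary; each such function integrates to zero over $M$ because $\big(x^{m,j}_{r_0}\big)^*(g)$ is a product metric and the coordinate functions of $\mathbb S^j$ have vanishing integral, whereas the nonzero constant of the first step does not --- which is moreover consistent with the dimension of $\mathcal O$ computed in Corollary~\ref{thm:dimensionorbit}.
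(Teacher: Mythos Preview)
Your proof is correct, and the route you take is genuinely different from the paper's. The paper argues directly with Killing fields: it identifies $T_{[x^{m,j}_{r_0}]}\mathcal O$ with the normal projections $P^\perp(\mathfrak h\circ x^{m,j}_{r_0})$, $\mathfrak h\in\mathfrak{so}(m+2)$, and then checks by hand that for block-decomposed $\mathfrak h=\begin{pmatrix}A&B\\-B^t&C\end{pmatrix}$ the condition ``normal component equals the constant $\tfrac{1}{\sqrt{1-r_0^2}}$'' reduces to $Bq\cdot p=\tfrac{1}{\sqrt{1-r_0^2}}$ for all $(p,q)$, which is impossible. Your main argument instead exploits the general fact that $T_{[x^{m,j}_{r_0}]}\mathcal O\subset\ker J$ (orbit directions are Jacobi fields) and observes that the constant velocity function $\tfrac{1}{\sqrt{1-r_0^2}}$ is not in $\ker J$ because $J(1)=-(m\,\mathrm{Ric}_N(\vec n_{r_0})+\Vert S\Vert^2)<0$. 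This is a cleaner, more conceptual argument; it avoids any matrix bookkeeping and would work verbatim for any family of CMC embeddings whose normal velocity is a nonzero constant. The paper's approach, by contrast, is completely elementary and does not invoke the second variation at all. Your ``alternative'' at the end is essentially the paper's computation, finished off by an integration argument rather than the paper's pointwise impossibility; both finishes are fine.

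One small remark: with the paper's normalization (Proposition~\ref{thm:deg-Morseindex}) one has $\mathrm{Ric}_N(\vec n_{r_0})=1$, not $m$, so $J(1)=-\big(\tfrac{j}{r_0^2}+\tfrac{m-j}{1-r_0^2}\big)$ rather than $-(m^2+\Vert S\Vert^2)$. This is only a convention mismatch and does not affect your argument, since the potential is strictly positive either way.
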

\begin{proof}
Fix $r_0$ and denote by $P^\perp:\mathbf\Gamma\big((x_{r_0}^{m,j})^*(T\mathbb S^{m+1})\big)\to
\mathbf\Gamma\big((x_{r_0}^{m,j})^\perp\big)$
the linear map that carries a vector field $V$ along $x^{m,j}_{r_0}$ to the normal field $V^\perp$
obtained by pointwise orthogonal projection onto the normal space of $x^{m,j}_{r_0}$.
The tangent space to the orbit of $\big[x_r^{m,j}\big]$ is
identified (via the local chart $\widetilde\Phi_{x_{r_0}^{m,j}}$) with the space:
\[\Big\{P^\perp\big(\mathfrak h\circ x_{r_0}^{m,j}\big):\mathfrak h\in\mathfrak{so}(m+2)\Big\},\]
where $\mathfrak{so}(m+2)$ is the Lie algebra of $\mathrm{SO}(m+2)$ (consisting of anti-symmetric
matrices), while the tangent vector to the curve of CMC Clifford tori is:
\[P^\perp\left(\frac{\mathrm d}{\mathrm dr}\Big\vert_{r=r_0}x_r^{m,j}\right),\]
where clearly:
\[\frac{\mathrm d}{\mathrm dr}\Big\vert_{r=r_0}x_r^{m,j}(p,q)=\left(p,-\frac{r_0}{\sqrt{1-r_0^2}}\,q\right),\qquad
\forall\,p\in\mathbb S^j,\ q\in\mathbb S^{m-j}.\]
The thesis is equivalent to proving that there exists no $\mathfrak h\in\mathfrak{so}(m+2)$ such
that the vector field:
\begin{equation}\label{eq:condtangent}
\mathfrak h\circ x_{r_0}^{m,j}-\frac{\mathrm d}{\mathrm dr}\Big\vert_{r=r_0}x_r^{m,j}
\end{equation}
is everywhere tangent to the Clifford torus $x^{m,j}_{r_0}$. Writing:
\[\mathfrak h=\begin{pmatrix}A&B\\ -B^t&C\end{pmatrix},\]
where $A$ is an anti-symmetric $(j+1)\times(j+1)$ matrix, $B$ is a $(j+1)\times(m-j+1)$ matrix and
$C$ is an anti-symmetric $(m-j+1)\times(m-j+1)$ matrix, then \eqref{eq:condtangent} is everywhere
tangent to the Clifford torus if and only if:
\[Bq\cdot p=\frac1{\sqrt{1-r_0^2}},\quad\forall\,p\in\mathbb S^j,\ q\in\mathbb S^{m-j};\]
clearly, such condition is not satisfied by any $B$, which proves the thesis.
\end{proof}
Let us now study the nondegeneracy and the Morse index of the CMC Clifford tori.
Consider the sequences $\big(\beta_i(j)\big)_{i\ge3}$ and $\big(\gamma_l(j,m)\big)_{l\ge3}$
defined by:
\begin{equation}\label{eq:betagamma}
\beta_i(j)=(i-2)(j+i-1),\qquad \gamma_l(j,m)=(l-2)(m-j+l-1).
\end{equation}
It is easy to see that they are strictly increasing, and that:
\[\lim_{i\to\infty}\beta_i(j)=\lim_{l\to\infty}\gamma_l(j,m)=+\infty.\]
Define two sequences $\big(r_i^{m,j}\big)_{i\ge3}$ and $\big(s_l^{m,j}\big)_{l\ge3}$ by:
\begin{equation}\label{eq:jumpsofindex}
r_i^{m,j}=\sqrt{\frac{\beta_i(j)}{m-j+\beta_i(j)}}, \qquad s_l^{m,j}=\sqrt{\frac j{j+\gamma_l(j,m)}};
\end{equation}
the sequence $r_i^{m,j}$ is contained in $\left[\sqrt{\frac{j+2}{m+2}},1\right[$, it is strictly increasing, and $\lim\limits_{i\to\infty}r_i^{m,j}=1$, while $s_l^{m,j}$ is contained in $\left]0,\sqrt{\frac j{m+2}}\;\right]$, it is
strictly decreasing, and $\lim\limits_{l\to\infty}s_l^{m,j}=0$.
\begin{prop}\label{thm:deg-Morseindex}
The following statements hold:
\begin{itemize}
\item[(a)] the strong Morse index of $\big[x_r^{m,j}\big]$ is equal to the weak Morse index plus $1$;
\smallskip

\item[(b)] the $\mathrm{SO}(m+1)$-critical orbits of $\big[x_r^{m,j}\big]$ is degenerate
if and only if either $r=r_i^{m,j}$ for some $i\ge3$ or $r=s_l^{m,j}$ for some $l\ge3$;
\smallskip

\item[(c)] each degeneracy instant
of the path of critical points $\left]0,1\right[\ni r\mapsto\big[x_r^{m,j}\big]$
determines a jump of the Morse index.

\end{itemize}
More precisely, at every degeneracy instant $r_i^{m,j}$, the jump of the Morse index is in absolute value equal to:
\[\begin{pmatrix}j+i-1\cr i-1\end{pmatrix}-\begin{pmatrix}j+1-3\cr i-3\end{pmatrix},\]
while at every degeneracy instant $s_l^{m,j}$, the jump of the Morse index is in absolute value equal to:
\[\begin{pmatrix}m-j+l-1\cr l-1\end{pmatrix}-\begin{pmatrix}m-j+l-3\cr l-3\end{pmatrix}.\]
\end{prop}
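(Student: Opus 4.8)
The plan is to reduce everything to a spectral analysis of the Jacobi operator $J$ in \eqref{eq:Jacobiperator} on the product manifold $\mathbb S^j(r)\times\mathbb S^{m-j}\big(\sqrt{1-r^2}\big)$, using the fact that for the Clifford torus $x_r^{m,j}$ all the geometric ingredients are completely explicit. First I would compute the second fundamental form $S_x$ of $x_r^{m,j}$: since the two factors are totally geodesic inside the product and the torus is isoparametric, $S_x$ has exactly two distinct principal curvatures, constant on each factor, with values $-\sqrt{1-r^2}/r$ (with multiplicity $j$) and $r/\sqrt{1-r^2}$ (with multiplicity $m-j$), so that $\Vert S_x\Vert^2 = j\frac{1-r^2}{r^2} + (m-j)\frac{r^2}{1-r^2}$. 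Likewise $\mathrm{Ric}_N(\vec n_x) = m$ since $N=\mathbb S^{m+1}$ has constant curvature $1$. Hence $J = -\Delta - (m^2 + \Vert S_x\Vert^2)$, where $\Delta$ is the Laplacian of the product metric.

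Next I would diagonalize $J$ by separation of variables. The eigenfunctions of $\Delta$ on $\mathbb S^j(r)\times\mathbb S^{m-j}\big(\sqrt{1-r^2}\big)$ are products $\varphi\otimes\psi$ of spherical harmonics, with eigenvalues $\frac{a(a+j-1)}{r^2} + \frac{b(b+m-j-1)}{1-r^2}$, $a,b\ge 0$, and the multiplicity of the pair $(a,b)$ is the product of the dimensions of the spaces of degree-$a$ and degree-$b$ spherical harmonics on $\mathbb S^j$ and $\mathbb S^{m-j}$ respectively. Therefore the spectrum of $J$ consists of the numbers
\[
\Lambda_{a,b}(r) = \frac{a(a+j-1)}{r^2} + \frac{b(b+m-j-1)}{1-r^2} - m^2 - j\tfrac{1-r^2}{r^2} - (m-j)\tfrac{r^2}{1-r^2},
\]
with the same multiplicities. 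Now I would read off the three assertions. For (a): the constant function (the pair $(0,0)$) gives $\Lambda_{0,0}(r) = -m^2 - j\frac{1-r^2}{r^2} - (m-j)\frac{r^2}{1-r^2} < 0$ for all $r\in\left]0,1\right[$ — this is the well-known fact that CMC Clifford tori are strongly unstable; this single negative direction is exactly the one lost when passing to the twisted (zero-mean) problem, and by \eqref{eq:wsindices} the strong index is then the weak index plus $1$, provided one checks the constant is not in the kernel, which is immediate from $\Lambda_{0,0}<0$.

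For (b) and (c) I would examine when $\Lambda_{a,b}(r)=0$ for some $(a,b)\ne(0,0)$. Using $r^2 = t$, $1-r^2 = 1-t$ and clearing denominators, $\Lambda_{a,b}=0$ becomes a linear equation in $t$ (the $1/t$ and $1/(1-t)$ terms combine after multiplying through by $t(1-t)$, and the genuinely quadratic pieces cancel because of the $-m^2$ and the two curvature terms — this cancellation is the small miracle that makes the computation clean). Solving it, one finds: for $b=0$ it has a root in $\left]0,1\right[$ precisely when $a=i\ge 3$, and that root is $r_i^{m,j}$ from \eqref{eq:jumpsofindex}; symmetrically, for $a=0$ one gets $b=l\ge 3$ giving $s_l^{m,j}$; and for $a,b\ge 1$ one checks $\Lambda_{a,b}>0$ throughout $\left]0,1\right[$ (e.g.\ because each of the "mixed" terms dominates). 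This proves (b). For (c), one computes $\frac{\mathrm d}{\mathrm dr}\Lambda_{i,0}(r)\neq 0$ at $r=r_i^{m,j}$ (the linear-in-$t$ form makes the derivative manifestly nonzero), so $\Lambda_{i,0}$ changes sign transversally; the total change in the Morse index at $r_i^{m,j}$ is then the sum of the multiplicities of all pairs $(a,0)$ whose eigenvalue crosses zero there. Since $\dim(\text{degree-}a \text{ harmonics on }\mathbb S^j) = \binom{j+a}{a} - \binom{j+a-2}{a-2}$, and at $r=r_i^{m,j}$ it is exactly $(a,b)=(i,0)$ that degenerates (the argument that no other $(a',0)$ with $a'\ne i$ vanishes at the same $r$ uses monotonicity of the roots $r_{a}^{m,j}$ in $a$), the jump equals $\binom{j+i-1}{i-1} - \binom{j+i-3}{i-3}$; the $s_l$ case is identical with $j$ replaced by $m-j$. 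Combining with Proposition~\ref{thm:MorseIndexConstant} (constancy of the strong Morse index away from degeneracies) completes the proof.

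\emph{The main obstacle} I anticipate is not any single hard step but the bookkeeping in (c): one must be sure that at the instant $r_i^{m,j}$ no \emph{other} eigenvalue $\Lambda_{a,b}$ simultaneously vanishes, so that the jump is given by a single multiplicity and not a sum over coincidentally-degenerating modes. This requires showing that the roots $\{r_i^{m,j}\}_i$ and $\{s_l^{m,j}\}_l$ are pairwise distinct and that the mixed eigenvalues $\Lambda_{a,b}$ with $a,b\ge 1$ never vanish — both follow from the explicit formulas and the monotonicity of $\beta_i(j)$ and $\gamma_l(j,m)$ noted after \eqref{eq:betagamma}, but care is needed to handle all the edge cases uniformly in $j$ and $m$.
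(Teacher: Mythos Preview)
Your approach---separating variables to diagonalize $J$ on the product of round spheres---is exactly the paper's, and parts (a) and (c) are essentially as in the original. But there are two problems, one arithmetic and one conceptual.

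\medskip

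\textbf{Arithmetic.} The paper's formula \eqref{eq:Jacobiperator} uses a normalized Ricci curvature, so that for the unit sphere $\mathrm{Ric}_N(\vec n_x)=1$, not $m$. Equivalently, the potential is
\[
m\,\mathrm{Ric}_N(\vec n_x)+\Vert S_x\Vert^2=m+j\tfrac{1-r^2}{r^2}+(m-j)\tfrac{r^2}{1-r^2}=\tfrac{j}{r^2}+\tfrac{m-j}{1-r^2},
\]
so your $m^2$ should be $m$. With the wrong constant the spectrum shifts by $m(m-1)$ and none of the subsequent identifications (in particular the ``small miracle'' cancellation you allude to) work.

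\medskip

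\textbf{Conceptual gap in (b).} Degeneracy of the critical \emph{orbit} (Subsection~\ref{sub:nondegeneracy}) means that $\ker J$ is strictly larger than the tangent space to the $\mathrm{SO}(m+2)$-orbit, not merely that $0$ lies in the spectrum. With the correct potential one finds
\[
\Lambda_{a,b}(r)=\frac{(a-1)(a+j)}{r^2}+\frac{(b-1)(b+m-j)}{1-r^2},
\]
so $\Lambda_{1,1}\equiv 0$ for \emph{every} $r\in\left]0,1\right[$, with multiplicity $(j+1)(m-j+1)$. Your assertion that ``for $a,b\ge1$ one checks $\Lambda_{a,b}>0$'' is therefore false precisely at $(a,b)=(1,1)$, and this is the heart of the matter: you must invoke Corollary~\ref{thm:dimensionorbit} to see that $(j+1)(m-j+1)=m+1+j(m-j)$ equals the orbit dimension, so this piece of the kernel is exactly accounted for by the orbit and contributes nothing to degeneracy. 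Only then does the remaining analysis (zeros coming from $(a,0)$ or $(0,b)$) establish (b). Without this identification your argument does not distinguish the Clifford family from a genuinely degenerate situation, and your ``main obstacle'' paragraph overlooks the one coincidence that actually does occur for all $r$.
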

\begin{proof}
The proof is based on a direct analysis of the spectrum of the Jacobi operator $J$ in \eqref{eq:Jacobiperator}, following \cite{LuisAldirOscar}. The Ricci curvature of the sphere $\mathbb S^{m+1}$ is constant equal to $1$; also the
norm of the second fundamental form of the CMC Clifford embedding $x_r^{m,j}$ is constant, and the following formula holds:
\[m\,\mathrm{Ric}_N(\vec n_{x_r^{m,j}})+\big\Vert S_{x_r^{m,j}}\big\Vert^2\equiv\frac j{r^2}+\frac{m-j}{1-r^2}.\]
Thus, the Jacobi operator takes the form:
\[J=-\Delta^{j,m-j}_r-\left(\frac j{r^2}+\frac{m-j}{1-r^2}\right),\]
where $\Delta^{j,m-j}_r$ is the Laplacian of the Riemannian manifold
$\mathbb S^j(r)\times\mathbb S^{m-j}\big(\sqrt{1-r^2}\big)$. Thus, the spectrum of
$J$ is given by the spectrum of $-\Delta^{j,m-j}$ shifted by $-\left(\frac j{r^2}+\frac{m-j}{1-r^2}\right)$;
moreover, $J$ and $\Delta^{j,m-j}$ have the same eigenfunctions.
Degeneracy and strong Morse index of the critical point $\big[x_r^{m,j}\big]$ is studied
by counting the number of zero and negative eigenvalues of $J$ in the space of (smooth) real functions
on $\mathbb S^j\times\mathbb S^{m-j}$. The weak Morse index is given by the index of the
restriction of the quadratic form $\langle Jf,f\rangle_{L^2}$ to the space of functions $f$ on
$\mathbb S^j\times\mathbb S^{m-j}$ having vanishing integral.

An eigenvalue of $J$ has the form:
\begin{equation}\label{eq:eigenJ}
\sigma_i+\rho_l-\left(\frac j{r^2}+\frac{m-j}{1-r^2}\right),
\end{equation}
where $\sigma_i$ is an eigenvalue of the Laplacian $\Delta^j_r$ of the sphere $\mathbb S^j(r)$
and $\rho_l$ is an eigenvalue of the Laplacian $\Delta^{m-j}_{\sqrt{1-r^2}}$ of the sphere $\mathbb S^{m-j}\big(\sqrt{1-r^2}\big)$.
Moreover, if $M_{\sigma}$ and $M_{\rho}$ are the multiplicity of the eigenvalues $\sigma$ and $\rho$ respectively
of $\Delta^j_r$ and of $\Delta^{m-j}_{\sqrt{1-r^2}}$, then the multiplicity of \eqref{eq:eigenJ} is given by
$\sum M_\sigma M_\rho$, where the sum is taken over all eigenvalues $\sigma$ and $\rho$ such that
$\sigma+\rho=\sigma_i+\rho_l$.

The $\sigma_i$'s and the $\rho_l$'s form two strictly increasing unbounded sequences, and they have
multiplicities denoted respectively by $M_{\sigma_i}$ and $M_{\rho_l}$ given by the following formulas:
\begin{equation}\label{eq:sigmarho}
\sigma_i=\frac{(i-1)(j+i-2)}{r^2},\qquad\rho_l=\frac{(l-1)(m-j+l-2)}{1-r^2}
\end{equation}
\begin{align}\label{eq:multiplicities}
&&M_{\sigma_1}=1,\quad M_{\sigma_2}=j+1,\quad M_{\sigma_i}=\begin{pmatrix}j+i-1\cr i-1\end{pmatrix}
-\begin{pmatrix}j+i-3\cr i-3\end{pmatrix}\ \text{for $i\ge3$},
\\ \notag
&&
M_{\rho_1}=1,\quad M_{\rho_2}=m-j+1,\quad M_{\rho_l}=\begin{pmatrix}m-j+l-1\cr l-1\end{pmatrix}
-\begin{pmatrix}m-j+l-3\cr l-3\end{pmatrix}\\ \notag &&\text{for $l\ge3$}.
\end{align}
The negative eigenvalue $-\left(\frac j{r^2}+\frac{m-j}{1-r^2}\right)=\sigma_1+\rho_1-\left(\frac j{r^2}+\frac{m-j}{1-r^2}\right)$
has multiplicity $1$, and its eigenspace consists of constant functions. This gives a contribution of $1$ to
the strong Morse index of $\big[x_r^{m,j}\big]$, but not to the weak Morse index.
On the other hand, all the other eigenspaces of $J$ are $L^2$-orthogonal to the first eigenspace constisting
of constant functions on $\mathbb S^j\times\mathbb S^{m-j}$, i.e., non constant eigenfunctions of
$J$ have vanishing integral. This implies that the strong Morse index $\big[x_r^{m,j}\big]$
is equal to the weak Morse index plus $1$, proving (a).

As to the degeneracy, let us observe that $0$ is always an eigenvalue of $J$, because:
\[\sigma_2+\rho_2-\left(\frac j{r^2}+\frac{m-j}{1-r^2}\right)=0,\]
for all $r\in\left]0,1\right[$.
Assuming that there is no other pair $(i,l)\ne(2,2)$ such that $\sigma_i+\rho_l-
\left(\frac j{r^2}+\frac{m-j}{1-r^2}\right)=0$, then the multiplicity of zero as an eigenvalue of $J$ is given by:
\[M_{\sigma_2}M_{\rho_2}=(j+1)(m-j+1)=m+1+j(m-j).\]
By Corollary~\ref{thm:dimensionorbit}, this multiplicity equals the dimension of the critical orbit
of $\big[x_r^{m,j}\big]$. Thus, nondegeneracy occurs exactly when there is no other $0$ in the
spectrum of $J$. Since the sequences $\sigma_i$ and $\rho_l$ are strictly increasing, it follows that
other $0$'s in the spectrum of $J$ must be of the form \[\sigma_1+\rho_l-\left(\frac j{r^2}+\frac{m-j}{1-r^2}\right)\]
for some $l\ge3$, or of the form \[\sigma_i+\rho_1-\left(\frac j{r^2}+\frac{m-j}{1-r^2}\right)\] for some
$i\ge3$. Recalling \eqref{eq:betagamma} and \eqref{eq:sigmarho}, we compute explicitly:
\begin{multline*}
\sigma_1+\rho_l-\left(\frac j{r^2}+\frac{m-j}{1-r^2}\right)=
\frac{(l-1)(m-j+l-2)}{1-r^2}-\left(\frac j{r^2}+\frac{m-j}{1-r^2}\right)\\
=\frac{r^2\big[j+\gamma_l(j,m)\big]-j}{r^2(1-r^2)},
\end{multline*}
\begin{multline*}
\sigma_i+\rho_1-\left(\frac j{r^2}+\frac{m-j}{1-r^2}\right)=
\frac{(i-1)(j+i-2)}{r^2}-\left(\frac j{r^2}+\frac{m-j}{1-r^2}\right)\\
=\frac{\beta_i(j)-r^2\big[m-j+\beta_i(j)\big]}{r^2(1-r^2)}.
\end{multline*}
The first expression above vanishes for $r\in\left]0,1\right[$ exactly when $r=s_l^{m,j}$, while the second expression vanishes for $r\in\left]0,1\right[$ exactly when $r=r_i^{m,j}$, see \eqref{eq:jumpsofindex}. This proves part (b).

For the proof of part (c), we will show that the jump of the Morse index at the instants $s_l^{m,j}$ is equal to the multiplicity $M_{\rho_l}$,
while the jump of the Morse index at the instants $r_i^{m,j}$ is equal to $M_{\sigma_i}$,
see \eqref{eq:multiplicities}. This amounts to proving that the zeros of the functions:
\[\theta_l(r)=\frac{r^2\big[j+\gamma_l(j,m)\big]-j}{r^2(1-r^2)}\]
and
\[\kappa_i(r)=\frac{\beta_i(j)-r^2\big[m-j+\beta_i(j)\big]}{r^2(1-r^2)}\]
in the interval $\left]0,1\right[$ are points where the functions change their
sign. An elementary analysis of these functions show that $\theta_l$ is strictly increasing
and $\kappa_i$ is strictly decreasing on $\left]0,1\right[$ for all $l,i\ge3$.
Thus, every degeneracy instant
determines a jump of the Morse index equal to its multiplicity.
\end{proof}
The constant mean curvature torus $x^{m,j}_r$ has mean curvature $H^{m,j}_r$ given by (see \cite{LuisAldirOscar}):
\[H^{m,j}_r=\frac{mr^2-j}{mr\sqrt{1-r^2}};\]
thus, by Proposition~\ref{thm:varproblCMC}, part (c), the corresponding Lagrange multiplier $\lambda^{m,j}_r$ is:
\[\lambda_r^{m,j}=\frac{mr^2-j}{r\sqrt{1-r^2}}.\]
The derivative $\frac{\mathrm d}{\mathrm dr}\lambda_r^{m,j}$ is positive:
\begin{equation}\label{eq:positivederivative}
\frac{\mathrm d}{\mathrm dr}\lambda_r^{m,j}=\frac{(m-2j)r^2+j}{r^2(1-r^2)^{3/2}}>0,\end{equation}
because $(m-2j)r^2+j\ge\min\{j,m-j\}>0$ for $r\in\left]0,1\right[$.

\begin{rem}\label{rm:minimalClifford}
Note that the value of $r=\sqrt{\frac jm}\in\left]\sqrt{\frac j{m+2}},\sqrt{\frac{j+2}{m+2}}\right[$,
corresponding to the \emph{minimal} Clifford torus, is not a degeneracy instant for
the family of CMC Clifford tori.
\end{rem}

Finally, everything is now ready for:
\begin{proof}[Proof of Theorem]
The existence of CMC embeddings of the product $\mathbb S^j\times\mathbb S^{m-j}$ into
$\mathbb S^{m+1}$ that accumulate at the CMC Clifford tori $x^{m,j}_{r_i}$ and $x^{m,j}_{s_i}$
and that are not congruent to any member of the CMC Clifford family corresponds exactly to
the occurrence of orbit bifurcation at $r=r_i$ and at $r=s_i$ for the CMC constrained variational
problem on the manifold of unparameterized embeddings of $\mathbb S^j\times\mathbb S^{m-j}$ into
$\mathbb S^{m+1}$.
The existence result is obtained by applying Theorem~\ref{thm:maincriticalorbitbifurcation}, whose
assumptions are satisfied, as follows.
The topological and differential structure for the manifold $\mathfrak M=\widetilde{\mathrm{Emb}}(M,N)$
the area and the volume functional $\mathcal A$ and $\mathcal V$
required in assumptions~(A1)---(A5) for the CMC variational problem
has been discussed in Section \ref{sec:varproblem}. The path
of classes of CMC Clifford tori $r\mapsto\big[x^{m,j}_r\big]$ satisfies axioms (B1) and (B2);
see \eqref{eq:positivederivative} for the condition $\lambda_r'>0$.

The statements in assumption (HF-A) and (HF-B) are established respectively in part (a) and (b) of
Proposition~\ref{thm:FredhomnessCMC}. The continuity assumption (HF-B3) is deduced immediately from
\eqref{eq:Bx}.

The statement of assumption (C1) is proved in Proposition~\ref{thm:conststabilizer}.
As to assumption (C2), this follows from Proposition~\ref{thm:pathtransverse}.

The hypotheses (D1) and (D2) hold by Proposition~\ref{thm:deg-Morseindex}.

Hence, by Theorem~\ref{thm:maincriticalorbitbifurcation} there is orbit bifurcation at the instants
$r=r_i$ and $s=s_i$, $i\in\mathds N$.

As to the local rigidity of the CMC Clifford family for all other values of $r$, this follows readily
as an application of Proposition~\ref{thm:bifimpliesdegen} to the above setup, using the fact
that, by Proposition~\ref{thm:deg-Morseindex}, for all values of $r$ that do not belong to the sequences
$r_i$ and $s_i$, the critical orbit $\mathrm{SO}(m+2)\big[x^{m,j}_r\big]$ is nondegenerate.
This concludes the proof.
\end{proof}

\end{section}

\end{document}